\newtheorem{proposition}{Proposition}[section]
\newtheorem{lemma}[proposition]{Lemma}
\newtheorem{corollary}[proposition]{Corollary}
\newtheorem{theorem}[proposition]{Theorem}
\theoremstyle{definition}
\newtheorem{definition}[proposition]{Definition}
\theoremstyle{remark}
\newtheorem{remark}[proposition]{Remark}
\newcommand{\thlabel}[1]{\label{th:#1}}
\newcommand{\thref}[1]{Theorem~\ref{th:#1}}
\newcommand{\selabel}[1]{\label{se:#1}}
\newcommand{\seref}[1]{Section~\ref{se:#1}}
\newcommand{\lelabel}[1]{\label{le:#1}}
\newcommand{\leref}[1]{Lemma~\ref{le:#1}}
\newcommand{\prlabel}[1]{\label{pr:#1}}
\newcommand{\prref}[1]{Proposition~\ref{pr:#1}}
\newcommand{\colabel}[1]{\label{co:#1}}
\newcommand{\coref}[1]{Corollary~\ref{co:#1}}
\newcommand{\relabel}[1]{\label{re:#1}}
\newcommand{\delabel}[1]{\label{de:#1}}
\newcommand{\deref}[1]{Definition~\ref{de:#1}}
\def\a{\alpha}
\def\b{\beta}
\def\d{\delta}
\def\D{\Delta}
\def\ep{\varepsilon}
\def\g{\gamma}
\def\l{\lambda}
\def\ol{\overline}
\def\op{\oplus}
\def\ot{\otimes}
\def\oo{\infty}
\def\ra{\rightarrow}
\def\s{\sigma}
\def\ss{\subseteq}
\def\ti{\times}
\def\<{\leqslant}
\def\>{\geqslant}
\date{}
\begin{document}
\title[Representations of Hopf-Ore extensions of group algebras]{Representations of Hopf-Ore extensions of group algebras and pointed
Hopf algebras of rank one}

\author{Zhen Wang}
\address{Department of Fundamental Sciences, Yancheng Institute
of Technology, Yancheng 224051, China\\
Department of Mathematics, Shanghai Jiaotong University, Shanghai 200240, China}
\email{wangzhen118@gmail.com}

\author{Lan You}
\address{Department of Fundamental Sciences, Yancheng Institute
of Technology, Yancheng 224051, China}
\email{youlan0125@sina.com}

\author[Hui-Xiang Chen]{Hui-Xiang Chen$^*$}
\thanks{$^*$Corresponding author}
\address{School of Mathematical Science, Yangzhou University,
Yangzhou 225002, China}
\email{hxchen@yzu.edu.cn}

\subjclass[2010]{16G30, 16T99}
\keywords{pointed Hopf algebra, Hopf-Ore extension, weight module, simple module, indecomposable module}

\begin{abstract}
In this paper, we study the representation theory of Hopf-Ore extensions of group algebras
and pointed Hopf algebras of rank one over an arbitrary field $k$.
Let $H=kG(\chi, a,\d)$ be a Hopf-Ore extension of $kG$ and $H'$ a rank one quotient Hopf algebra of $H$,
where $k$ is a field, $G$ is a group, $a$ is a central
element of $G$ and $\chi$ is a $k$-valued character for $G$ with $\chi(a)\neq 1$.
We first show that the simple weight modules over $H$ and $H'$ are finite dimensional.
Then we describe the structures of all simple weight modules over $H$ and $H'$, and classify them.
We also consider the decomposition of the tensor product of two simple weight modules over $H'$
into the direct sum of indecomposable modules.
Furthermore, we describe the structures of finite dimensional indecomposable weight modules over
$H$ and $H'$, and classify them. Finally, when $\chi(a)$ is a primitive $n$-th root of unity for some $n\>2$,
we determine all finite dimensional indecomposable projective objects in the category of weight modules
over $H'$.
\end{abstract}
\maketitle

\section*{Introduction}
The construction and classification of Hopf algebras play an
important role in the theory of Hopf algebras. During the last few
years several classification results for pointed Hopf algebras were
obtained based on the theory of Nichols algebras \cite{AndSch98,
AndSch02, AndSch10}. Beattie, D$\check{a}$sc$\check{a}$lescu and Gr$\ddot{u}$nenfelder
constructed a large class of pointed Hopf algebras by Ore extension in \cite{BeaDasGrun} .
Panov studied Ore extensions in the class of Hopf algebras,
which enable one to describe the Hopf-Ore extensions for the group algebras,
for the algebras $U({\mathfrak g})$ and $U_q({\mathfrak g})$, and for the
quantum $``ax+b"$ group in \cite{Pa}. Krop and Radford defined the rank as a measure
of complexity for Hopf algebras \cite{KR}. They classified
all finite dimensional pointed Hopf algebras of rank one over an algebraically
field $k$ of characteristic $0$.
Scherotzke classified such Hopf algebras
for the case of char$(k)=p>0$ in \cite{Sc}. It was shown in \cite{KR, Sc}
that a finite dimensional pointed Hopf algebra over an algebraically closed field
is isomorphic to a quotient of a Hopf-Ore extension of its coradical.
On the other hand, the representation theory of pointed Hopf algebras got many achievements
during past several years \cite{AndBea, Chen00, Chen02, Cib, Igl09, Lorenz, ZhChen}.
One of the main tools in the abelian coradical case is the consideration of
Verma type modules and the corresponding highest weight modules. Using a reductive program,
Andruskiewitsch, Radford and Schneider \cite{AndRadSch} investigated the representation theory
of a large class of pointed Hopf algebras in the abelian case, extending results
of Lusztig and others. For the non abelian case, some results were obtained by Iglesias
\cite{Igl09}, and recently by Pogorelsky and Vay \cite{PoVay}.

In this paper, we study the representation theory of Hopf-Ore extensions of group algebras
and pointed Hopf algebras of rank one.
The paper is organized as follows. In \seref{1}, we present
some basic definitions and facts about
Hopf-Ore extensions.
In \seref{2}, for later use, we summarize some results about the Hopf-Ore extensions of group algebras
and finite dimensional pointed Hopf algebras of rank one from the references \cite{KR, Pa, Sc}.
By a little modification of the arguments of \cite{KR, Sc}, we generalize these results
to the general case without any restrictions for the base field and the dimensions of Hopf algebras.
We compute the rank of Hopf-Ore extension $kG[x;\tau, \d]=kG(\chi,a,\d)$
of any group algebra $kG$, where $\tau$ is an automorphism of $kG$,
$\d$ is a $\tau$-derivation of $kG$, $\chi$ is a $k$-valued character for $G$
and $a\in Z(G)$, the center of $G$. It is shown that any (finite or infinite dimensional) pointed Hopf algebra
of rank one over an arbitrary field $k$ is isomorphic to a quotient of
Hopf-Ore extension of its coradical.
In \seref{3}, we study the representation theory of the Hopf algebra
$H=kG(\chi, a,\d)$, the Hopf-Ore extension of $kG$, and its quotient Hopf algebra
$H'=H/I$ of rank one, where $\chi(a)\neq 1$.
We show that any simple weight module over $H$ (or $H'$) is finite dimensional.
Then all simple weight modules over $H$ (or $H'$) are described and classified.
Finally, we describe the decomposition of the tensor product of two
simple weight modules over $H'$ into a direct sum of indecomposable modules.
In \seref{4}, we continue the study of \seref{3}, and investigate the finite dimensional indecomposable
weight modules over $H$ (or $H'$). We describe the structures of all finite dimensional indecomposable
weight modules over $H$ (or $H'$), and classify them. We also give
a class of infinite dimensional indecomposable projective objects in the category
of weight modules over $H$. Finally, when $\chi(a)$ is a primitive $n$-th root of unity for some $n\>2$,
we determine all finite dimensional indecomposable projective objects in the category of weight modules
over $H'$.

\section{Preliminaries}\selabel{1}

Throughout, we work over a field $k$. Unless otherwise stated, all algebras, coalgebras,
Hopf algebras and modules are vector spaces over $k$. All linear maps mean $k$-linear maps,
dim and $\otimes$ mean dim$_k$ and $\otimes_k$, respectively.
Our reference for basic concepts and notations about Hopf algebras is \cite{Mo}.
In particular, for a Hopf algebra, we will use $\ep$, $\D$ and $S$ to denote the counit,
comultiplication and antipode, respectively. We use Sweedler's notations for the comultiplication
and coaction. Let $k^{\ti}=k\setminus\{0\}$.
For a group $G$, let $\widehat{G}$ denote the set of characters of $G$ over $k$,
and let $Z(G)$ denote the center of $G$.

Let us recall the concept of the rank of a Hopf algebra defined in \cite{KR}.

\begin{definition}\delabel{1.1}
Let $H$ be a Hopf algebra, $H_0\subseteq H_1\subseteq H_2\subseteq \cdots$
the coradical filtration of $H$.
Assume that $H_0$ is a Hopf subalgebra. If $H$ is generated by $H_1$ as an algebra and
${\rm dim}_k(k\ot _{H_0} H_1)=n+1$, then we say that $H$ is a Hopf algebra of rank $n$.
\end{definition}

Note that each $H_i$ is a free $H_0$-module in the case of \deref{1.1}, $i\>0$.

A coalgebra $C$ is {\it graded} if there exist subspaces $\{C_{(n)}\}_{n\>0}$
of $C$ such that $C=\oplus_{n\>0}C_{(n)}$
and $\D (C_{(n)})\ss\sum_{i=0}^n C_{(i)}\ot C_{(n-i)}$ for all
$n\>0$ and $\ep(C_{(n)})=0$ for all $n>0$.
If this is the case, then $C\ot C$ is also a graded coalgebra with grading
$(C\ot C)_{(n)}=\sum_{i=0}^n C_{(i)}\ot C_{(n-i)}$, $n\>0$. Moreover,
the comultiplication $\D: C\ra C\ot C$ of $C$ is a graded map.

For a coalgebra $C$, let $G(C)$ denote the set of all group-like elements
in $C$. For $g, h\in G(C)$, $c\in C$ is
called $(g,h)$-{\it primitive} if $\D(c)=c\ot g+h\ot c$. The set of all
$(g,h)$-primitives is denoted by $P_{g,h}(C)$, which is a subspace of $C$.
If $H$ is a Hopf algebra, then $G(H)$ is a group.

Let $A$ be a $k$-algebra. Let $\tau $ be an algebra endomorphism of
$A$ and $\d$ a $\tau$-derivation of $A$.
The {\it Ore extension} $A[y;\tau,\d]$ of the algebra $A$ is an algebra
generated by the variable $y$ and the algebra
$A$ with the relation $ya=\tau (a)y+\d(a)$ for all $a\in A$.
$A[y;\tau,\d]$ is a free left $A$-module with the $A$-basis $\{y^j\mid j\>0\}$
(see \cite{kas, Pa}).

\begin{definition}\delabel{1.2}(\cite[Definition 1.0]{Pa})
Assume that $A$ and $A[y; \tau, \d]$ are Hopf algebras. The Hopf algebra
$A[y; \tau, \d]$ is called a Hopf-Ore extension of $A$ if
$\D(y)=y\ot r_1+r_2\ot y$ for some $r_1$, $r_2\in A$ and
$A$ is a Hopf subalgebra of $A[y; \tau, \d]$.
\end{definition}

\begin{remark}\relabel{1.3}
In \deref{1.2}, both $r_1$ and $r_2$ are group-like elements \cite{Pa}.
Replacing the generating element $y$ by $r_2^{-1}y$, one may assume that
the element $y$ satisfies the relation
\begin{equation*}
    \D(y)=y\ot r+1\ot y
\end{equation*}
for some $r\in G(A)$. Here we make some
modification for $\D(y)$, which is assumed to be $ \D(y)=y\ot 1 +r\ot y$ in \cite{Pa}.
Hence there are also some corresponding changes in the following \leref{1.4} and related results
from the original ones in \cite{Pa}. Note that $\ep(y)=0$ and
$S(y)=-yr^{-1}$ in this case, where $S$ is the antipode of $A[y;\tau,\d]$.
\end{remark}

\begin{lemma}\lelabel{1.4}(\cite[Theorem 1.3]{Pa})
Assume that $A$ and $A[y;\tau,\d]$ are Hopf algebras. Then
$A[y;\tau,\d]$ is a Hopf-Ore extension of $A$ if and only if
the following conditions are satisfied:\\
{\rm (a)} there is a character $\chi:A\ra k$ such that
$\tau (a)=\sum a_1\chi(a_2)$ for all $a\in A$;\\
{\rm (b)} there is an $r\in G(A)$ such that $\sum a_1\chi(a_2)=\sum \chi(a_1) ad_r(a_2)$ for all $a\in A$,
where $ad_r(a)=rar^{-1}$;\\
{\rm (c)} $\D \d (a)=\sum \d(a_1)\ot ra_2+\sum a_1\ot \d(a_2)$
for all $a\in A$, where $r$ is given in (b).
\end{lemma}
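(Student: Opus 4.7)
The plan is to apply the comultiplication to the defining Ore relation $ya=\tau(a)y+\d(a)$ and extract conditions (a), (b), (c) by reading off coefficients in a convenient basis; for the converse, to run the same computation backwards.

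For the forward direction, suppose $A[y;\tau,\d]$ is a Hopf-Ore extension. By \reref{1.3} I may assume $\D(y)=y\ot r+1\ot y$ for some $r\in A$. Coassociativity applied to $\D(y)$ forces $\D(r)=r\ot r$, so $r\in G(A)$, and $(\mathrm{id}\ot\ep)\D(y)=y$ forces $\ep(r)=1$ (and $\ep(y)=0$). Since $A[y;\tau,\d]$ is free as a left $A$-module on $\{y^j\}_{j\>0}$, the tensor square $A[y;\tau,\d]\ot A[y;\tau,\d]$ is free as a left $A\ot A$-module on $\{y^i\ot y^j\}_{i,j\>0}$. I will compute both sides of $\D(y)\D(a)=\D(\tau(a))\D(y)+\D(\d(a))$ in this basis, using the Ore relation to commute each $y$ past $A$ on the left-hand side, and compare the coefficients of $1\ot y$, $y\ot 1$ and $1\ot 1$ separately.

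Concretely, the $1\ot y$-coefficient yields $\D(\tau(a))=\sum a_1\ot\tau(a_2)$; applying $\mathrm{id}\ot\ep$ and setting $\chi:=\ep\circ\tau$ gives condition (a), and $\chi$ is a character because it is the composition of algebra maps. The $y\ot 1$-coefficient gives $\sum\tau(a_1)\ot ra_2=\D(\tau(a))(1\ot r)$; applying $\ep\ot\mathrm{id}$ and using (a) reduces this to $\sum\chi(a_1)ra_2=\tau(a)r$, which rearranges to condition (b). The $1\ot 1$-coefficient is exactly condition (c).

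For the converse, assume (a)--(c) and define $\D(y)=y\ot r+1\ot y$, $\ep(y)=0$ and $S(y)=-yr^{-1}$, extending (anti)multiplicatively. The only nontrivial check is that these extensions respect the Ore relation. The verification $\D(ya)=\D(\tau(a)y+\d(a))$ is the forward calculation run in reverse, with (a) handling the $1\ot y$-part, (a) together with (b) handling the $y\ot 1$-part, and (c) handling the constant part. For $\ep$, applying $\ep\ot\ep$ to (c) yields $\ep(\d(a))=0$, so $\ep$ descends to the Ore extension; the corresponding check for $S$ then follows from the Hopf axioms once $\D$ and $\ep$ are known to be well-defined.

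The main obstacle will be bookkeeping rather than conceptual: every mixed expression must be rewritten in the left $A\ot A$-basis $\{y^i\ot y^j\}$ before coefficients can be compared, and one must keep Sweedler indices straight when $\d$ is applied to the tensor factors of $\D(a)$. Once this basis-comparison framework is in place, conditions (a)--(c) emerge as the three independent components of a single equation, and the converse is that same equation read the other way.
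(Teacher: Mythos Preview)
The paper does not supply its own proof of this lemma: it is quoted from Panov \cite[Theorem~1.3]{Pa} and stated without argument. So there is no in-paper proof to compare against.

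Your approach is the natural one and is essentially Panov's: expand $\D(y)\D(a)=\D(\tau(a))\D(y)+\D(\d(a))$ and read off the three conditions from the coefficients of $1\ot y$, $y\ot 1$, and $1\ot 1$ in the left $A\ot A$-basis $\{y^i\ot y^j\}$. Your coefficient extraction is correct: the $1\ot y$-component gives $\D(\tau(a))=\sum a_1\ot\tau(a_2)$, whence (a) with $\chi=\ep\circ\tau$; the $y\ot 1$-component gives $\sum\tau(a_1)\ot ra_2=\D(\tau(a))(1\ot r)$, whence (b) after applying $\ep\ot\mathrm{id}$; and the $1\ot 1$-component is literally (c).

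One remark on the converse. Your claim that ``the corresponding check for $S$ then follows from the Hopf axioms once $\D$ and $\ep$ are known to be well-defined'' is true but deserves a sentence of justification: it is not automatic that a bialgebra admits an antipode. The cleanest route is to observe that the filtration $F_n=\bigoplus_{i\le n}Ay^i$ is a coalgebra filtration with $F_0=A$ a Hopf algebra, and then invoke the standard fact that a filtered bialgebra whose bottom piece is a Hopf algebra is itself a Hopf algebra (the antipode is built inductively up the filtration). Alternatively, one can verify directly that the anti-multiplicative extension of $S|_A$ and $S(y)=-yr^{-1}$ kills the Ore relation $ya-\tau(a)y-\d(a)$, but this computation is genuinely messy and uses all three of (a), (b), (c); it is not a one-liner.
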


Let $\d\in{\rm End}_k(A)$. $\d$ is called an $r$-{\it coderivation} if
$\D\d(a)=\sum \d(a_1)\ot ra_2+\sum a_1\ot \d(a_2)$, $a\in A$.
$\d$ is said to be a $\langle\chi, r\rangle$-{\it derivation}
if $\d$ is a $\tau $-derivation and an $r$-coderivation,
where $\chi$ is a character of $A$ determined by $\tau$ as in \leref{1.4}(a).
Denote the Hopf-Ore extension $A[y;\tau,\d]$ by $A(\chi,r,\d)$,
where $\chi:A\ra k$ is a character,
$r$ is a group-like element of $A$ as in \leref{1.4}, and $\d$ is a
$\langle \chi, r \rangle$-derivation \cite{Pa}.

Let $G$ be a group, $\chi\in\widehat{G}$. A linear map $\a: kG\ra k$ is called a $1$-{\it cocycle}
of $G$ with respect to $\chi$ if
$\a(gh)=\a(g)+\chi (g)\a(h)$ for all $g$, $h\in G$.
Let $Z_{\chi}^1(G)$ denote the set of all $1$-cocycles of $G$ with respect to $\chi$.

\begin{lemma}\lelabel{1.5}(\cite[Proposition 2.2]{Pa})
Every Hopf-Ore extension of $kG$ is of the form $kG(\chi,r,\d)$,
where $\chi\in\widehat{G}$, $r\in Z(G)$, and $\d(a)=\sum a_1(1-r)\a(a_2)$
for some $\a\in Z_{\chi}^1(G)$.
\end{lemma}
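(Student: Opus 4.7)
The plan is to invoke \leref{1.4} and translate its three abstract conditions into concrete combinatorial data on the group algebra $kG$. By that lemma, any Hopf-Ore extension of $kG$ has the form $kG(\chi, r, \d)$ for some algebra character $\chi: kG \ra k$, some $r \in G(kG) = G$, and some $\langle \chi, r \rangle$-derivation $\d$; since algebra characters of $kG$ correspond bijectively to group characters $G \ra k^\times$, we get $\chi \in \widehat{G}$ immediately. To force $r \in Z(G)$, I would apply condition (b) of \leref{1.4} to a single group element $g$: using $\D g = g \ot g$, the identity collapses to $g\chi(g) = \chi(g)\, rgr^{-1}$, and since $\chi(g) \in k^\times$ this says $rgr^{-1} = g$ for every $g \in G$.

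The main step is to pin down $\d$. Writing $\d(g) = \sum_{h \in G} c_{g,h}\, h$ and substituting into the $r$-coderivation identity (c) of \leref{1.4} gives
\begin{equation*}
\sum_h c_{g,h}\, h \ot h \;=\; \sum_h c_{g,h}\, h \ot rg \;+\; \sum_h c_{g,h}\, g \ot h.
\end{equation*}
Comparing coefficients on the $G \ti G$-basis of $kG \ot kG$: the diagonal vector $h \ot h$ on the left can only be matched on the right when $h \in \{g, rg\}$, forcing $c_{g,h} = 0$ otherwise; the remaining constraint comes from the cross term $g \ot rg$, whose coefficient on the right is $c_{g,g} + c_{g,rg}$ and must vanish whenever $r \neq 1$. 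Setting $\a(g) := c_{g,g}$ and using $r \in Z(G)$, this yields $\d(g) = \a(g)(g - rg) = \a(g)(1-r)g$, which equals $\sum g_1(1-r)\a(g_2)$; the identity $\d(a) = \sum a_1(1-r)\a(a_2)$ then extends to all of $kG$ by linearity.

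Finally I would verify that $\a: G \ra k$ is a $1$-cocycle by feeding the derived formula into the Leibniz rule $\d(gh) = \tau(g)\d(h) + \d(g)h$ with $\tau(g) = \chi(g)g$. A brief simplification using the centrality of $r$ reduces this to $\a(gh)(1-r)gh = (\a(g) + \chi(g)\a(h))(1-r)gh$. If $r \neq 1$, then $gh \neq rgh$ and comparing coefficients yields the cocycle identity $\a(gh) = \a(g) + \chi(g)\a(h)$, so $\a \in Z_{\chi}^1(G)$; if $r = 1$, then $\d = 0$ and any $\a$ works vacuously. The main obstacle is the coefficient bookkeeping in the coderivation step --- specifically, noticing that the only overlap of the two families of basis vectors on the right is at $g \ot rg$, which produces the single relation $c_{g,g} = -c_{g,rg}$; once this is in hand, everything else is routine.
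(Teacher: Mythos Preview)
Your argument is correct. The paper does not give its own proof of this lemma: it is simply quoted from \cite[Proposition 2.2]{Pa}, so there is no in-paper proof to compare against. Your derivation is exactly the natural specialization of \leref{1.4} to the group-algebra case, and the coefficient analysis in the coderivation step is carried out correctly (including the degenerate case $r=1$, where the same computation forces $\d=0$).
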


For the definition of $q$-binomial coefficients
$\binom{n}{m}_q$, the reader is directed to \cite[Page 74]{kas}.

\begin{lemma}\lelabel{1.6}
Assume that $B$ is a bialgebra. Let $a\in G(B)$ and $x\in B$ with $\D(x)=x\ot a+1\ot x$.
Let $q\in k^{\times}$ and $\b\in k$.\\
%{\rm (a) (\cite [Eq. (1)]{KR})} If $xa=qax$,
%then
%\begin{equation*}
%\D(x^n)=\sum_{l=0}^n\binom{n}{l}_q
%x^{n-l}\ot a^{n-l}x^l,\quad n>0.
%\end{equation*}
{\rm (a) } If $xa=qax+\b a(1-a)$,
then for any $n>0$ we have
\begin{equation*}
\D(x^n)=\sum_{l=0}^n\binom{n}{l}_qx^{n-l}\ot a^{n-l}x^l+P_n,
\quad P_n\in \sum_{i+j\<n-1}(B(i)\ot B(j)),
\end{equation*}
where $B(i)={\rm span}\{a^lx^i\mid l\>0\}$ for $i\>0$.\\
%{\rm (c) } (\cite [Corollary 4.10]{Sc})
%Assume that char$(k)=p>0$. If $xa=ax+a(1-a)$, then
%$$\D(x^p)=x^p\ot a^p+(p-1)!x\ot a^p+x\ot a+1\ot x^p,$$
{\rm (b) }  Assume that char$(k)=p>0$. If $xa=ax+a(1-a)$, then for all
$r\>1$,
\begin{equation*}
\D(x^{p^r}-x^{p^{r-1}})=(x^{p^r}-x^{p^{r-1}})\ot a^{p^r}+1\ot
(x^{p^r}-x^{p^{r-1}}).
\end{equation*}
\end{lemma}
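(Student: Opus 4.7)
The plan is to prove (a) by induction on $n$, and then to deduce (b) from (a) via a further induction on $r$.

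For part (a), the base case $n=1$ is exactly the hypothesis $\D(x)=x\ot a+1\ot x$. In the inductive step I would expand $\D(x^{n+1})=\D(x)\D(x^n)=(x\ot a+1\ot x)\D(x^n)$ and insert the inductive formula. The nontrivial piece is $(1\ot x)(x^{n-l}\ot a^{n-l}x^l)=x^{n-l}\ot xa^{n-l}x^l$, and to straighten the right tensor factor I first establish, by a short induction on $m$ from $xa=qax+\b a(1-a)$, the auxiliary identity $xa^m=q^m a^m x+\g_m(a)$ with $\g_m(a)\in k[a]$. The leading contribution $q^{n-l}x^{n-l}\ot a^{n-l}x^{l+1}$ from this term, combined with the contribution $x^{n+1-l}\ot a^{n+1-l}x^l$ from $(x\ot a)(x^{n-l}\ot a^{n-l}x^l)$, produces the coefficient
\[
\binom{n}{l}_q+q^{n+1-l}\binom{n}{l-1}_q=\binom{n+1}{l}_q
\]
of $x^{n+1-l}\ot a^{n+1-l}x^l$ via the $q$-Pascal identity. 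Everything else is absorbed into $P_{n+1}$, and a filtration check using the subspaces $B(i)$ shows $P_{n+1}\in\sum_{i+j\<n}B(i)\ot B(j)$.

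For part (b) I argue by induction on $r$. In the base case $r=1$, apply (a) with $n=p$, $q=1$, $\b=1$; since $p\mid\binom{p}{l}$ for $0<l<p$, the explicit sum collapses to $x^p\ot a^p+1\ot x^p$, so $\D(x^p)=x^p\ot a^p+1\ot x^p+P_p$. The crux is then to verify $P_p=x\ot(a-a^p)$: expand $(x\ot a+1\ot x)^p$ in $B\ot B$, straighten each monomial via $xa=ax+a(1-a)$, and check that after collection every term except $x^p\ot a^p$, $1\ot x^p$, and $x\ot(a-a^p)$ carries a coefficient divisible by $p$. Subtracting $\D(x)$ then yields $\D(x^p-x)=(x^p-x)\ot a^p+1\ot(x^p-x)$. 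For the inductive step from $r-1$ to $r$, I would apply (a) at $n=p^r$; by Lucas' theorem $\binom{p^r}{l}\equiv 0\pmod p$ for $0<l<p^r$, so $\D(x^{p^r})=x^{p^r}\ot a^{p^r}+1\ot x^{p^r}+P_{p^r}$. Combining this with the telescoping $x^{p^{r-1}}-x=\sum_{s=1}^{r-1}(x^{p^s}-x^{p^{s-1}})$ and the inductive hypotheses pins down $\D(x^{p^{r-1}})$, identifies $P_{p^r}$, and delivers the primitivity of $x^{p^r}-x^{p^{r-1}}$ with respect to the pair $(a^{p^r},1)$.

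The main obstacle will be the explicit evaluation of $P_p$ in the base case of (b). This is essentially a characteristic-$p$ Lie-theoretic assertion: the contributions arising from iterated commutators of $X=x\ot a$ and $Y=1\ot x$ (beginning with $[Y,X]=x\ot a(1-a)$) must, after summation modulo $p$, cancel down to the single term $x\ot(a-a^p)$. An alternative route is to invoke Jacobson's $(X+Y)^p$ formula in an associative algebra of characteristic $p$ and extract the relevant Lie-polynomial terms, or, after adjoining $a^{-1}$, to reduce the computation to the enveloping algebra of the two-dimensional nonabelian Lie algebra, where the required identity is classical.
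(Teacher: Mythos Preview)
Your plan for part (a) is correct and matches the paper's proof, which also proceeds by induction on $n$ using the $q$-Pascal identity. Your handling of the base case $r=1$ in part (b) is likewise sound: the claim $P_p=x\ot(a-a^p)$ is exactly what the paper extracts (via a cited formula $\D(x^p)=x^p\ot a^p+(p-1)!\,x\ot a^p+x\ot a+1\ot x^p$ together with Wilson's theorem $(p-1)!\equiv -1$), so any of your proposed routes to it would work.

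The inductive step in (b), however, has a genuine gap. From part (a) you only obtain $\D(x^{p^r})=x^{p^r}\ot a^{p^r}+1\ot x^{p^r}+P_{p^r}$ with \emph{no independent description} of $P_{p^r}$. The telescoping and the inductive hypotheses tell you what $\D(x^{p^{r-1}})$ is, and hence what $P_{p^r}$ \emph{would have to be} if the conclusion held, but they give you no way to verify that $P_{p^r}$ actually equals that expression. In effect your argument is circular at this point: the ``identification'' of $P_{p^r}$ assumes what you are trying to prove.

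The paper bypasses this difficulty with a short observation you are missing. From $xa=ax+a(1-a)$ one checks by induction that $xa^m=a^mx+m\,a^m(1-a)$, so in particular $xa^p=a^px$ in characteristic $p$. Thus, once the base case gives $\D(x^p-x)=(x^p-x)\ot a^p+1\ot(x^p-x)$, the two summands on the right \emph{commute} in $B\ot B$. Now simply raise both sides to the $p^{r-1}$-th power: by the freshman's dream (iterated) for commuting elements in characteristic $p$,
\[
\D\bigl((x^p-x)^{p^{r-1}}\bigr)=\bigl((x^p-x)\ot a^p+1\ot(x^p-x)\bigr)^{p^{r-1}}=(x^p-x)^{p^{r-1}}\ot a^{p^r}+1\ot(x^p-x)^{p^{r-1}},
\]
and $(x^p-x)^{p^{r-1}}=x^{p^r}-x^{p^{r-1}}$ since $x^p$ and $x$ commute. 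This replaces your inductive step entirely and avoids any need to control $P_{p^r}$.
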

\begin{proof}
Part (a) can be shown by the induction on $n$ using \cite[Proposition IV.2.1(c)]{kas}. For Part
(b), by \cite[Corollary 4.10]{Sc}, we have
$\D(x^p)=x^p\ot a^p+(p-1)!x\ot a^p+x\ot a+1\ot x^p$. Hence
$\D(x^p-x)=x^p\ot a^p+(p-1)!x\ot a^p+x\ot a+1\ot
x^p-x\ot a-1\ot x
=(x^p-x)\ot a^p+1\ot (x^p-x)$.
Note that $xa^p=a^px$, and so $((x^p-x)\ot a^p)(1\ot (x^p-x))=(1\ot
(x^p-x))((x^p-x)\ot a^p)$. Thus, for $r\>2$, we have
$\D(x^{p^r}-x^{p^{r-1}})=[\D(x^p-x)]^{p^{r-1}}=(x^{p^r}-x^{p^{r-1}})\ot a^{p^r}+1\ot (x^{p^r}-x^{p^{r-1}})$.
\end{proof}

\section{Classifications}\selabel{2}

In this section, we investigate the ranks of Hopf-Ore extensions of a group algebra
and the classification of pointed Hopf algebras of rank one by following the ideas
of \cite{KR, Ra, Sc}.

\subsection{The ranks of Hopf-Ore extensions of a group algebra}
In this subsection, assume that $H=kG[x; \tau, \d]$ is a Hopf-Ore
extension of $kG$, where $G$ is a group. By \leref{1.5}, $H=kG(\chi, a, \d)$,
where $\chi \in \widehat{G}$, $a\in Z(G)$ and $\d(g)=g(1-a)\a (g)$, $g \in G$,
for some $\a \in Z^1_{\chi}(G)$.

We have that $\a(gh)=\a(g)+\chi(g)\a(h)$ for all $g, h\in G$.
It follows that
$$\a(g^i)=\a(g)(1+\chi(g)+\cdots +\chi(g)^{i-1}),\ g\in G,\ i\>1,$$
and $\a(1)=0$. Hence $0=\a(1)=\a(gg^{-1})=\a(g)+\chi(g)\a(g^{-1})$,
and so $\a(g^{-1})=-\chi(g)^{-1}\a(g)=-\chi(g^{-1})\a(g)$, $g\in G$.

By the definition of Hopf-Ore extension, we have that
$$g^{-1}xg=\chi(g)x+\a (g) (1-a),\, \D(x)=x\ot a+1\ot x, \, g\in G.$$

In order to compute the rank of $H$,
we first simplify the presentation of $H=kG(\chi, a, \d)$
by changing the generator $x$, depending on the values of $\chi(a)$ and $\a(a)$.

{\bf Case 1:} $\chi(a)\neq 1$. In this case, set $x'=x-\frac{\a(a)}{1-\chi(a)}(1-a)$.
Then for any $g\in G$, we have
$$\begin{array}{rcl}
g^{-1}x'g&=&g^{-1}xg-\frac{\a(a)}{1-\chi(a)}(1-a)\\
&=&\chi(g)x+\a(g)(1-a)-\frac{\a(a)}{1-\chi(a)}(1-a)\\
&=&\chi(g)x'+(\frac{\chi(g)\a(a)}{1-\chi(a)}+\a(g)-\frac{\a(a)}{1-\chi(a)})(1-a)\\
&=&\chi(g)x'+\frac{1}{1-\chi(a)}({\chi(g)\a(a)}+\a(g)-\a(g)\chi(a)-\a(a))(1-a)\\
&=&\chi(g)x'+\frac{1}{1-\chi(a)}(\a(ga)-\a(ag))(1-a)\\
&=&\chi(g)x'.\\
\end{array}$$
$H$ is generated, as an algebra, by $G$ and $x'$. Thus, replacing the generator $x$ by $x'$,
one can assume that $g^{-1}xg=\chi(g)x$, $g\in G$,
and $\D(x)=x\ot a+1\ot x$ in this case (see \cite[Theorem 3.3(3)]{Sc}).

{\bf Case 2:} $\chi(a)= 1$ and $\a(a)=0$. In this case, $a^{-1}xa=x$.

Note that if $|a|=n<\oo$ with $\chi(a)= 1$, char$(k)=0$, or char$(k)=p>0$ and $p\nmid n$, then we have
$$0=\a(a^n)=\a(a)(1+\chi(a)+\cdots +\chi(a)^{n-1})=n\a(a),$$
which implies that $\a(a)=0$, where $|a|$ denotes the order of $a$.

{\bf Case 3:} $\chi(a)= 1$ and $\a(a)\neq 0$. In this case, $a\neq 1$.
By replacing $x$ with $\a(a)^{-1}x$, one may assume that $a^{-1}x a=x+(1-a)$ and
$\D(x)=x\ot a+1\ot x$.

Note that $H$ has a $k$-basis $\{gx^i \mid g\in G,i\>0\}$.
Let $H(n)={\rm span}\{gx^n\mid g\in G\}$ and $H[n]={\rm span}\{gx^i\mid g\in G, 0\< i \< n\}$,
$n\>0$.
Then $H[0]=H(0)=H_0=kG$ and $H[n]=\oplus_{i=0}^nH(i)$ for $n>0$. By \leref{1.6}(a),
$H[0]\subseteq H[1]\subseteq H[2]\subseteq \cdots$ is
a coalgebra filtration of $H$ (see \cite{Mo} for the definition of coalgebra
filtration of a coalgebra).

In the rest of this subsection, write $q=\chi(a)$ for simplicity.

\begin{proposition}\prlabel{2.1}
In \textbf{Cases 1} and \textbf{2}, $H=\oplus_{n=0}^{\oo}H(n)$ is a graded coalgebra.
\end{proposition}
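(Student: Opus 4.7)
The plan is to verify directly the three axioms defining a graded coalgebra for the decomposition $H=\bigoplus_{n\>0} H(n)$: that it is a direct sum of subspaces, that $\ep$ vanishes on $H(n)$ for $n>0$, and that $\D(H(n))\ss\sum_{i=0}^{n} H(i)\ot H(n-i)$.

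The first two axioms are immediate. Since $H=kG[x;\tau,\d]$ is a free left $kG$-module with basis $\{x^n\mid n\>0\}$, the family $\{gx^n\mid g\in G,\ n\>0\}$ is a $k$-basis of $H$, so $H=\bigoplus_{n\>0} H(n)$ as a vector space. Moreover $\ep(x)=0$ by \reref{1.3}, whence $\ep(gx^n)=\ep(g)\ep(x)^n=0$ for every $n>0$.

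The substantive condition is the third one. Since $\D$ is an algebra map and $\D(g)=g\ot g$, we have $\D(gx^n)=(g\ot g)\D(x^n)$, and left multiplication by $g\ot g$ sends $H(i)\ot H(j)$ into itself. It therefore suffices to show that $\D(x^n)\in\sum_{i+j=n}H(i)\ot H(j)$. The critical input is the commutation between $x$ and $a$. In \textbf{Case 1}, specialising $g^{-1}xg=\chi(g)x$ to $g=a$ yields $xa=qax$ with $q=\chi(a)$. In \textbf{Case 2}, $a^{-1}xa=x$ gives $xa=ax$, i.e.\ the same relation with $q=1$. In both cases the hypothesis of \leref{1.6}(a) holds with $\b=0$.

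A short induction on $n$, using $\D(x)=x\ot a+1\ot x$, the resulting identity $x^la=q^lax^l$, and the $q$-Pascal relation $q^l\binom{n}{l}_q+\binom{n}{l-1}_q=\binom{n+1}{l}_q$, refines \leref{1.6}(a) when $\b=0$ to the clean formula
$$\D(x^n)=\sum_{l=0}^{n}\binom{n}{l}_q x^{n-l}\ot a^{n-l}x^l,$$
so the correction term $P_n$ vanishes. Since $x^{n-l}\ot a^{n-l}x^l\in H(n-l)\ot H(l)$, this gives $\D(x^n)\in\sum_{i+j=n}H(i)\ot H(j)$ and completes the proof. The only delicate point, and the one I expect to require the most care, is justifying the vanishing of $P_n$ under the condition $\b=0$; this is a mild strengthening of \leref{1.6}(a) rather than a serious obstacle, and the two cases merge because Case 2 is precisely the $q=1$ instance of the same commutation.
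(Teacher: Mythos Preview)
Your proof is correct and follows essentially the same route as the paper. The paper's proof simply observes that $a^{-1}xa=qx$ in both cases and then cites \cite[Proposition~IV.2.2]{kas} (the quantum binomial formula) for the identity $\D(x^n)=\sum_{l=0}^{n}\binom{n}{l}_q x^{n-l}\ot a^{n-l}x^l$, which is exactly the formula you re-derive by induction as the $\b=0$ specialisation of \leref{1.6}(a); your additional remarks on the direct-sum and counit axioms and the reduction via $\D(gx^n)=(g\ot g)\D(x^n)$ just make explicit what the paper leaves implicit.
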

\begin{proof}
Since $a^{-1}xa=qx$ in \textbf{Cases 1} and \textbf{2}, it follows from \cite[Proposition IV.2.2]{kas}
or \cite[Eq. (1)]{KR} that
$\D (H(n))\ss \oplus_{i+j=n}(H(i)\ot H(j))$ for all
$n\>0$. Obviously, $\varepsilon(H(n))=0$ for all $n\>1$. This completes the proof.
\end{proof}

We now analyze the term $H_1$ in the coradical filtration of $H$. Some conclusions of the following
\thref{2.2} appeared in \cite[Lemma 5]{Ra}, and some can be derived from \cite[Lemma 4.9]{Sc}.
For the completeness, we state the theorem with an independent proof as follows.

\begin{theorem}\thlabel{2.2}
{\rm (a)} Assume char$(k)=0$. If $q$ is a primitive $n$-th root of unity for some $n\>2$,
then the rank of $H$ is $2$ and $H_1=H_0\op H_0x\op H_0x^n$;
otherwise, the rank of $H$ is $1$ and $H_1=H_0\op H_0x$.\\
{\rm (b)} Assume char$(k)=p>0$.
If $q$ is a primitive $N$-th root of unity for some $N\>2$,
then the rank of $H$ is infinite and $H_1=H_0\op H_0x\op (\op_{r\>0}H_0x^{Np^r})$;
if $q=1$,
then the rank of $H$ is infinite and $H_1=H_0\op (\op_{r\>0}H_0x^{p^r})$;
otherwise, the rank of $H$ is $1$ and $H_1=H_0\op H_0x$.
\end{theorem}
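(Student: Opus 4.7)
The plan is to identify the coradical $H_0$ and the first term $H_1$ of the coradical filtration of $H$ in each case, and then read off the rank from $\dim_k(k\ot_{H_0}H_1)-1$. I would first verify that $H_0=kG$: the $H[n]=\bigoplus_{i\<n}kG\cd x^i$ form a coalgebra filtration with $H[0]=kG$ a cosemisimple Hopf subalgebra, so $kG\ss H_0$; the reverse inclusion follows by a routine induction on filtration degree ruling out any additional group-likes. Since $H$ is then pointed, $H_1 = kG + \sum_{g,h\in G}P_{g,h}(H)$, reducing the task to locating all skew-primitives.

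In \textbf{Cases 1} and \textbf{2}, \prref{2.1} gives the grading $H=\bigoplus_{n\>0}H(n)$ with $H(n)=kG\cd x^n$, so any skew-primitive may be taken homogeneous of some degree $n\>1$. Since $\b=0$ in these cases, \leref{1.6}(a) simplifies to
\[\D(x^n)=\sum_{l=0}^n\binom{n}{l}_q x^{n-l}\ot a^{n-l}x^l,\]
and comparing the $l=0$ and $l=n$ tensor-factor types against $c\ot g+h\ot c$ forces any nonzero homogeneous skew-primitive in $H(n)$ to be a scalar multiple of $g\cd x^n$, which is then $(ga^n,g)$-primitive precisely when every middle $\binom{n}{l}_q$ vanishes. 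In characteristic zero this happens iff $n=1$ or $q$ is a primitive $n$-th root of unity with $n\>2$. In characteristic $p>0$ the multiplicative order of $q$ is automatically prime to $p$, and the $q$-analogue of Lucas's theorem together with Kummer's criterion for classical binomials reduces middle-vanishing to $n\in\{1\}\cup\{Np^r:r\>0\}$ when $q$ has order $N\>2$, and to $n\in\{p^r:r\>0\}$ when $q=1$.

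The main obstacle is \textbf{Case 3} ($\chi(a)=1$, $\a(a)\neq 0$), where $H$ is no longer a graded coalgebra. The essential new input is \leref{1.6}(b): in characteristic $p$ it produces skew-primitives $x^{p^r}-x^{p^{r-1}}$ for $r\>1$, which together with the $(a,1)$-primitive $x$ and an inductive telescoping place $x^{p^r}$ in $H_1$ for every $r\>0$. For the upper bound on $H_1$, valid in either characteristic, I would pass to the associated graded algebra with respect to the filtration $H[n]$: the induced relation in $\mathrm{gr}(H)$ is $\ov{x}\,a = a\,\ov{x}$, so $\mathrm{gr}(H)$ falls into the graded framework of \textbf{Case 2} analyzed above, and the top-degree component of any skew-primitive of $H$ lifts to a homogeneous skew-primitive in $\mathrm{gr}(H)$, forcing the top degree to be $1$ in characteristic zero and a power of $p$ in characteristic $p$.

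Finally, in each case the $H_1$ so produced is a free left $H_0$-module with the decomposition stated in the theorem, so $\dim_k(k\ot_{H_0}H_1)-1$ yields the asserted rank $1$, $2$, or $\infty$. The algebra-generation hypothesis of \deref{1.1} is automatic because $x\in H_1$ and $H$ is generated as an algebra by $kG$ and $x$.
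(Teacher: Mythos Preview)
Your treatment of Cases 1 and 2 matches the paper's: both use the grading from \prref{2.1} to reduce to homogeneous skew-primitives and then invoke the vanishing criterion for the middle $q$-binomials (your $q$-Lucas/Kummer route is equivalent to the paper's citation of \cite[Corollary~2]{Ra}). For Case 3, passing to $\mathrm{gr}(H)$ with respect to the filtration $H[n]$ is a pleasant conceptual repackaging of the paper's direct coefficient comparison against the basis $\{bx^i\ot cx^j\}$: both arguments extract from the leading part of $\D(z)$ that the top component of a skew-primitive $z$ is $\b x^{p^r}$ for some scalar $\b$, with $g=a^{p^r}$.

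There is, however, a genuine gap in your upper bound for Case 3. Knowing that a skew-primitive $z\notin H_0$ has top degree $p^r$ and top component $\b x^{p^r}$ does not, by itself, place $z$ in $H_0\oplus\bigoplus_{s\>0}H_0x^{p^s}$: the components of $z$ in intermediate degrees remain unconstrained by a single application of the $\mathrm{gr}$-argument. The missing step is the recursion the paper carries out explicitly (the paragraph beginning ``Set $n_1=n$, $r_1=r$, $h_1=h$''): subtract the skew-primitive $\b(x^{p^r}-x^{p^{r-1}})$ supplied by \leref{1.6}(b), which has the same $(a^{p^r},1)$-type as $z$, obtain a skew-primitive of strictly smaller top degree, apply the top-degree constraint again, and iterate down to $H[1]$. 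Your ``inductive telescoping'' appears only in the lower-bound direction; the same subtraction, used as a descent, is exactly what completes the upper bound. Once you insert that recursion, your argument and the paper's coincide.
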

\begin{proof}
By \cite[Theorem 5.4.1]{Mo} we know that $H_1$ is spanned by all group-like elements
and skew primitive elements. Hence we need to compute all $(g,h)$-primitive
elements for all $g, h\in G$.
Note that $y\in P_{g,h}(H)$ if and only if $h^{-1}y\in P_{h^{-1}g,1}(H)$.
Hence we only need to describe
the $(g,1)$-primitive elements for any $g\in G$.

We first consider \textbf{Cases 1} and \textbf{2}. In these cases,
 we may assume that
$$a^{-1}xa=qx,\quad \D(x)=x\ot a+1\ot x.$$

Let $0\neq h \in P_{g, 1}(H)$ for some $g\in G$.
By \prref{2.1}, $H$ is a graded coalgebra, and hence $P_{g, 1}(H)=\bigoplus_{i=0}^{\infty}(P_{g, 1}(H)\cap H(i))$.
Thus, we only need to consider the case that
$h\in H(i)=H_0x^i$, $i\>0$.  If $h\in H_0$, then $h=\beta (1-g)$ for some $\beta \in k^{\times}$.
If $h\in H_0x$, then $h=\sum _{b\in G}\beta _b bx$, $\beta _b\in k$.
Comparing the both sides of the equation
$$\sum _{b\in G}\b_b(bx\ot ba +b\ot bx)=
(\sum _{b\in G}\b_b bx)\ot g +1\ot (\sum_{b\in G}\b_b bx),$$
we have that $\b_b=0$ if $b\neq 1$. Hence $h=\b_1x$, and so $g=a$. Thus,
$$h=\b_1x,\quad \D(h)=h\ot a+1\ot h.$$
If $h\in H_0x^n$ for some $n\>2$, then $h=\sum_{b\in G}\b_b bx^n$, $\b_b\in k$. Hence we have
$$\sum _{b\in G}\b_b(\sum_{l=0}^n\binom{n}{l}_qbx^{n-l}\ot ba^{n-l}
 x^l)=(\sum _{b\in G}\b_b bx^n)\ot g +1\ot (\sum_{b\in G}\b_b bx^n),$$
from which one knows that $\b_b=0$ if $b\neq 1$. Thus, $h=\b_1x^n$,
$\binom{n}{l}_q=0$ for all
$1\<l\<n-1$, and $g=a^n$. By \cite[Corollary 2]{Ra}, char$(k)=0$ and $q$ is a primitive $n$-th root of unity,
or char$(k)=p>0$ and $q$ is a primitive $N$-th root of unity, where $n=Np^r$ with
$p\nmid N$, $N\>1$ and $r\>0$.
This shows the theorem for \textbf{Cases 1} and \textbf{2}.

Now we consider \textbf{Case 3}. In this case,
$a^{-1}xa=x+(1-a)$ and $\D(x)=x\ot a+1\ot x$, where $a\neq 1$.
Let $0\neq h=\sum_{i=0}^n\sum_{b\in G}\b(i,b)bx^i \in H$, where $\b(i,b)\in k$,
and $\b(n, b)\neq 0$ for some $b\in G$.
Assume $\D(h)=h\ot g+1\ot h$ for some $g\in G$. Then
$$\begin{array}{rl}
&\sum\limits_{i=0}^n\sum\limits_{b\in G}\sum\limits_{l=0}^i\b(i,b)\binom{i}{l}
bx^{i-l}\ot ba^{i-l}x^l+\sum\limits_{i=0}^n\sum\limits_{b\in G}\b(i,b)(b\ot b)P_i\\
=&\sum\limits_{i=0}^n\sum\limits_{b\in G}\b(i,b)bx^i\ot g+1\ot \sum\limits_{i=0}^n\sum\limits_{b\in G}\b(i,b)bx^i,\\
\end{array}$$
where $P_i$'s are the elements described as in \leref{1.6}(a).
Note that $\{x^i\ot x^j\mid i,j\>0\}$ is a
basis of $H\ot H$ as a free $H_0\ot H_0$-module, and
$\{bx^i\ot cx^j\mid i,j\>0, \, b,c\in G\}$ is a $k$-basis of $H\ot H$.
So we can compare the coefficients of the two sides of the above equation.

(1) If $n=0$, then obviously $h=\b(1-g)$ for some $\b\in k^{\times}$.

Now we consider the case of $n\>1$. By comparing the coefficients of $1\ot x^n$,
one gets that $\b(n, b)=0$ for all $b\neq 1$. Hence $\b(n,1)\neq 0$.
By comparing the coefficients of $x^n\ot 1$, one gets that $g=a^n$.

(2) If $n=1$, then $g=a$ and $h=h_0+\b x\in H[1]$ for some $h_0\in H(0)$ and $\b\in k^{\times}$.
Since $H[1]=H(0)\op H(1)$ is a graded coalgebra,
a similar argument as above shows that $h=\b_0(1-a)+\b x$ for some $\b_0\in k$.

$(3)$ If $n\>2$, by comparing the coefficients of $x^{n-l}\ot x^l$ ($0<l<n$),
one gets that $\b(n,1)\binom{n}{l}=0$ for all
$0<l<n$. This forces char$(k)=p>0$ and $n=p^r$
for some $r\>1$ by \cite[Corollary 2]{Ra}. Set $n_1=n$, $r_1=r$, $h_1=h$. Then
$h_2:=h-\b(p^{r_1},1)(x^{p^{r_1}}-x^{p^{r_1-1}})$ is an
$(a^{p^{r_1}},1)$-primitive element by \leref{1.6}(b). If $0\neq
h_2=\sum_{i=0}^{n_2}\sum_{b\in G}\g(i,b)bx^i$ such that
$\g(n_2,b)\neq 0$ for some $b\in G$, and $n_2\>2$, then a similar
discussion as above shows that $n_2=p^{r_2}$ for some
$r_2\>1$ and $a^{p^{r_1}}=a^{p^{r_2}}$. Continuing this procedure, we
have that $h-\sum_{i=1}^s\b_i(x^{p^{r_i}}-x^{p^{r_i-1}})\in H[1]$,
where $\b_i\in k^{\ti}$, $r_1>r_2>\cdots >r_s\>1$,
$s\>1$, and $a^{p^{r_1}}=a^{p^{r_2}}=\cdots =a^{p^{r_s}}$.
If $h-\sum_{i=1}^s\b_i(x^{p^{r_i}}-x^{p^{r_i-1}})=0$,
then $h=\sum_{i=1}^s\b_i(x^{p^{r_i}}-x^{p^{r_i-1}})$.
If $h-\sum_{i=1}^s\b_i(x^{p^{r_i}}-x^{p^{r_i-1}})\neq 0$, then
it follows from (1) and (2) that either $h=\sum_{i=1}^s\b_i(x^{p^{r_i}}-x^{p^{r_i-1}})+\b_0(1-g)$
with $\b_0\in k^{\times}$ and $g=a^{p^{r_1}}=a^{p^{r_2}}=\cdots =a^{p^{r_s}}$,
or $h=\sum_{i=1}^s\b_i(x^{p^{r_i}}-x^{p^{r_i-1}})+\b x+\b_0(1-a)$
with $\b\in k^{\times}$, $\b_0\in k$ and $a=a^{p^{r_1}}=a^{p^{r_2}}=\cdots =a^{p^{r_s}}$.
Note that the second possibility does not occur since $a\neq a^{p^r}$ for any $r\>1$
in this case.
This shows the theorem for \textbf{Case 3} by \leref{1.6}(b).
\end{proof}

By \thref{2.2} and its proof, we have the following corollary (see
\cite[Lemma 5]{Ra}).

\begin{corollary}\colabel{2.3}
Let $g\in G$ and $z\in H\backslash H_0$ with $\D (z)=z\ot g+1\ot z$.\\
{\rm(a)} In the case of char$(k)=0$, we have\\
\mbox{\hspace{0.4cm} \rm(i) } if $q$ is a primitive $n$-th root of unity with $n\>2$,
then either $g=a,\, z=\gamma x+\b (1-a)$,\\
\mbox{\hspace{0.8cm} } or $g=a^n$, $z=\gamma x^n+\b (1-a^n)$, where $\gamma, \b \in k$;\\
\mbox{\hspace{0.3cm} \rm(ii) } Otherwise, $g=a$, $z=\gamma x+\b (1-a)$ for some
$\gamma, \b \in k$.\\
{\rm(b)} In the case of char$(k)=p>0$, we have\\
\mbox{\hspace{0.4cm} \rm(i) } if $q$ is a primitive $N$-th root of unity with $N\>2$, then
either $g=a$, $z=\gamma x+\b(1-a)$,\\
\mbox{\hspace{0.8cm} } or $g=a^{Np^r}$, $z=\gamma(1-a^{Np^r})+\sum_{i=1}^s\b_ix^{Np^{r_i}}$,
where $\gamma, \b, \b_1, \cdots, \b_s\in k$, $r, r_1, \cdots, r_s\>0$\\
\mbox{\hspace{0.8cm} } such that $a^{Np^{r_i}}=a^{Np^r}$;\\
\mbox{\hspace{0.3cm} \rm(ii) } if $q=1$ and $a^{-1}xa=x$, then $g=a^{p^r}$,
$z=\gamma (1-a^{p^r})+\sum_{i=1}^s\b_ix^{p^{r_i}}$
for some $\gamma,\b_1,\cdots,$\\
\mbox{\hspace{0.8cm} } $\b_s\in k$ and
$r, r_1, \cdots, r_s\>0$ such that $a^{p^{r_i}}=a^{p^r}$;\\
\mbox{\hspace{0.2cm} \rm(iii) } if $q=1$ and $a^{-1}xa=x+(1-a)$,
then $g=a^{p^r}\neq a$, $z=\gamma (1-a^{p^r})+\sum_{i=1}^s\b_i(x^{p^{r_i}}-x^{p^{r_i-1}})$\\
\mbox{\hspace{0.8cm} } for some $\gamma, \b_1, \cdots, \b_s\in k$ and
$r, r_1, \cdots, r_s\>0$ such that $r_i\>1$ and $a^{p^{r_i}}=a^{p^r}$;\\
\mbox{\hspace{0.2cm} \rm (iv) } if $q$ is not a root of unity, then $g=a$, $z=\gamma x+\b (1-a)$ for some
$\gamma, \b \in k$.
\end{corollary}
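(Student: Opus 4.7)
The plan is to derive this classification directly from Theorem 2.2 and its proof, since any element $z \in H \setminus H_0$ satisfying $\Delta(z) = z \otimes g + 1 \otimes z$ is a skew-primitive, hence lies in $H_1$ by \cite[Theorem 5.4.1]{Mo}. What Theorem 2.2 already produced is the module-theoretic description of $H_1$; the corollary just re-packages it by picking off, for each prescribed group-like $g$, the exact linear combinations inside $H_1$ that are $(g,1)$-primitive and writing them together with the trivial summand $\gamma(1-g) \in P_{g,1}(H)$.

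For parts (a) and the ``twisted'' subcases (b)(i) and (b)(iv), I would invoke Proposition 2.1 so that $H = \bigoplus_{n\geq 0} H(n)$ is a graded coalgebra and $P_{g,1}(H)$ splits into homogeneous pieces. The degree-$0$ piece contributes $\gamma(1-g)$. For $n \geq 1$ a nonzero homogeneous $(g,1)$-primitive in $H(n)$ must, as computed inside the proof of Theorem 2.2, have the form $\beta_1 x^n$, force $g = a^n$, and require $\binom{n}{l}_q = 0$ for $1 \leq l \leq n-1$. Then \cite[Corollary 2]{Ra} pins $n$ down to $1$ always, to $n$ itself when char$(k)=0$ with $q$ a primitive $n$-th root, and to $Np^r$ when char$(k)=p>0$ with $q$ a primitive $N$-th root. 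Matching $g = a^n$ yields (a)(i)(ii) and (b)(i)(iv).

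For (b)(ii), the graded decomposition from Proposition 2.1 still applies (this falls under Case 2 in the pre-theorem discussion), but now the $q$-binomials reduce to ordinary binomials, and $\binom{n}{l}=0$ for $1 \leq l \leq n-1$ in characteristic $p$ forces $n = p^r$. Summing the contributions $\beta_i x^{p^{r_i}}$ over those $r_i$ with $a^{p^{r_i}} = g$ and adding the trivial $\gamma(1-g)$ gives the stated normal form. For (b)(iii) we are in Case 3, where the proof of Theorem 2.2 already carried out the inductive reduction: using Lemma 1.6(b) one subtracts $\beta(n,1)(x^{p^{r_1}} - x^{p^{r_1-1}})$ from a hypothetical top-degree $\beta(n,1)x^n$ to drop the degree, and iterates to arrive at $z - \sum_{i=1}^s \beta_i(x^{p^{r_i}} - x^{p^{r_i-1}}) \in H[1]$ with $a^{p^{r_i}} = g$. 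The classification of $H[1]$-summands in Case 3 then leaves either $\gamma(1-g)$ or the mixed alternative $\beta x + \gamma(1-a)$; the observation $a \neq a^{p^r}$ for $r \geq 1$ rules out the latter since $g = a^{p^{r_1}} \neq a$, which is precisely why $r_i \geq 1$ in (iii).

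There is no genuinely hard step: all the combinatorial work is already contained in Theorem 2.2 and Lemma 1.6(b). The only real task is bookkeeping—lining up the parameters $r, r_i, N$ across the four subcases of (b), and verifying that the ``mixed'' linear combinations one could naively write are either incompatible with the exponent constraint $a^{p^{r_i}} = g$ or already absorbed into one of the listed forms.
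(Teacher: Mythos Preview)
Your proposal is correct and matches the paper's approach exactly: the paper gives no separate proof of Corollary~2.3, simply stating that it follows from Theorem~2.2 and its proof (together with \cite[Lemma~5]{Ra}). Your breakdown into the graded cases via Proposition~2.1 for (a), (b)(i)(ii)(iv) and the inductive degree-reduction via Lemma~1.6(b) for (b)(iii) is precisely the content of that proof, and your handling of the exclusion $a\neq a^{p^r}$ in Case~3 mirrors the paper's own remark.
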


\subsection{The classification of pointed Hopf algebras of rank one}
\cite{KR} and \cite{Sc} classified all finite dimensional pointed Hopf algebras of rank one over
an algebraically closed field with char$(k)=0$ and char$(k)=p>0$, respectively.
Actually, the most classification results there still hold without the assumptions
that $k$ is algebraically closed and the Hopf algebras are
finite dimensional. In this subsection, we describe the structures of
(infinite or finite dimensional)
pointed Hopf algebras of rank one over an arbitrary field $k$.

\begin{lemma}\lelabel{2.4}
Let $H$ be a pointed Hopf algebra of rank one, $G=G(H)$.
Then there are some $a\in G$ and
$x\in H\setminus H_0$ such that $\D(x)=x\ot a+1\ot x$. Moreover, $H_1=H_0\op H_0x$.
\end{lemma}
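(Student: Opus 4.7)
The plan is to combine the Taft--Wilson theorem with the rank one hypothesis; the latter will force the space of nontrivial skew primitives of $H_1$, modulo $H_0$, to be a free cyclic left $H_0$-module. First I would invoke \cite[Theorem~5.4.1]{Mo} (Taft--Wilson) to write
\[
H_1=H_0\op\bigoplus_{g,h\in G}P'_{g,h}(H),
\]
where $P'_{g,h}(H)$ is a vector-space complement of $k(g-h)$ in $P_{g,h}(H)$. Setting $\ov{P}_{g,h}:=P_{g,h}(H)/k(g-h)$ and $d_g:=\dim_k\ov{P}_{g,1}$, the lemma then reduces to producing a unique $a\in G$ with $d_a=1$ and picking any representative $x\in P_{a,1}(H)\setminus H_0$, since in that case $\D(x)=x\ot a+1\ot x$ by the very definition of $P_{a,1}(H)$.

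Applying $\D$ to $h'v$ for $v\in P_{g,h}(H)$ and $h'\in G$ shows $h'P_{g,h}(H)=P_{h'g,h'h}(H)$ together with $h'\cdot k(g-h)=k(h'g-h'h)$, so left multiplication by $G$ permutes the summands of the canonical decomposition $H_1/H_0=\bigoplus_{g,h}\ov{P}_{g,h}$ via $(g,h)\mapsto(h'g,h'h)$. Every pair $(g,h)$ lies in the orbit of the unique representative $(h^{-1}g,1)$, and that orbit is parameterised bijectively by $h'\in G$; this produces an isomorphism of left $H_0$-modules
\[
H_1/H_0\;\cong\;\bigoplus_{g\in G}H_0\ot_k\ov{P}_{g,1}\;\cong\;\bigoplus_{g\in G}H_0^{\,d_g}.
\]

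By the remark following \deref{1.1}, the rank one hypothesis forces $H_1$ to be free of rank $2$ over $H_0$, hence $H_1/H_0\cong H_0$ as left $H_0$-modules. Applying $k\ot_{H_0}(-)$ turns the above isomorphism into $k\cong k^{\sum_g d_g}$, so there is a unique $a\in G$ with $d_a=1$ (and $d_g=0$ otherwise). Any $x\in P_{a,1}(H)\setminus H_0$ then satisfies $\D(x)=x\ot a+1\ot x$, and the freeness of $H_1/H_0=H_0\cdot\ov{x}$ delivers at once both $H_1=H_0+H_0x$ and $H_0\cap H_0x=0$, so that $H_1=H_0\op H_0x$. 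The step I expect to require the most care is the $H_0$-module identification of $H_1/H_0$ above: one must verify that the purely $k$-linear Taft--Wilson decomposition is genuinely $H_0$-linear after quotienting by $H_0$, so that the rank one hypothesis singles out a single simple piece.
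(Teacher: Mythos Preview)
Your argument is correct and follows the same underlying strategy as the paper's proof: both use the Taft--Wilson structure of $H_1$ together with the rank one hypothesis to isolate a single nontrivial $(a,1)$-primitive direction, and then conclude that $\{1,x\}$ is an $H_0$-basis of $H_1$. The only difference is that the paper outsources the details to \cite[Proposition~2]{TaWi}, \cite[Proposition~1(a)]{KR}, and \cite[Remark~2.7~ff.]{Sc}, whereas you work out the $H_0$-module identification $H_1/H_0\cong\bigoplus_{g}H_0^{d_g}$ explicitly; your version is therefore more self-contained, and as a by-product you even obtain the uniqueness of $a$ (which the paper postpones to \prref{2.5}(a)). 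The one step you flagged---that the $k$-linear Taft--Wilson decomposition becomes $H_0$-linear on $H_1/H_0$---is handled correctly by your observation that $h'P_{g,h}(H)=P_{h'g,h'h}(H)$ and $h'\cdot k(g-h)=k(h'g-h'h)$, and the passage from ``$H_1$ free of rank $2$'' to ``$H_1/H_0\cong H_0$'' is justified because you have already shown $H_1/H_0$ is free, so the short exact sequence $0\to H_0\to H_1\to H_1/H_0\to 0$ splits and rank is additive.
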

\begin{proof}
Since $H$ is a pointed Hopf algebra of rank one, $H_0\neq H_1$.
It follows from \cite[Proposition 2]{TaWi} that
there are some $a\in G$ and
$x\in H_1\setminus H_0$ such that $\D(x)=x\ot a+1\ot x$
(see \cite[Proposition 1(a)]{KR} and its proof). Then from the paragraph
after \cite[Remark 2.7]{Sc}, one knows that $\{1, x\}$ forms a basis of $H_1$
as a left $H_0$-module, and so $H_1=H_0\op H_0x$.
\end{proof}

In the rest of this section, assume that $H$ is an arbitrary (infinite or finite dimensional)
pointed Hopf algebra of rank one, and $G=G(H)$. In this case, $H_0=kG$.
By \leref{2.4}, there exist some $a\in G$ and $x\in H_1\setminus H_0$
such that $H_1=H_0 \op H_0x$ and $\D(x)=x\ot a+1\ot x$. Moreover, $H$ is generated by $x$ and $G$
as an algebra, $H_1=H_0 \op H_0x$ is a graded coalgebra, and $\{gx^i\mid g\in G,0\<i\<1\}$
is a $k$-basis of $H_1$.

The following \prref{2.5} is contained in \cite[Proposition 1]{KR} and \cite[Lemma 2.8]{Sc}
when $H$ is finite dimensional.

\begin{proposition}\prlabel{2.5}
{\rm (a)} If $g\in G$ and $z\in H\setminus H_0$ with $\D(z)=z\ot g +1\ot z$, then $g=a$ and
$z=\gamma x+\b (1-a)$ for some $\gamma\in k^{\times}$ and $\b\in k$.\\
{\rm (b)} $a\in Z(G)$.\\
{\rm (c)} There exist a character $\chi \in \widehat{G}$ and a $1$-cocycle $\a \in Z_{\chi}^1(G)$
such that $$g^{-1}xg=\chi (g)x+\a(g)(1-a),\quad g\in G.$$
In particular, if $a=1$ then $g^{-1}xg=\chi (g)x$, $g\in G.$
%{ \rm (e)} The character $\chi$ in (c), (d) satisfies the condition that $\chi(a)$ is not a primitive $n(n\>2)$-th root of unity.
\end{proposition}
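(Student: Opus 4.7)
The plan is to prove parts (a), (b), (c) in order, with (b) and (c) following quickly from (a) via conjugation of $x$ by elements of $G$.

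The real content is part (a). By \cite[Theorem 5.4.1]{Mo}, any $(g,1)$-primitive element of $H$ lies in $H_1=H_0\op H_0x$, so I can write $z=h_0+h_1x$ with $h_0=\sum_{c\in G}\b_c c$ and $h_1=\sum_{b\in G}\g_b b$ in $kG$. Expanding the identity $\D(z)=z\ot g+1\ot z$ using $\D(x)=x\ot a+1\ot x$ and comparing coefficients in the $k$-basis $\{b_1x^i\ot b_2 x^j:b_1,b_2\in G,\,i,j\>0\}$ of $H\ot H$, the terms of shape $bx\ot *$ force $ba=g$ whenever $\g_b\neq 0$, so $h_1$ collapses to the single summand $\g_{ga^{-1}}ga^{-1}$; the terms of shape $*\ot bx$ then force $ga^{-1}=1$, i.e., $g=a$ and $h_1=\g\in k$, with $\g\neq 0$ because $z\notin H_0$. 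The grouplike part of the identity reduces to $\D(h_0)=h_0\ot a+1\ot h_0$, and comparing coefficients of $c_1\ot c_2$ yields $\b_c=0$ for $c\notin\{1,a\}$ together with $\b_a=-\b_1$ (when $a\neq 1$), so $h_0=\b(1-a)$.

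For part (b), fix $g\in G$ and compute $\D(g^{-1}xg)=g^{-1}xg\ot g^{-1}ag+1\ot g^{-1}xg$. Since conjugation is an algebra automorphism and $x\notin kG$ forces $g^{-1}xg\notin kG=H_0$, applying part (a) gives $g^{-1}ag=a$, so $a\in Z(G)$. For part (c), applying part (a) to $z=g^{-1}xg$ produces scalars $\chi(g)\in k^{\ti}$ and $\a(g)\in k$ with $g^{-1}xg=\chi(g)x+\a(g)(1-a)$. Computing $(gh)^{-1}x(gh)$ by conjugating successively by $g$ and then by $h$ gives $\chi(g)\chi(h)x+(\chi(g)\a(h)+\a(g))(1-a)$, which must equal $\chi(gh)x+\a(gh)(1-a)$. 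When $a\neq 1$, the linear independence of $x$ and $1-a$ (inherited from $H_1=H_0\op H_0x$) yields $\chi\in\widehat{G}$ and $\a\in Z^1_\chi(G)$; when $a=1$ the $(1-a)$ term vanishes, and the same comparison shows $\chi$ is still a character while $\a$ may be taken identically zero, giving the stated special case.

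The main obstacle is the coefficient bookkeeping in part (a), particularly extracting the clean form $h_0=\b(1-a)$ from the grouplike component of $\D(z)$ and handling the degeneracy $a=1$, where $1-a=0$ so both the form of $z$ and the cocycle $\a$ lose uniqueness. Once (a) is established, parts (b) and (c) are essentially a matter of conjugating $x$ and reading off the resulting identities.
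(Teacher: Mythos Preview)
Your proof is correct and follows essentially the same approach as the paper: skew-primitives lie in $H_1=H_0\oplus H_0x$, a coefficient comparison in that basis pins down $g=a$ and the form of $z$, and then conjugating $x$ by $g\in G$ gives (b) and (c). The only cosmetic differences are that the paper invokes the graded-coalgebra splitting $P_{g,1}(H_1)=(P_{g,1}\cap H_0)\oplus(P_{g,1}\cap H_0x)$ to treat the two components separately, and that your basis claim should be stated for $H_1\ot H_1$ rather than $H\ot H$ (since Proposition~2.6 has not yet been proved), but this does not affect the argument.
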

\begin{proof}
{ \rm (a)} Let $g\in G$ and $z\in H\setminus H_0$ with $\D(z)=z\ot g+1\ot z$. Then $z\in H_1$.
Since $H_1=H_0\op H_0x$ is a graded coalgebra, $P_{g, 1}(H_1)=(P_{g, 1}(H_1)\cap H_0)\oplus(P_{g, 1}(H_1)\cap H_0x)$.
Hence we only need to consider the case of $z\in H_0x$.
Let $z\in H_0x$. Then $z=\sum_{b\in G}\gamma_b bx$ for some $\gamma_b\in k$, and hence
$$\sum_{b\in G}\gamma_b(bx\ot ba+b\ot bx)=(\sum_{b\in G}\gamma_b bx)\ot g+1\ot (\sum_{b\in G}\gamma _bbx).$$
It follows that $g=a$ and $\gamma _b=0$ for all $b\neq 1$. Thus $z=\gamma_1 x$ with $0\neq\gamma_1\in k$.

{\rm (b),(c)} For any $g\in G$, we have that $\D(g^{-1}xg)=g^{-1}xg\ot g^{-1}ag+1\ot g^{-1}xg$.
By {\rm (a)}, one knows that $g^{-1}ag=a$ and $g^{-1}xg=\chi(g)x+\a (g)(1-a)$
for some $\chi(g),\ \a(g)\in k$ with $\chi(g)\neq 0$. Hence $a\in Z(G)$.
Obviously, if $a=1$ then $\chi\in \widehat{G}$ and one can take $\a(g)=0$, $g\in G$.
Now assume that $a\neq 1$. We claim that $\chi\in \widehat{G}$ and $\a\in Z_{\chi}^1(G)$.  In fact,
let $g,h\in G$. Then, on one hand, $(gh)^{-1}x(gh)=\chi (gh)x+\a(gh)(1-a)$. On the other hand,
\begin{align*}
(gh)^{-1}x(gh)=&h^{-1}(g^{-1}xg)h\\
 =&h^{-1}(\chi(g)x+\a (g)(1-a))h\\
=&\chi(g)(\chi(h)x+\a (h) (1-a))+\a(g)(1-a)\\
=&\chi(g)\chi(h)x+(\chi(g)\a (h)+\a (g))(1-a).
\end{align*}
It follows that $\chi(gh)=\chi(g)\chi(h)$, and hence $\chi \in \widehat{G}$.
Since $a\neq 1$, we also have $\a(gh)=\chi(g)\a(h)+\a (g)$, which means that
$\a\in Z_{\chi}^1(G)$.
\end{proof}

From \prref{2.5}(c), we have
$$g^{-1}xg=\chi(g)x+\a(g)(1-a), \ g\in G$$
in $H$. By a discussion similar to the one about the Hopf-Ore extension before,
one can classify $H$ in three types according to the values of $\chi(a)$ and $\a(a)$
(see \cite[Definition 4.2]{Sc}).\\
\mbox{\hspace{0.5cm}{\bf Type 1:}} $\chi(a)\neq 1$. In this case, $a\neq 1$,
by setting $\gamma=\frac{\a(a)}{1-\chi(a)}$ and substituting $x$ with
\mbox{\hspace{1.6cm}} $x-\gamma(1-a)$,
one may assume that
 $$g^{-1}xg=\chi(g)x, \ g\in G.$$
\mbox{\hspace{0.5cm}{\bf Type 2:}} $\chi(a)=1$ and $\a(a)=0$. In this case, we have
$$a^{-1}xa=x.$$
\mbox{\hspace{0.5cm}{\bf Type 3:}} $\chi(a)=1$ and $\a(a)\neq 0$. In this case, $a\neq 1$ and
$a^{-1}xa=x+\a(a)(1-a)$. By
\mbox{\hspace{1.6cm}} replacing $x$ with $\a(a)^{-1}x$, one may assume that
\begin{equation*}
   a^{-1}xa=x+(1-a),\quad \D (x)=x\ot a+1\ot x.
\end{equation*}

When $G=\langle a\rangle$ is a cyclic group or $G$ is a finite group, the following
\prref{2.6} appears in \cite[Lemma 5]{Ra} and \cite[Theorem 3.11 and Lemma 4.12]{Sc}.
Actually, the proofs there are still valid for the general case.

\begin{proposition}\prlabel{2.6}
$H$ is a free left $H_0$-module under left multiplication with a basis $\{1, x, x^2, \cdots \}$ or
$\{1, x, x^2, \cdots, x^{n-1}\}$ for some $n\>2$, where $x$ is the $(a, 1)$-primitive
element in $H$ as given above. Furthermore, in case that $\{1,x,x^2,\cdots ,x^{n-1}\}$ is
an $H_0$-basis of the free left $H_0$-module $H$, where $n\>2$, we have the following statements:\\
{\rm (a)} If char$(k)=0$, then $\chi(a)$ is a primitive $n$-th root of unity, and by a suitable substituting of $x$,
we have that $g^{-1}xg=\chi(g)x$ for all $g\in G$, and $x^n=\b (1-a^n)$ for some $\b \in k$.\\
{\rm (b)} If char$(k)=p>0$, then either $p\nmid n$ and $\chi(a)$ is a primitive $n$-th root of unity,
or $n=p$ and $\chi(a)=1$. Moreover, we have\\
\mbox{\hspace{0.4cm} (i)} if $p\nmid n$, then by a suitable substituting of $x$,
we have that $g^{-1}xg=\chi(g)x$ for all $g\in$
\mbox{\hspace{0.8cm}} $G$, and $x^n=\b (1-a^n)$ for some $\b \in k$;\\
\mbox{\hspace{0.3cm} (ii)} if $n=p$ and $a^{-1}xa=x$, then $x^p=\b x+\gamma (1-a^p)$
for some $\b, \gamma \in k$. Moreover, $a^p=a$
\mbox{\hspace{0.8cm}} if $\b\neq 0$;\\
\mbox{\hspace{0.2cm} (iii)} if $n=p$ and $a^{-1}xa=x+\a(a)(1-a)$ with $\a(a)\neq 0$,
then by a suitable substituting
\mbox{\hspace{0.9cm}} of $x$, we have $x^p=x+\g(1-a^p)$ for some $\g \in k$.
\end{proposition}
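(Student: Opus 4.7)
The plan is to realise $H$ as a quotient of the Hopf-Ore extension $\tilde H=kG(\chi,a,\delta)$, where $\delta(g)=g(1-a)\alpha(g)$ and $(\chi,\alpha)$ are the data produced by \prref{2.5}(c). Since the defining relations of $\tilde H$ are exactly those satisfied by $kG$ and $x$ inside $H$, the inclusion $kG\hookrightarrow H$ together with $x\mapsto x$ extends to a surjective Hopf-algebra map $\pi:\tilde H\to H$ fixing $kG$. Because $\tilde H$ is free as a left $kG$-module on $\{x^i\}_{i\geq 0}$ and $\ker\pi\cap kG=0$, a minimality argument on the leading $x$-degree of elements of $\ker\pi$ (using that the leading-coefficient set is a left $kG$-ideal stable under the Hopf structure) shows that either $\ker\pi=0$, yielding the infinite basis $\{1,x,x^2,\ldots\}$, or there is a smallest $n\geq 2$ together with $h_0,\ldots,h_{n-1}\in H_0$ such that $x^n=\sum_{i<n}h_ix^i$ in $H$, and then $\{1,x,\ldots,x^{n-1}\}$ is automatically an $H_0$-basis by minimality.

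Now suppose we are in the finite case with $n\geq 2$. In Types 1 and 2 (so $xa=qax$ with $q=\chi(a)$, and the $P_n$-term of \leref{1.6}(a) vanishes), one has
\[\Delta(x^n)=\sum_{l=0}^{n}\binom{n}{l}_q\,x^{n-l}\otimes a^{n-l}x^l,\]
whereas $\Delta(\sum_{i<n}h_ix^i)$ sits in total $x$-bidegree $\leq n-1$. Since the $x^{n-l}\otimes a^{n-l}x^l$ with $0<l<n$ are genuine basis vectors in $H\otimes H$, comparison forces $\binom{n}{l}_q=0$ in $k$ for all $0<l<n$. By \cite[Corollary~2]{Ra} this yields either char$(k)=0$ with $q$ a primitive $n$-th root of unity, or char$(k)=p>0$ with $q$ a primitive $N$-th root of unity and $n=Np^r$, $p\nmid N$; the remaining terms then say $x^n$ is $(a^n,1)$-primitive.

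To exclude $r\geq 1$, observe that $\binom{N}{l}_q=0$ for $0<l<N$ as well, so $x^N$ is $(a^N,1)$-primitive; but $N<n$ makes $x^N$ a distinct basis element, while \prref{2.5}(a) forces $x^N\in H_0$ or $x^N=\gamma x+\beta(1-a)$, both contradicting basis independence. Hence $r=0$, which gives $p\nmid n$ in (b)(i); in Type 2 (where $q=1$ and $N=1$) the same argument collapses to $n=p$. Now apply \prref{2.5}(a) to $x^n$ itself. In Type 1, the identity $q^n=1$ makes $x^n$ commute with $a$; comparing with $a^{-1}(\gamma x+\beta(1-a))a=q\gamma x+\beta(1-a)$ gives $(q-1)\gamma=0$, hence $\gamma=0$, so $x^n\in H_0$ and $x^n=\beta(1-a^n)$, proving (a) and (b)(i). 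In Type 2 with $n=p$, write $x^p=h_0+h_1x$ and equate the two expressions for $\Delta(x^p)$; matching the four $x$-bidegrees produces $h_0=\gamma(1-a^p)$, $\Delta(h_1)=1\otimes h_1$ (so $h_1=\beta\in k$), and $\beta(a-a^p)=0$, which is (b)(ii).

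For Type 3, \leref{1.6}(b) shows $x^{p^r}-x^{p^{r-1}}$ is $(a^{p^r},1)$-primitive, and the analogue of the intermediate-exponent argument applied to $x^p-x$ (viewed as $(a^p,1)$-primitive and a distinct basis element whenever $n\geq p^2$) rules out $n\neq p$, leaving $n=p$. The decisive new ingredient is that $a^p\neq a$ in Type 3: iterating $a^{-1}xa=x+(1-a)$ gives $a^{-m}xa^m=x+m(1-a)$, so $a^{-p}xa^p=x$ in characteristic $p$, whereas $a^p=a$ would force $a^{-1}xa=x$, contradicting $a\neq 1$. With $a^p\neq a$, \prref{2.5}(a) excludes the $\gamma x+\beta(1-a)$ branch for the $(a^p,1)$-primitive $x^p-x$, leaving $x^p-x\in H_0$ and hence $x^p=x+\gamma(1-a^p)$, establishing (b)(iii). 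I expect the main obstacle to be precisely this intermediate-exponent use of \prref{2.5}(a)—at $x^N$ in Types 1, 2 and at $x^p-x$ in Type 3—since it demands combining the refined classification of skew primitives with the basis-independence of $\{1,x,\ldots,x^{n-1}\}$.
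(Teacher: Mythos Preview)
Your strategy is sound and coincides with what the references \cite{KR,Ra,Sc} (which the paper simply cites) actually do: compare $\Delta(x^n)$ with $\Delta(\sum_{i<n}h_ix^i)$ in the free $H_0\otimes H_0$-basis $\{x^i\otimes x^j\}_{i,j<n}$, read off $\binom{n}{l}_q=0$ for $0<l<n$, invoke \cite[Corollary~2]{Ra}, and then feed the resulting skew-primitive elements into \prref{2.5}(a). Two points deserve attention.

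First, the free-basis step. Your phrase ``the leading-coefficient set is a left $kG$-ideal stable under the Hopf structure'' gestures at the right mechanism but does not by itself force the minimal-degree relation in $\ker\pi$ to be monic. What is actually needed (and is the content of \cite[Lemma~1(a)]{KR}) is that $H_1=H_0\oplus H_0x$ together with the coproduct formula implies, by induction on $m$, that the image of $x^m$ in $H/\sum_{i<m}H_0x^i$ either vanishes or generates a free rank-one $H_0$-summand; at the first $m=n$ where it vanishes one gets $x^n\in\sum_{i<n}H_0x^i$ directly.

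Second, and more substantively, your Type~3 paragraph has a gap. You jump straight to \leref{1.6}(b) and the intermediate-exponent argument at $x^p-x$, but this already presupposes $\mathrm{char}(k)=p>0$ and that $n$ is a $p$-power---neither of which you have established for Type~3. The fix is to run the same bidegree comparison as in Types~1 and~2, now using \leref{1.6}(a) with $q=1$ and $\beta=1$: the extra term $P_n$ lies in total bidegree $\le n-1$, so the middle ordinary binomials $\binom{n}{l}$ must vanish in $k$ for all $0<l<n$. By \cite[Corollary~2]{Ra} this forces $\mathrm{char}(k)=p$ and $n=p^r$ for some $r\ge1$ (in particular ruling out Type~3 in characteristic zero, which is needed for part~(a)). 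Only then does your argument apply: for $r\ge2$, $x^p-x$ is $(a^p,1)$-primitive by \leref{1.6}(b) and lies outside $H_0\oplus H_0x$ (since $x^p$ is a basis element), contradicting \prref{2.5}(a); hence $r=1$, and your $a^p\neq a$ computation then finishes~(b)(iii) cleanly.
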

\begin{proof}
An argument similar to the proof of \cite[Lemma 1(a)]{KR} shows
that $H$ is a free left $H_0$-module under the left multiplication with a basis $\{1, x, x^2, \cdots \}$ or
$\{1, x, x^2, \cdots, x^{n-1}\}$ for some $n\>2$.
Then Parts (a) and (b)(i, ii) follow from the proof of \cite[Lemma 5]{Ra} or
\cite[Theorem 3.11]{Sc}, and Part (b)(iii) follows from \cite[Lemma 4.12]{Sc}.
\end{proof}

We have already described the algebra structure of $H$ in Propositions 2.5 and 2.6.
Now we give another description of $H$ using the Hopf-Ore extension as follows.
For any $y\in H$, let $\langle y\rangle$ denote the ideal of $H$ generated by $y$.

\begin{theorem}\thlabel{2.7}
$H$ is isomorphic to a quotient of Hopf-Ore extension of
its coradical $H_0=kG$, where $G=G(H)$.
Precisely, there exist a $\chi \in \widehat{G}$, $a\in Z(G)$ and $\a \in Z_{\chi}^1(G)$
such that $H\cong kG(\chi ,a, \delta)/I$ as Hopf algebras, where the
derivation $\delta$ is defined by $\delta(g)=g(1-a)\a(g)$, $g\in G$, $I$ is a Hopf ideal
of $kG(\chi ,a, \delta)$ defined as follows:\\
{\rm(a)} In the case of char$(k)=0$,\\
\mbox{\hspace{0.4cm} (i)} if $\chi(a)$ is a primitive $n$-th root of unity for some $n\>2$, then
$I=\langle x^n-\b(1-a^n)\rangle$ for
\mbox{\hspace{0.8cm}} some $\b\in k$;\\
\mbox{\hspace{0.3cm} (ii)} if $\chi(a)$ is not a primitive $n$-th root of unity for any $n\>2$,
then $I=0$.\\
{\rm(b)} In the case of char$(k)=p>0$,\\
\mbox{\hspace{0.4cm} (i)} if $\chi(a)$ is a primitive $n$-th root of unity for some $n\>2$, then
$I=\langle x^n-\b(1-a^n)\rangle$ for
\mbox{\hspace{0.8cm}} some $\b\in k$;\\
\mbox{\hspace{0.3cm} (ii)} if $\chi(a)=1$ and $a^{-1}xa=x$, then $I=\langle x^p-\b x-\gamma (1-a^p)\rangle$
for some $\b$, $\gamma \in k$. Moreover,
\mbox{\hspace{0.8cm}} $a^p=a$ if $\b\neq 0$;\\
\mbox{\hspace{0.2cm} (iii)} if $\chi(a)=1$ and $a^{-1}xa=x+\a(a)(1-a)$ with $\a(a)\neq 0$,
then by a suitable  substituting
\mbox{\hspace{0.8cm}} of $x$, $I=\langle x^p-x-\g(1-a^p)\rangle$ for some $\g\in k$;\\
\mbox{\hspace{0.2cm} (iv)} if $\chi(a)$ is not a primitive $n$-th root of unity for any $n\>1$,
$I=0$.
\end{theorem}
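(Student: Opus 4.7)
The plan is to construct an explicit surjective Hopf algebra map from a Hopf-Ore extension of $kG$ onto $H$ and then identify its kernel using the structural data from Propositions 2.5 and 2.6.

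First I would invoke Proposition 2.5(b),(c) to produce the triple $(\chi, a, \alpha)$ with $\chi \in \widehat{G}$, $a \in Z(G)$, and $\alpha \in Z_\chi^1(G)$, and then apply Lemma 1.5 to form the Hopf-Ore extension $\widetilde{H} := kG(\chi, a, \delta)$ with $\delta(g) = g(1-a)\alpha(g)$. Writing $y$ for the distinguished generator of $\widetilde{H}$ (so that $yg = \chi(g)gy + g(1-a)\alpha(g)$ and $\Delta(y) = y \otimes a + 1 \otimes y$), I would define $\pi: \widetilde{H} \to H$ as the algebra map extending $\pi|_{kG} = \mathrm{id}$ and $\pi(y) = x$. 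Well-definedness reduces to checking $xg = \chi(g)gx + g(1-a)\alpha(g)$ in $H$, which is exactly Proposition 2.5(c) rewritten using $a \in Z(G)$. Since $\widetilde{H}$ is generated by $kG$ and $y$, and $H$ is generated by $kG$ and $x$, the map $\pi$ is surjective; comparison of coproducts on the generators $G \cup \{y\}$ shows $\pi$ is a Hopf algebra map.

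Next I would compute $I := \ker \pi$ using freeness of both algebras over $kG$. The Hopf-Ore extension $\widetilde{H}$ is free over $kG$ with basis $\{y^i : i \geq 0\}$, and by Proposition 2.6, $H$ is free over $kG$ with basis either $\{1, x, x^2, \dots\}$ or $\{1, x, \dots, x^{n-1}\}$ for some $n \geq 2$. In the first scenario, $\pi$ is injective on each graded piece, so $I = 0$, and we are in cases (a)(ii) or (b)(iv). In the truncated scenario, Proposition 2.6 gives an explicit relation: $x^n = \beta(1-a^n)$ when $\chi(a)$ is a primitive $n$-th root of unity; $x^p = \beta x + \gamma(1-a^p)$ in case (b)(ii); and $x^p = x + \gamma(1-a^p)$ in case (b)(iii) after the indicated substitution. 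Setting $f := y^n - \beta(1-a^n)$ (or the analogous element in the other cases), one has $f \in I$, and I would show $I = \langle f \rangle$ by a standard division argument: any element of $\widetilde{H}$ is congruent modulo $\langle f \rangle$ to one with $y$-degree $< n$, and $\pi$ is injective on such elements because $\{1, x, \dots, x^{n-1}\}$ is a $kG$-basis of $H$.

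Finally I would verify that $\langle f \rangle$ is a Hopf ideal, i.e., $\epsilon(f) = 0$, $S(f) \in \langle f \rangle$, and $\Delta(f) \in \widetilde{H} \otimes \langle f \rangle + \langle f \rangle \otimes \widetilde{H}$. The counit and antipode computations are routine; the coproduct is the delicate step. In the primitive $n$-th root of unity cases, Lemma 1.6(a) gives
\begin{equation*}
\Delta(y^n) = y^n \otimes a^n + 1 \otimes y^n + \sum_{l=1}^{n-1} \binom{n}{l}_{\chi(a)} y^{n-l} \otimes a^{n-l} y^l,
\end{equation*}
and the $q$-binomial coefficients $\binom{n}{l}_{\chi(a)}$ all vanish for $1 \leq l \leq n-1$ (see \cite[Corollary 2]{Ra}), so $f = y^n - \beta(1-a^n)$ is $(a^n, 1)$-primitive and $\langle f \rangle$ is a Hopf ideal. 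In case (b)(ii) a direct char $p$ computation handles $y^p - \beta y - \gamma(1-a^p)$, and case (b)(iii) is handled via Lemma 1.6(b), which shows $y^p - y$ is $(a^p, 1)$-primitive. The condition $a^p = a$ when $\beta \neq 0$ in (b)(ii) comes from matching the primitivity type of $y^p - \beta y$ with $\gamma(1-a^p)$.

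The main obstacle is the final step: verifying in each subcase that the single relation we have identified indeed generates a Hopf ideal. This hinges on exactly the $q$-binomial and char-$p$ coproduct identities isolated in Lemma 1.6, together with Corollary 2 of \cite{Ra}, so once the generator is correctly pinned down using Proposition 2.6, the Hopf-ideal verification is essentially forced. A secondary subtlety is the substitution of $x$ in case (b)(iii), where one must carry the substitution consistently through the definition of $\alpha$ so that Lemma 1.5 still applies to the modified generator.
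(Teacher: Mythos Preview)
Your approach is essentially the same as the paper's: construct a surjective Hopf algebra map $\pi$ from $kG(\chi,a,\delta)$ onto $H$ using Propositions 2.5 and 2.6, and identify the kernel by comparing $kG$-bases. The one notable difference is your final step. You identify what you call the ``main obstacle'' as verifying directly, via Lemma 1.6 and $q$-binomial identities, that $\langle f\rangle$ is a Hopf ideal. The paper does not do this, and it is unnecessary: once you know $\pi$ is a Hopf algebra epimorphism and $I=\ker\pi=\langle f\rangle$, the Hopf ideal property is automatic. So your last paragraph is redundant work, and the obstacle you flag is not an obstacle at all. Otherwise the argument is correct and matches the paper.
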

\begin{proof}
Let $G=G(H)$. Then $H_0=kG$. By \leref{2.4}, there exist some $a\in G$ and $x\in H_1\setminus H_0$
such that $H_1=H_0 \op H_0x$ and $\D(x)=x\ot a+1\ot x$. Moreover, $H$ is generated by $x$ and $G$
as an algebra. By \prref{2.5}, $a\in Z(G)$, and there exist a character $\chi \in \widehat{G}$
and a $1$-cocycle $\a \in Z_{\chi}^1(G)$
such that $$g^{-1}xg=\chi (g)x+\a(g)(1-a),\quad g\in G.$$
Moreover, we can make the following assumption: $\a(g)=0$ for all $g\in G$
when $\chi(a)\neq 1$; $\a(a)=1$ when $\chi(a)=1$ and $\a(a)\neq 0$.
Then one can form a Hopf-Ore extension $kG(\chi ,a, \delta)$ of $kG$,
where $\delta\in{\rm End}_k(kG)$ is defined by $\delta(g)=g(1-a)\a(g)$ for all $g\in G$.
By the definition of Hopf-Ore extensions, there exists an algebra epimorphism
$F: kG(\chi ,a, \delta)\ra H$ such that $F(x)=x$ and $F(g)=g$ for all $g\in G$. It is
easy to see that $F$ is also a coalgebra morphism. Consequently, $F$ is a Hopf algebra epimorphism.
Note that $kG(\chi ,a, \delta)$ is a free left $kG$-module with the $kG$-basis $\{x^i|i\>0\}$.
Obviously, $F$ is a $kG$-module homomorphism.
Let $I={\rm Ker}(F)$. Then $I$ is a Hopf ideal of $kG(\chi ,a, \delta)$, and
$H\cong kG(\chi ,a, \delta)/I$ as Hopf algebras.

{ \rm (a)(i)} Assume that $\chi(a)$ is a primitive $n$-th root of unity for some $n\>2$.
Then $\D(x^n)=x^n\ot a^n+1\ot x^n$ in $H$, and hence $x^n\in H_1=H_0\op H_0x$.
It follows from  \prref{2.6} that $H$ is a free left
$H_0$-module with a basis $\{1, x, x^2, \cdots, x^{m-1}\}$
for some $2\<m\<n$. By \prref{2.6}(a),
$\chi(a)$ is a primitive $m$-th root of unity. Hence $m=n$ and
$H$ has a left $H_0$-basis $\{1, x, x^2, \cdots, x^{n-1}\}$. Moreover,
$x^n=\b(1-a^n)$ in $H$ for some $\b\in k$. Hence $I':=\langle x^n-\b(1-a^n)\rangle\subseteq I$, and so
$F$ induces an algebra epimorphism $\widetilde{F}: kG(\chi ,a, \delta)/I'\ra H$,
which is also a $kG$-module homomorphism. Obviously, $kG(\chi ,a, \delta)/I'$
is generated as a $kG$-module by $\{x^i+I'|0\<i\<n-1\}$.
Since $\{x^i|0\<i\<n-1\}$ is a $kG$-basis of $H$ and $\widetilde{F}(x^i+I')=x^i$,
$\{x^i+I'|0\<i\<n-1\}$ is a $kG$-basis of $kG(\chi ,a, \delta)/I'$.
It follows that $\widetilde{F}$ is an isomorphism and so $I=\langle x^n-\b(1-a^n)\rangle$.

{ \rm (a)(ii)}  If $\chi(a)$ is not a primitive $n$-th root of unity for any $n\>2$,
then $H$ is a free left
$H_0$-module with the basis $\{1, x, x^2, \cdots \}$ by \prref{2.6}.
Obviously, $F$ is an isomorphism in this case. Hence $I=0$ and $H\cong kG(\chi ,a, \delta)$
as Hopf algebras.

{ \rm (b)} It is similar to Part (a) using \prref{2.6}. In this case, the formulas for $I$ also follow
from \cite[Theorem 3.11]{Sc}.
\end{proof}

Note that the construction of the datum $\{\chi, a, \d\}$ associated to $H$ of {\bf type 1}
is given in \cite[Lemma 2.8]{Sc}. In the following,
let $|\chi|$ denote the order of the character $\chi$.

\begin{proposition}\prlabel{2.8}
With the notations in \thref{2.7},
assume that $\chi(a)$ is a primitive $n$-th root of unity for some $n\>2$. Then we have\\
{ \rm (a)} $n\<|\chi|$.\\
{ \rm (b)} If $|\chi|<\oo$, then $n||\chi|$.\\
{ \rm (c)} If $n<|\chi|$, then $\b=0$ or $a^n=1$, and consequently $I=\langle x^n\rangle$.
\end{proposition}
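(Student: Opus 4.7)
The plan is to treat the three parts in the order (b), (a), (c), since (a) is a direct consequence of (b) and (c) rests on the relation $x^n = \beta(1-a^n)$ in $H$ together with the commutation rule already available in the lifted Hopf-Ore extension.

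For (b), I would invoke the definition of the order of a character: if $|\chi| = m < \infty$, then $\chi^m$ is the trivial character, so $\chi(g)^m = 1$ for every $g \in G$. Specialising to $g = a$ gives $\chi(a)^m = 1$, and since $\chi(a)$ has exact multiplicative order $n$ (being a primitive $n$-th root of unity), we conclude $n \mid m = |\chi|$. Part (a) is then immediate: when $|\chi| = \infty$ the inequality $n \leq |\chi|$ is vacuous, and when $|\chi|$ is finite it follows from the divisibility just proven.

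For (c), I would exploit the substitution of $x$ made for Type 1 in the proof of \thref{2.7}: since $\chi(a) \neq 1$, one may assume $\alpha(g) = 0$ for all $g$, so that $\delta(g) = 0$ and hence in $kG(\chi,a,\delta)$ one has $g^{-1}xg = \chi(g)x$ for every $g \in G$. This relation descends to $H$, giving $g^{-1}x^n g = \chi(g)^n x^n$. On the other hand $a \in Z(G)$, so $g^{-1}\beta(1-a^n)g = \beta(1-a^n)$. Combining these with $x^n = \beta(1-a^n)$ in $H$ yields
\begin{equation*}
(\chi(g)^n - 1)\,\beta(1-a^n) = 0 \quad\text{in } H, \text{ for all } g \in G.
\end{equation*}
Now if $n < |\chi|$, then $\chi^n$ is a nontrivial character of $G$, so there exists $g_0 \in G$ with $\chi(g_0)^n \neq 1$. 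The displayed equation then forces $\beta(1-a^n) = 0$ as an element of $kG \subseteq H$. Since $\{1, a^n\}$ is linearly independent in $kG$ precisely when $a^n \neq 1$, we conclude that either $\beta = 0$ or $a^n = 1$. In both cases $\beta(1-a^n) = 0$, and therefore $I = \langle x^n - \beta(1-a^n)\rangle = \langle x^n\rangle$.

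The only potential pitfall I anticipate is ensuring the conjugation identity $g^{-1}xg = \chi(g)x$ is available in the exact presentation used in \thref{2.7}; this requires recalling that for Type 1 (which is our situation, as $\chi(a) \neq 1$) the generator $x$ has been normalised so that the $\alpha$-term vanishes. Once that point is verified, the argument is essentially a one-line computation using centrality of $a$ and the linear independence of distinct group elements in $kG$.
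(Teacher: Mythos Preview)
Your proof is correct and follows essentially the same approach as the paper: both arguments exploit the conjugation relation $g^{-1}x^n g = \chi(g)^n x^n$ together with the existence of some $g$ with $\chi(g)^n \neq 1$ to force $\beta(1-a^n)=0$. The only cosmetic difference is that the paper carries out the computation inside the ideal $I$ of the Ore extension (showing first $x^n \in I$, then $\beta(1-a^n)\in I$, then invoking $G=G(H)$), whereas you work directly in the quotient $H$ and use the embedding $kG \subseteq H$; the underlying idea is identical.
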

\begin{proof}
Parts (a) and (b) are obvious. For Part (c), we may assume that $g^{-1}xg=\chi(g)x$
for all $g\in G$ as stated in the proof of \thref{2.7}. Since $n<|\chi|$, there exists some $g\in G$
such that $\chi^n(g)\neq 1$. Then $x^n\in I$ since $x^n-\b(1-a^n)\in I$ and
$\chi^n(g)x^n-\b(1-a^n)=g^{-1}(x^n-\b(1-a^n))g\in I$.
Hence $I=\langle x^n-\b(1-a^n)\rangle=\langle x^n, \b(1-a^n)\rangle$.
If $\b\neq 0$, then $1-a^n\in I$, and hence $a^n=1$ in $H=kG(\chi, a, \d)/I$.
Since $G=G(H)$, $a^n=1$ in $G\subset kG(\chi, a, \d)$.
This implies $\b(1-a^n)=0$, and so $I=\langle x^n\rangle$.
\end{proof}

\begin{corollary}\colabel{2.9}
With the notations in \thref{2.7},
assume that $\chi(a)$ is a primitive $n$-th root of unity for some $n\>2$. Then we have\\
{  \rm (a)} If $\b=0$ or $a^n=1$, then $I=\langle x^n \rangle$.\\
{  \rm (b)} If $\b\neq 0$ and $a^n\neq 1$, then $|\chi|=n$ and $I=\langle x^n-\b(1-a^n)\rangle$.
\end{corollary}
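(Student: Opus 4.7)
The plan is to derive this corollary as an almost immediate consequence of \thref{2.7} and \prref{2.8}, since the hard work (establishing the form of $I$ and the inequality $n\<|\chi|$ with the dichotomy for $n<|\chi|$) has already been done there.

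For Part (a), I would simply observe that if either $\b=0$ or $a^n=1$, then the element $\b(1-a^n)$ is zero, so $x^n-\b(1-a^n)=x^n$, and hence $I=\langle x^n-\b(1-a^n)\rangle=\langle x^n\rangle$ directly from the formula in \thref{2.7}.

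For Part (b), I would argue by contrapositive using \prref{2.8}. Assume $\b\neq 0$ and $a^n\neq 1$. By \prref{2.8}(c), the condition $n<|\chi|$ would force $\b=0$ or $a^n=1$, contradicting our assumption. Hence $n\not<|\chi|$, which combined with \prref{2.8}(a) (giving $n\<|\chi|$) yields $|\chi|=n$. The formula $I=\langle x^n-\b(1-a^n)\rangle$ is then exactly what \thref{2.7}(a)(i) or (b)(i) supplies in this case.

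The only subtle point — and it is not really an obstacle, just a bookkeeping issue — is to confirm that both characteristic cases (char$(k)=0$ and char$(k)=p>0$) are handled simultaneously; but since \thref{2.7}(a)(i) and \thref{2.7}(b)(i) produce the same form of $I$ whenever $\chi(a)$ is a primitive $n$-th root of unity with $n\>2$, and since \prref{2.8} is stated uniformly under this hypothesis, no separate case analysis is required. Thus the corollary follows in a few lines.
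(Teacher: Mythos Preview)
Your proposal is correct and takes essentially the same approach as the paper, which simply states that the corollary follows from \thref{2.7} and \prref{2.8}. You have spelled out exactly the intended two-line derivation: Part (a) is the trivial observation that $\b(1-a^n)=0$ under either hypothesis, and Part (b) is the contrapositive of \prref{2.8}(c) combined with \prref{2.8}(a).
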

\begin{proof}
It follows from \thref{2.7} and \prref{2.8}.
\end{proof}
\begin{remark}
Let $G=G(H)$ and $H_0=kG$. When $\chi(a)$ is a primitive $n$-th root of unity for some $n\>2$,
$H=H_0+H_0x+H_0x^2+\cdots +H_0x^{n-1}$.
When char$k=p>0$ and $\chi(a)=1$,
$H=H_0+H_0x+H_0x^2+\cdots +H_0x^{p-1}$. In these cases,
if $H$ is infinite dimensional, then $G$ is an infinite group,
and $H$ has the similar structures and properties
with the finite dimensional pointed Hopf algebras of rank one classified
in \cite{KR,Sc} except that $G$ is infinite.
\end{remark}

\section{Simple weight modules}\selabel{3}

In this and next sections, we study the representation theory of Hopf-Ore extensions of group algebras and pointed
Hopf algebras of rank one. By the discussion
in \seref{2}, any pointed Hopf algebra of rank one is isomorphic to
a quotient of the Hopf-Ore extension of
its coradical.

In what follows, let $H=kG(\chi^{-1},a,\d)$ be a Hopf-Ore extension of
a group algebra $kG$. Then $H'=kG(\chi^{-1},a,\d)/I$ is a Hopf algebra of rank one,
where $I$ is a Hopf ideal of $H$ as described in \thref{2.7}. Moreover, every
pointed Hopf algebra of rank one has this form. Let $\pi: H\ra H'$
be the canonical epimorphism.

Let ${\mathcal M}$ (resp. ${\mathcal M}'$) denote the category
of the left $H$-modules (resp. $H'$-modules). Then both ${\mathcal M}$ and ${\mathcal M}'$
are monoidal categories. Since $H'=H/I$ is a quotient Hopf algebra of $H$,
there is a monoidal category embedding functor from ${\mathcal M}'$ to ${\mathcal M}$.
Thus, one can regards ${\mathcal M}'$ as a monoidal full subcategory of ${\mathcal M}$.
Moreover, an object $M$ in ${\mathcal M}$ is an object in the subcategory ${\mathcal M}'$
if and only if $I\cdot M=0$. We present the observation as a proposition below.

\begin{proposition}\prlabel{3.1}
${\mathcal M}'$ is a monoidal full subcategory of ${\mathcal M}$ consisting of all
those $H$-modules $M$ such that $I\cdot M=0$.
\end{proposition}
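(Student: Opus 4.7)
The plan is to unpack the three assertions of the proposition separately: (i) the pullback along $\pi:H\ra H'$ realises $\mathcal{M}'$ as a full subcategory of $\mathcal{M}$, (ii) the image of this embedding is exactly $\{M\in\mathcal{M}\mid I\cdot M=0\}$, and (iii) the embedding respects the monoidal structure and the subcategory is closed under tensor products in $\mathcal{M}$.

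First I would define the pullback functor $\pi^{*}:\mathcal{M}'\ra\mathcal{M}$ which sends an $H'$-module $(M',\cdot')$ to the $H$-module with action $h\cd m:=\pi(h)\cd' m$, and which is the identity on morphisms. Since $\pi$ is surjective, the condition ``$f$ is $H$-linear for pullback actions'' is equivalent to ``$f$ is $H'$-linear'', which shows $\pi^{*}$ is both full and faithful. For the characterisation of its image, if $M=\pi^{*}(M')$ then $I=\ker(\pi)$ forces $I\cd M=0$; conversely, if $M\in\mathcal{M}$ satisfies $I\cd M=0$, then the structure map $H\ot M\ra M$ vanishes on $I\ot M$ and hence factors through a unique linear map $H'\ot M\ra M$, which one checks is an $H'$-module structure with $\pi^{*}$-pullback equal to the original $H$-module. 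Thus we may identify $\mathcal{M}'$ with the full subcategory $\{M\in\mathcal{M}\mid I\cd M=0\}$.

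Next I would verify monoidality. Because $\pi$ is a Hopf algebra morphism, we have $(\pi\ot\pi)\circ\D_H=\D_{H'}\circ\pi$ and $\ep_{H'}\circ\pi=\ep_H$. For $M,N\in\mathcal{M}'$ and $h\in H$, this gives
\begin{equation*}
h\cd(m\ot n)=\sum h_1 m\ot h_2 n=\sum \pi(h)_1\cd' m\ot \pi(h)_2\cd' n=\pi(h)\cd'(m\ot n),
\end{equation*}
so the $H$-module structure on $M\ot N$ (computed through $\pi^{*}$) agrees with the $H'$-module structure. The same identity shows $\pi^{*}(k)=k$ as unit objects. It therefore remains to confirm that the subcategory is closed under the tensor product of $\mathcal{M}$, i.e. that $I\cd M=I\cd N=0$ implies $I\cd(M\ot N)=0$. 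This is the only step requiring a genuine structural input: since $I$ is a Hopf ideal of $H$, it is in particular a coideal, so $\D(I)\ss I\ot H+H\ot I$. For $y\in I$ and $m\ot n\in M\ot N$, the identity $y\cd(m\ot n)=\sum y_1 m\ot y_2 n$ then has every summand killed because either $y_1\in I$ annihilates $m$ or $y_2\in I$ annihilates $n$.

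The argument is essentially formal, and I do not expect a serious obstacle; the only point that is not purely tautological is the closure under tensor products, which reduces to the coideal property of $I$ recorded above. I would therefore write the proof in three short paragraphs matching the three bullets, citing \thref{2.7} only to recall that $I$ is a Hopf ideal of $H=kG(\chi^{-1},a,\d)$.
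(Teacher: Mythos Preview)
Your proposal is correct and proceeds exactly along the lines the paper has in mind; the paper in fact does not give a formal proof at all but records the statement as an ``observation'' following the sentence ``Since $H'=H/I$ is a quotient Hopf algebra of $H$, there is a monoidal category embedding functor from ${\mathcal M}'$ to ${\mathcal M}$.'' Your three paragraphs simply unpack this observation, and the one nontrivial point you isolate --- closure under tensor products via the coideal condition $\D(I)\subseteq I\ot H+H\ot I$ --- is precisely the content hidden in the phrase ``quotient Hopf algebra.''
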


From now on, we assume that $\chi^{-1}(a)\neq 1$, i.e. $H$ is of \textbf{Case 1} (or $H'$ is of \textbf{Type 1}).
In this case, $\chi$ is not the trivial character.
We may assume that $\d=0$ and $H=kG(\chi^{-1},a,0)$. That is, $H$ is generated, as an algebra,
by $G$ and $x$ subject to the
relation: $xg=\chi^{-1}(g)gx$, $g\in G$. The coalgebra structure and antipode are given
by
$$\begin{array}{lll}
\D(x)=x\ot a+1\ot x,& \ep(x)=0,& S(x)=-xa^{-1},\\
\D(g)=g\ot g,& \ep(g)=1,& S(g)=g^{-1},\\
\end{array}$$
where $g\in G$.
$H$ has a $k$-basis $\{gx^i \mid g\in G,i\>0\}$.
Let $H(n)={\rm span}\{gx^n\mid g\in G\}$ for all $n\>0$. Then $H=\oplus_{n=0}^{\oo}H(n)$ is a
graded coalgebra, and $H(0)=H_0=kG$ is the coradical of $H$ by \prref{2.1}.
Obviously, $H$ is also a graded algebra and $S(H(n))\subseteq H(n)$ for all $n\>0$.
Thus, $H$ is a graded Hopf algebra (see \cite{Mo} for the definition of graded Hopf algebra).

Let $V$ be an $H$-module. Then $V$ becomes an $H_0$-module by the restriction. Conversely,
for an $H_0$-module $W$, $W$ becomes an $H$-module by setting $x\cdot w=0$, $w\in W$.
Moreover, $W$ is a graded $H$-module with the trivial grading given by
$W_{(0)}=W$ and $W_{(i)}=0$ for $i>0$.

Let $V=\op_{i=0}^{\oo}V_{(i)}$ be a graded $H$-module.
Then $g\cdot V_{(i)}=V_{(i)}$ and $x\cdot V_{(i)}\subseteq V_{(i+1)}$
for all $g\in G$ and $i\>0$.
Let $V_{[n]}=V_{(n)}\op V_{(n+1)}\op V_{(n+2)}\op \cdots=\op _{i=n}^{\oo}V_{(i)}$. Then
$V=V_{[0]}\supseteq V_{[1]}\supseteq V_{[2]}\supseteq \cdots$
is a descending chain of graded $H$-submodules.
If $V$ is simple, then there exists some $n\>0$ such that
$V=V_{[0]}=V_{[1]}=V_{[2]}=\cdots =V_{[n]}=V_{(n)}$,
and $V_{[n+1]}=0$. Hence $x\cdot V=0$ in this case. Thus, we have
the following proposition.

\begin{proposition}\prlabel{3.2}
Let $V$ be a graded $H$-module. Then $V$ is simple as an $H$-module
if and only if $V$ is simple as an $H_0$-module.
\end{proposition}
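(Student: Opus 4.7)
The plan is to argue both implications by reducing everything to the observation that a graded simple $H$-module must be concentrated in a single degree, on which $x$ then necessarily acts as zero; once that reduction is made, $H$-submodules and $H_0$-submodules coincide.

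First, for the direction ``simple as $H$-module $\Rightarrow$ simple as $H_0$-module'', I would reuse the descending chain $V=V_{[0]}\supseteq V_{[1]}\supseteq V_{[2]}\supseteq\cdots$ of graded $H$-submodules built in the paragraph preceding the proposition. By simplicity of $V$, each $V_{[i]}$ is either $V$ or $0$, so there exists $n\>0$ with $V=V_{[n]}=V_{(n)}$ and $V_{[n+1]}=0$. Because $x\cdot V_{(n)}\subseteq V_{(n+1)}\subseteq V_{[n+1]}=0$, the element $x$ acts trivially on $V$. Now any $kG$-submodule $W\subseteq V$ automatically satisfies $x\cdot W=0\subseteq W$ and is stable under $G$, so $W$ is an $H$-submodule. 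Simplicity of $V$ as an $H$-module therefore forces $W\in\{0,V\}$, proving that $V$ is simple as an $H_0$-module.

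Second, for the direction ``simple as $H_0$-module $\Rightarrow$ simple as $H$-module'', I would use the fact that $g\cdot V_{(i)}=V_{(i)}$ makes each homogeneous component $V_{(i)}$ a $kG$-submodule of $V$. Thus the decomposition $V=\oplus_{i\>0}V_{(i)}$ is a decomposition of $V$ into $H_0$-submodules. Since $V$ is simple over $H_0$, exactly one $V_{(i_0)}$ is nonzero, giving $V=V_{(i_0)}$. Then $x\cdot V\subseteq V_{(i_0+1)}=0$, so $x$ again acts trivially, and any $H$-submodule of $V$ is in particular a $kG$-submodule; by simplicity over $H_0$ it must be $0$ or $V$. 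Hence $V$ is simple as an $H$-module.

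There is no real obstacle here: the two key facts are purely formal, namely that a graded simple module is concentrated in one degree (so $x$ kills it for degree reasons) and that when $x$ acts as zero the lattices of $H$- and $H_0$-submodules coincide. The only point worth being careful about is that the descending chain argument does give a stabilization degree $n$ with $V=V_{(n)}$, which is exactly the content of the observation displayed in the excerpt just above the proposition statement.
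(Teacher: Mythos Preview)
Your proof is correct and follows essentially the same approach as the paper: for the nontrivial forward direction you use the descending chain $V_{[n]}$ to conclude $x\cdot V=0$ and then identify $H$-submodules with $H_0$-submodules, exactly as the paper does. For the converse the paper simply says ``obvious''; your argument is fine but more elaborate than needed, since the implication ``simple over $H_0$ $\Rightarrow$ simple over $H$'' holds for any $H$-module (graded or not) just because every $H$-submodule is a fortiori an $H_0$-submodule---the detour through $x\cdot V=0$ is unnecessary there.
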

\begin{proof}
Assume that $V$ is simple as an $H$-module. Then $x\cdot V=0$
from the discussion above. Hence each $H_0$-submodule
of $V$ is an $H$-submodule. It follows that
$V$ is also simple as an $H_0$-module. The converse is obvious.
\end{proof}

Let $M\in$ ${\mathcal M}$. For any $\l \in \widehat{G}$,
let $M_{(\l)}=\{v\in M\mid g\cdot v=\l (g)v,\, g\in G\}$.
Each nonzero element in $M_{(\l)}$ is called a {\it weight vector of weight $\l$} in $M$.
One can check that $\op _{\l \in \widehat{G}} M_{(\l)}$
is a submodule of $M$.
Let $\Pi(M)=\{\l \in \widehat{G}\mid M_{(\l)}\neq 0\}$, which is
called the {\it weight space} of $M$.
$M$ is said to be a {\it weight module} if $M=\op _{\l\in \Pi(M)}
M_{(\l)}$. Note that submodules and quotient modules of a weight module are both weight modules.

Let $\mathcal W$ be the full subcategory of ${\mathcal M}$ consisting of all weight modules.
Similarly, one can define weight modules over $H'$. Let $\mathcal W'$ be the full subcategory
of ${\mathcal M}'$ consisting of all weight modules.
Then one can easily see that $\mathcal W$ (resp. $\mathcal W'$) is a monoidal full subcategory of
${\mathcal M}$ (resp. ${\mathcal M}'$). Moreover, $\mathcal W'={\mathcal M}'\cap\mathcal W$.

For any $\l \in \widehat{G}$, let $V_{\l}$ be the $1$-dimensional $H$-module defined by
$$g\cdot v=\l(g)v \mbox{ and } x\cdot v=0,\, v\in V_{\l}.$$
The following lemma is obvious.

\begin{lemma}\lelabel{3.3}
Let $\sigma ,\l \in \widehat{G}$. Then\\
{\rm (a)} $V_{\l}$ is a simple weight $H$-module.\\
{\rm (b)} $V_{\sigma}\cong V_{\l}$ if and only if $\sigma=\l$.\\
%{\rm (c)} $V_{\sigma}\otimes V_{\l}\cong V_{\sigma\l}$.
\end{lemma}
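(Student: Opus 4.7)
The plan is that this lemma is essentially a direct unpacking of the definitions of $V_\l$ and of a weight module, so the proof should be short and I would handle (a) and (b) in turn.

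For part (a), I would first observe that $V_\l$ is one-dimensional over $k$, so as an $H$-module it is automatically simple since it has no proper nonzero $k$-subspaces, let alone $H$-submodules. To see that it is a weight module, I would note that every nonzero $v\in V_\l$ satisfies $g\cdot v=\l(g)v$ for all $g\in G$ by definition, so $V_\l=(V_\l)_{(\l)}$, whence $\Pi(V_\l)=\{\l\}$ and $V_\l=\oplus_{\mu\in\Pi(V_\l)}(V_\l)_{(\mu)}$.

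For part (b), the ``if'' direction is trivial since $\s=\l$ forces $V_\s$ and $V_\l$ to be the very same $H$-module by construction. For the ``only if'' direction, suppose $f\colon V_\s\to V_\l$ is an $H$-module isomorphism and let $0\neq v\in V_\s$. Then $f(v)$ is a nonzero element of $V_\l$, and the $H$-linearity of $f$ together with the defining formulas gives
\begin{equation*}
\l(g)f(v)=g\cdot f(v)=f(g\cdot v)=f(\s(g)v)=\s(g)f(v)
\end{equation*}
for every $g\in G$. Since $f(v)\neq 0$ and $k$ is a field, this yields $\l(g)=\s(g)$ for all $g\in G$, i.e., $\l=\s$.

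There is no real obstacle here; the entire argument reduces to the two observations that a one-dimensional module is simple and that an isomorphism of weight modules must match weights. The only mild point worth noting is that the $x$-action on $V_\l$ and $V_\s$ is automatically compatible with any $k$-linear isomorphism (both are zero), so the comparison of characters on $G$ is the only constraint arising from $H$-linearity.
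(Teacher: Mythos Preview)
Your proof is correct and is exactly the direct verification from the definitions that the paper has in mind; the paper itself states the lemma as ``obvious'' and gives no further argument.
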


Let $\l\in \widehat{G}$. Then one can define a Verma module $M(\l)=H\ot _{H_0}V_{\l}$.
Note that $H$ is a free right $H_0$-module and
$H=\oplus_{i=0}^{\oo}(x^iH_0)$ with $x^iH_0\cong H_0$ as right $H_0$-modules. Hence
$$M(\l)
=\oplus_{i=0}^{\oo}(x^iH_0)\ot_{H_0}V_{\l}=\oplus_{i=0}^{\oo}x^i\ot_{H_0}V_{\l}$$
as $k$-spaces.
Fix a nonzero element $v\in V_{\l}$, and let $v_{\l}:=1\ot_{H_0}v$. Then
$x^i\ot_{H_0}v=x^i\cdot(1\ot _{H_0}v)=x^i\cdot v_{\l}$.
Hence $M(\l)$ has a $k$-basis $\{x^i\cdot v_{\l}\mid i\>0\}$,
and so $M(\l)$ is a free $k[x]$-module of rank one. Moreover, $M(\l)=k[x]\cdot v_{\l}$,
where $k[x]$ is the subalgebra of $H$ generated by $x$, which is a polynomial algebra
in one variable $x$ over $k$.

\begin{proposition}\prlabel{3.4}
Let $\l, \tau \in \widehat{G}$. Then\\
{\rm (a)} $M(\l)$ is an indecomposable weight module.\\
{\rm (b)} $x\cdot M(\l)$ is a maximal submodule of $M(\l)$ and $M(\l)/(x\cdot M(\l))\cong V_{\l}$.\\
{\rm (c)} $M(\l)\cong M(\tau)$ if and only if $\l=\tau$.\\
\end{proposition}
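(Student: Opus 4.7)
The plan is to first compute the weight decomposition of $M(\l)$ from the twisting rule between $G$ and $x$, then identify $x\cdot M(\l)$ as a canonical maximal submodule whose quotient is $V_\l$, and finally deduce parts (a) and (c) from this structural input.

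For the weight structure, the defining relation $xg = \chi^{-1}(g)gx$ rearranges to $gx = \chi(g)xg$, so by induction $gx^i = \chi^i(g)\,x^i g$. Applying this to $v_\l$ yields $g\cdot(x^i v_\l) = (\l\chi^i)(g)\,x^i v_\l$, so each $x^i v_\l$ is a weight vector of weight $\l\chi^i$, and $M(\l) = \bigoplus_{i\geq 0} M(\l)_{(\l\chi^i)}$ is a weight module. For (b) I next check that $N := x\cdot M(\l) = \bigoplus_{i\geq 1} k x^i v_\l$ is actually an $H$-submodule: it is stable under left multiplication by $x$ by construction, while for $g\in G$ the identity $g\cdot(xm) = \chi(g)\,x\cdot(gm)$ shows $G$-stability. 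The quotient $M(\l)/N$ is one dimensional, spanned by the image of $v_\l$, with $G$ acting through $\l$ and $x$ acting as zero; thus $M(\l)/N\cong V_\l$, and simplicity of $V_\l$ makes $N$ maximal.

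For (a), suppose $M(\l) = M_1\oplus M_2$ is a decomposition into $H$-submodules. Since each $M_i$ is $x$-stable, $xM(\l) = xM_1\oplus xM_2$, so by (b) one obtains
\[ V_\l \;\cong\; M(\l)/xM(\l) \;\cong\; M_1/xM_1 \,\oplus\, M_2/xM_2. \]
Simplicity of $V_\l$ forces one summand, say $M_2/xM_2$, to vanish, i.e.\ $M_2 = xM_2$. Iterating gives $M_2 = x^n M_2 \subseteq x^n M(\l) = \bigoplus_{i\geq n} k x^i v_\l$ for every $n\geq 1$, and since every element of $M(\l)$ is a finite linear combination of the $x^i v_\l$, we conclude $M_2 = 0$. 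For (c) one direction is trivial; conversely, any $H$-isomorphism $M(\l)\cong M(\tau)$ carries $xM(\l)$ onto $xM(\tau)$ and so induces $V_\l\cong V_\tau$ via (b), whence $\l = \tau$ by \leref{3.3}. I do not anticipate any genuine obstacle; the only point requiring care is verifying that $x\cdot M(\l)$ is $G$-stable, which reduces entirely to the twisting identity $gx = \chi(g)xg$.
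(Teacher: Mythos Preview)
Your proof is correct. Parts (b) and (c) match the paper's argument (the paper simply declares (b) ``obvious'' and for (c) passes to the quotient by $x\cdot M(\l)$ and invokes \leref{3.3}(b), exactly as you do). Part (a) takes a genuinely different route. The paper argues via idempotent endomorphisms: for $P\in\End_H(M(\l))$ with $P^2=P$, write $P(v_\l)=\sum_{i=0}^m\b_ix^i\cdot v_\l$; since $P$ commutes with multiplication by $x$, the relation $P^2(v_\l)=P(v_\l)$ becomes $(\sum_i\b_iy^i)^2=\sum_i\b_iy^i$ in the integral domain $k[y]$, forcing $P(v_\l)\in\{0,v_\l\}$ and hence $P\in\{0,{\rm id}\}$. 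Your argument instead leverages part (b): from a decomposition $M(\l)=M_1\oplus M_2$ one obtains $V_\l\cong M_1/xM_1\oplus M_2/xM_2$, so simplicity forces, say, $M_2=xM_2$, and then the explicit $\mathbb N$-grading gives $M_2\subseteq\bigcap_{n\>1}x^nM(\l)=0$. Both are short and sound; the paper's version is a direct idempotent obstruction via the polynomial ring, while yours is a Nakayama-style descent that ties indecomposability more visibly to the structural content of (b).
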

\begin{proof}
(a) Since $\{x^i\cdot v_{\l}\mid i\>0\}$ is a
basis of $M(\l)$ and each $x^i\cdot v_{\l}$ is a weight vector,
$M(\l)$ is a weight module. Now suppose
that $P$ is a module endomorphism of $M(\l)$ with $P^2=P$.
If $P(v_{\l})=\sum_{i=0}^m\b_ix^i\cdot v_{\l}$ for some $\b_i\in k$, then from
$P^2(v_{\l})=P(v_{\l})$, one gets that $P(v_{\l})=\b_0 v_{\l}$ with $\b_0^2=\b_0$.
Thus, Im$(P)=M(\l)$ or Im$(P)=0$. Hence $M(\l)$ is indecomposable.

(b) It is obvious.

(c) If $\l=\tau$, then obviously $M(\l)\cong M(\tau)$. Conversely,
assume that $f: M(\l)\ra M(\tau)$ is an $H$-module isomorphism. Then
$f(x\cdot M(\l))=x\cdot f(M(\l))=x\cdot M(\tau)$. Hence $f$ induces
an $H$-module isomorphism $\overline{f}: M(\l)/(x\cdot M(\l))\ra M(\tau)/(x\cdot M(\tau))$.
Thus, it follows from Part (b) and \leref{3.3}(b) that $\l=\tau$.
\end{proof}

\begin{lemma}\lelabel{3.5}
Let $M\in$ ${\mathcal M}$ and $\l\in \widehat{G}$. If $v$ is a weight vector
of weight $\l$ in $M$, then there exists an $H$-module
map $\phi$ from $M(\l)$ to $M$ such that $\phi(v_{\l})=v$. Furthermore, if $M=H\cdot v$
then $\phi$ is an epimorphism.
\end{lemma}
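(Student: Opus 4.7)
The plan is to invoke the universal property of the induced module (Frobenius reciprocity) that is implicit in the construction $M(\l)=H\ot_{H_0}V_\l$. Since $v$ is a weight vector of weight $\l$, the one-dimensional subspace $k\cdot v\ss M$ is a left $H_0$-submodule on which $G$ acts via $\l$, so the assignment $v_0\mapsto v$ (for a fixed generator $v_0$ of $V_\l$) defines an $H_0$-linear map $\psi:V_\l\ra M$.

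Next, I would extend $\psi$ to an $H$-linear map $\phi:M(\l)\ra M$ by the rule
\begin{equation*}
\phi(h\ot_{H_0}w)=h\cd\psi(w),\quad h\in H,\ w\in V_\l.
\end{equation*}
Well-definedness of $\phi$ over the tensor product $\ot_{H_0}$ requires that $\phi(ha_0\ot_{H_0}w)=\phi(h\ot_{H_0}a_0\cd w)$ for every $a_0\in H_0$, which is immediate from $\psi$ being $H_0$-linear and from associativity of the $H$-action on $M$. It is then routine that $\phi$ is $H$-linear (it intertwines left multiplication by $H$ on the first tensor factor with the $H$-action on $M$), and by construction $\phi(v_\l)=\phi(1\ot_{H_0}v_0)=\psi(v_0)=v$.

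For the last assertion, assume $M=H\cd v$. Since $\phi$ is $H$-linear and sends $v_\l$ to $v$, its image contains $H\cd v=M$, so $\phi$ is surjective.

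I do not expect any real obstacle: the lemma is the standard Frobenius reciprocity statement for the induction functor $H\ot_{H_0}-$, and the only thing to verify is the $H_0$-balancing of the map defined on the tensor product, which is built into the fact that $v$ is a weight vector of the correct weight. No use of the specific relation $xg=\chi^{-1}(g)gx$ or of the grading of $H$ is needed here; the argument works for any induced module construction.
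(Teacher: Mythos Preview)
Your argument is correct and is essentially the same as the paper's: both construct $\phi$ by sending the generator $v_{\l}$ to $v$ and extending $H$-linearly. The only difference is packaging---the paper works concretely with the explicit $k$-basis $\{x^i\cdot v_{\l}\mid i\geq 0\}$ of $M(\l)$, defining $\phi(x^i\cdot v_{\l})=x^i\cdot v$ and then asserting that this is an $H$-module map, whereas you invoke the universal property of the induced module $H\otimes_{H_0}V_{\l}$ directly. Your abstract formulation is arguably cleaner, since the $H$-linearity of $\phi$ is automatic from the balancing condition and no basis computation is needed; the paper's version leaves the verification of $H$-linearity (compatibility with the $G$-action and the $x$-action) as an implicit check.
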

\begin{proof}
Assume that $v$ is a weight vector of weight $\l$ in $M$. Since $\{x^i\cdot v_{\l}|i\>0\}$
is a $k$-basis of $M(\l)$, one can define a linear map $\phi: M(\l)\ra M$
by $\phi(x^i\cdot v_{\l})=x^i\cdot v$, $i\>0$. Then it is easy to see
that $\phi$ is an $H$-module map from $M(\l)$ to $M$ satisfying $\phi(v_{\l})=v$.
Furthermore, if $M=H\cdot v$, then $\phi$ is obviously an epimorphism.
\end{proof}

Let $\langle\chi\rangle$ denote the subgroup of
$\widehat{G}$ generated by $\chi$, and $[\l]$ denote the image of $\l$ under the canonical
epimorphism $\widehat{G}\ra\widehat{G}/\langle\chi\rangle$.

Let $k[y]$ be the polynomial algebra in one variable $y$ over $k$.
When $|\chi|=s<\infty$, for any $\l \in \widehat{G}$ and monic
polynomial $f(y)\in k[y]$ with $n:={\rm deg}(f(y))\>1$,
let $N_f(\l)$ be the submodule of $M(\l)$ generated by $f(x^s)\cdot v_{\l}$, and define
$V(\l, f):=M(\l)/N_f(\l)$ to be the corresponding quotient module.
Since $f(x^s)$ is a central element of $H$, $N_f(\l)=f(x^s)\cdot M(\l)$, and hence
${\rm dim}V(\l, f)=ns$. Let $m_i$ be the image of $x^i\cdot v_{\l}$ under the canonical
epimorphism $M(\l)\ra V(\l,f)$ for all $i\>0$. Then one can easily check that
$\{m_i|0\<i\<ns-1\}$ is a $k$-basis of $V(\l, f)$.
Assume that $f(y)=y^n-\sum_{j=0}^{n-1}\a_jy^j$ with $\a_j\in k$.
Then it is easy to check that the $H$-action on $V(\l, f)$ is determined by
$$ g\cdot m_i=\chi^i(g)\l(g)m_i,\ \ x\cdot m_i=\left\{\begin{array}{ll}
m_{i+1},&0\<i\<ns-2\\
\sum_{j=0}^{n-1}\a_j m_{js},&i=ns-1\\
\end{array}\right.,$$
where $0\<i\<ns-1$, $g\in G$. Obviously, $V(\l, f)$ is a weight module and
$\Pi(V(\l, f))=\{\chi^i\l|0\<i\<s-1\}$. Moreover, $m_i=x^i\cdot m_0$ for all $0\<i\<ns-1$,
and $\{m_{js+i}|0\<j\<n-1\}$ is a $k$-basis of $V(\l, f)_{(\chi^i\l)}$ for any $0\<i\<s-1$.

If $f(y)=y-\b$ for some $\b\in k$, then $n=1$, $x\cdot m_{s-1}=\b m_0$, and hence
$x^s\cdot m=\b m$ for all $m\in V(\l, f)$. Denote $V(\l, f)$ by $V(\l, \b)$
in this case.

\begin{proposition}\prlabel{3.6}
Assume $|\chi|=s<\infty$. Let $\l \in \widehat{G}$ and let $f(y)\in k[y]$ be a monic
polynomial with $n:={\rm deg}(f(y))\>1$. Then\\
{\rm (a)} $f(x^s)\cdot m=0$ for all $m\in V(\l, f)$.\\
{\rm (b)} If $f(y)$ is irreducible and $f(y)\neq y$, then $V(\l, f)$ is simple.
\end{proposition}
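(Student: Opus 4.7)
My plan is to prove (a) directly by a central-element argument and (b) via a B\'ezout-coprimality argument enabled by the invertibility of $x^s$.

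For (a), the key observation is that $|\chi|=s$ forces the relation $xg=\chi^{-1}(g)gx$ to iterate to $x^sg=gx^s$ for all $g\in G$, so $x^s$, and therefore $f(x^s)$, is central in $H$. Using $m_i=x^i\cd m_0$ for $0\<i\<ns-1$, the prescribed action $x\cd m_{ns-1}=\sum_{j=0}^{n-1}\a_j m_{js}$ translates to $f(x^s)\cd m_0=0$. Centrality then gives $f(x^s)\cd m_i = x^i f(x^s)\cd m_0=0$ for every $i$, and hence $f(x^s)\cd m=0$ for all $m\in V(\l,f)$.

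For (b), I would first exploit the hypothesis $f\ne y$: since $f$ is irreducible, $f\ne y$ is equivalent to $f(0)\ne 0$ (else $y$ divides $f$). Combined with $f(x^s)=0$ on $V(\l,f)$, this lets me solve for $1$ as $x^s$ times a polynomial in $x^s$, so $x^s$ acts invertibly on $V(\l,f)$ with $(x^s)^{-1}\in k[x^s]$. Consequently $x$ itself acts invertibly on $V(\l,f)$ via $x^{s-1}(x^s)^{-1}\in k[x]$, which I can use below as if it were an element of $H$.

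Now let $N$ be any nonzero submodule of $V(\l,f)$. Because $V(\l,f)$ is a weight module and the characters $\chi^0\l,\chi\l,\ldots,\chi^{s-1}\l$ are distinct (as $|\chi|=s$), $N$ contains a nonzero weight vector $v$ of weight $\chi^i\l$ for some $0\<i\<s-1$. The basis description of $V(\l,f)_{(\chi^i\l)}$ lets me write $v=p(x^s)\cd m_i$ with $p\in k[y]$ nonzero of degree $<n$. Irreducibility of $f$ forces $\gcd(p,f)=1$, so by B\'ezout there exists $u(y)\in k[y]$ with $u(x^s)p(x^s)\equiv 1\pmod{f(x^s)}$; applying $u(x^s)$ to $v$ and using (a) yields $m_i\in N$. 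The invertibility of $x$ on $V(\l,f)$ then lets me reach every $m_j$ ($0\<j\<ns-1$) by acting on $m_i$ by positive or negative powers of $x$, so $N=V(\l,f)$. The main subtlety is pinning down that $f\ne y$ is exactly the right hypothesis to give invertibility of $x^s$; after that the argument is standard B\'ezout plus weight-module bookkeeping.
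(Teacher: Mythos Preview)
Your proof is correct and follows essentially the same route as the paper's: part~(a) is immediate from centrality of $x^s$, and part~(b) uses $f(0)\ne0$ together with (a) to make $x$ act invertibly (the paper phrases this as injectivity, $x\cdot m=0\Rightarrow m=0$), then applies a B\'ezout argument to an arbitrary nonzero weight vector in a submodule $N$ to recover a basis vector $m_i$, and finally uses the $x$-action to reach all other $m_j$. The only cosmetic difference is that the paper first pushes the weight vector up to weight $\chi^{s-1}\l$ before running B\'ezout and then recovers $m_0$ via the relation $\a_0 m_0=x^{ns}\cdot m_0-\sum_{j\ge1}\a_j m_{js}$, whereas you run B\'ezout at the original weight and invoke the polynomial inverse of $x$ to move in both directions.
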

\begin{proof}
(a) It follows from $N_f(\l)=f(x^s)\cdot M(\l)$ and the definition of $V(\l, f)$.

(b) Assume that $f(y)=y^n-\sum_{j=0}^{n-1}\a_jy^j$ is irreducible and $f(y)\neq y$. Then $\a_0\neq 0$.
We claim that $x\cdot m\neq 0$ for any $0\neq m\in V(\l, f)$. In fact,
if $x\cdot m=0$ for some $m\in V(\l, f)$, then $0=f(x^s)\cdot m
=(x^{(n-1)s}-\sum_{1\<j\<n-1}\a_jy^{(j-1)s})x^s\cdot m-\a_0m=-\a_0m$ by Part (a).
It follows from $\a_0\neq 0$ that $m=0$. This shows the claim.

Let $M$ be a nonzero submodule $V(\l, f)$. Since $V(\l, f)$ is a weight module,
so is $M$. Let $m$ be a nonzero weight vector of $M$. Then there is an integer $i$ with
$0\<i\<s-1$ such that $m\in M_{(\chi^i\l)}$ since
$\Pi(M)\subseteq\Pi(V(\l, f))$.
Then $x^{s-1-i}\cdot m\neq 0$ and $x^{s-1-i}\cdot m\in M_{(\chi^{s-1}\l)}$.
Replacing $m$ by $x^{s-1-i}\cdot m$, we may assume that $m\in M_{(\chi^{s-1}\l)}$.
Since $M_{(\chi^{s-1}\l)}\subseteq V(\l, f)_{(\chi^{s-1}\l)}
={\rm span}\{m_{js+s-1} | 0\<j\<n-1\}$, we have
$m=\sum_{j=0}^{n-1}\b_j m_{js+s-1}$ for some $\b_j\in k$, $0\<j\<n-1$.
Hence $m=\sum_{j=0}^{n-1}\b_j(x^{js+s-1}\cdot m_0)=(\sum_{j=0}^{n-1}\b_jx^{js})\cdot(x^{s-1}\cdot m_0)$.
Let $f_1(y)=\sum_{j=0}^{n-1}\b_jy^j$. Then $f_1(y)$ is a nonzero polynomial
with ${\rm deg}(f_1(y))<n$, and $f_1(x^s)\cdot(x^{s-1}\cdot m_0)=m$. Since $f(y)$ is irreducible and ${\rm deg}(f(y))=n$,
$f_1(y)$ and $f(y)$ are coprime in $k[y]$. Hence there are $u(y), v(y)\in k[y]$ such that
$u(y)f_1(y)+v(y)f(y)=1$, and so $u(x^s)f_1(x^s)+v(x^s)f(x^s)=1$ in $H$.
Then by Part (a), we have
$x^{s-1}\cdot m_0=(u(x^s)f_1(x^s)+v(x^s)f(x^s))\cdot(x^{s-1}\cdot m_0)=u(x^s)\cdot m$.
Hence $x^{s-1}\cdot m_0\in M$ by $m\in M$, and so $x^j\cdot m_0\in M$
for all $j\>s-1$. Now from $f(x^s)\cdot m_0=0$, one gets that
$\a_0m_0=x^{ns}\cdot m_0-\sum_{1\<j\<n-1}\a_j(x^{js}\cdot m_0)\in M$, and hence $m_0\in M$
by $\a_0\neq 0$. It follows that $M=V(\l, f)$, and consequently, $V(\l, f)$ is simple.
\end{proof}

\begin{lemma}\lelabel{3.7}
Assume $|\chi|=s<\infty$. Let $\l_j \in \widehat{G}$, and let $f_j(y)\in k[y]$ be a monic
polynomial with $n_j:={\rm deg}(f_j(y))\>1$, $j=1, 2$. Then
$V(\l_1, f_1)\cong V(\l_2, f_2)$ if and only if one of the following two conditions is satisfied:\\
{\rm (a)} $[\l_1]=[\l_2]$ and $f_1(y)=f_2(y)$ with nonzero constant term when $\l_1\neq \l_2$.\\
{\rm (b)} $\l_1=\l_2$ and $f_1(y)=f_2(y)$.
\end{lemma}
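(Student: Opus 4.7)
The plan is to exploit the weight-space structure of $V(\l,f)$ together with its description as $k[x]/(f(x^s))$ as a $k[x]$-module, reducing the question of isomorphism to a coprimality statement in $k[x]$.

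For necessity, suppose $\phi: V(\l_1, f_1) \to V(\l_2, f_2)$ is an $H$-module isomorphism. Since $\phi$ preserves weight spaces, matching $\Pi(V(\l_j, f_j)) = \{\chi^i \l_j \mid 0\< i\< s-1\}$ forces $[\l_1] = [\l_2]$, and matching total dimensions gives $n_1 = n_2 =: n$. Writing $\l_2 = \chi^t \l_1$ with $0\< t\< s-1$, the $\l_1$-weight space of $V(\l_2, f_2)$ is spanned by $\{m_{js+i}^{(2)} \mid 0\< j\< n-1\}$, where $i \in \{0, \ldots, s-1\}$ is determined by $i\equiv -t \pmod s$; hence
\[
\phi(m_0^{(1)}) \;=\; x^i g(x^s) \cdot m_0^{(2)}
\]
for some $g(y) \in k[y]$ with $\deg g < n$. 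The key computation then happens inside $V(\l_2, f_2) \cong k[x]/(f_2(x^s))$: the identity $\phi(f_1(x^s) m_0^{(1)}) = 0$ becomes the divisibility $f_2(x^s) \mid x^i g(x^s) f_1(x^s)$ in $k[x]$, while the surjectivity of $\phi$ says that $x^i g(x^s)$ generates $k[x]/(f_2(x^s))$ as a $k[x]$-module, i.e. $\gcd(x^i g(x^s), f_2(x^s)) = 1$. Combining these forces $f_2(x^s) \mid f_1(x^s)$; both being monic of degree $ns$, we conclude $f_1 = f_2$. When $t \neq 0$ one has $i = s-t\>1$, so the coprimality additionally forces $f_2(0) \neq 0$, giving condition (a); the subcase $t = 0$ is exactly condition (b).

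For sufficiency, case (b) is trivial, and for case (a) with $f := f_1 = f_2$ I would set $v := x^{s-t} \cdot m_0^{(2)}$, a weight vector of weight $\l_1$ in $V(\l_2, f)$. Since $f(0) \neq 0$, $x^{s-t}$ is a unit in $k[x]/(f(x^s))$, so $v$ generates $V(\l_2, f)$ as an $H$-module; \leref{3.5} then produces an epimorphism $M(\l_1) \to V(\l_2, f)$ sending $v_{\l_1}$ to $v$, and since $f(x^s)\cdot v = 0$ this descends to an epimorphism $V(\l_1, f) \to V(\l_2, f)$, which is an isomorphism by dimension count. The main technical obstacle is the dictionary step --- correctly matching ``$\phi(m_0^{(1)})$ generates $V(\l_2, f_2)$ as an $H$-module'' with the purely polynomial condition ``$x^i g(x^s)$ is a unit in $k[x]/(f_2(x^s))$'' --- because only once this dictionary is installed does the asymmetry between the cases $\l_1 = \l_2$ and $\l_1 \neq \l_2$ reduce cleanly to the visible factor $x^i$ with $i\>1$.
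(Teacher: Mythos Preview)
Your argument is correct and takes a genuinely different, more structural route than the paper's.  For the necessity direction, the paper works in the opposite direction: it picks the \emph{preimage} $m:=\phi^{-1}(v_0)$ and computes $\phi(x^{ns}\cdot m)$ two ways using \prref{3.6}(a), reading off $f_1=f_2$ directly from the basis coefficients; the nonzero--constant--term clause is then obtained by showing that if $\b_0=0$ the image of $\phi$ would land in the proper subspace $N={\rm span}\{v_t\mid i\<t\<ns-1\}$.  You instead pass forward through $\phi$, identify $V(\l_2,f_2)$ with $k[x]/(f_2(x^s))$, and reduce everything to a single coprimality/divisibility statement in $k[x]$; this packages both conclusions ($f_1=f_2$ and $f(0)\neq0$ when $\l_1\neq\l_2$) into the visible factor $x^i$ with $i\>1$.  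For sufficiency the paper writes down an explicit new basis $\{v_j,u_t\}$ of $V(\l_2,f)$ and an explicit linear isomorphism, then verifies the $H$-module axioms by hand; your approach via \leref{3.5} plus a dimension count is cleaner and avoids that bookkeeping.  The paper's method has the virtue of being entirely self-contained at the level of basis vectors, while yours makes transparent why the condition $f(0)\neq0$ is exactly what is needed --- it is the invertibility of $x$ in $k[x]/(f(x^s))$.
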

\begin{proof}
Let $\{m_0, m_1, \cdots, m_{n_1s-1}\}$ and $\{v_0, v_1, \cdots, v_{n_2s-1}\}$ be the bases of
$V(\l_1, f_1)$ and $V(\l_2, f_2)$ as given before, respectively.

Assume that $\phi: V(\l_1, f_1)\ra V(\l_2, f_2)$ is an isomorphism of $H$-modules. Then
$\Pi(V(\l_1, f_1))=\Pi(V(\l_2, f_2))$ and
${\rm dim}V(\l_1, f_1)={\rm dim}V(\l_2, f_2)$. Hence $[\l_1]=[\l_2]$ in $\widehat{G}/\langle\chi\rangle$
and $n_1=n_2$. Put $n:=n_1=n_2$. Then we may assume that $f_1(y)=y^n-\sum_{j=0}^{n-1}\a_jy^j$
and $f_2(y)=y^n-\sum_{j=0}^{n-1}\b_jy^j$ for some $\a_j, \b_j\in k$.
Since $\phi: V(\l_1, f_1)\ra V(\l_2, f_2)$ is an $H$-module isomorphism,
there exists a unique element $m\in V(\l_1, f_1)$ such that $\phi(m)=v_0$.
Note that $v_j=x^j\cdot v_0$ for all $0\<j\<ns-1$.
By \prref{3.6}(a), we have $\phi(x^{ns}\cdot m)=\phi(\sum_{j=0}^{n-1} \a_jx^{js}\cdot m)
=\sum_{j=0}^{n-1} \a_jx^{js}\cdot\phi(m)=\sum_{j=0}^{n-1} \a_jx^{js}\cdot v_0
=\sum_{j=0}^{n-1} \a_jv_{js}$ and $\phi(x^{ns}\cdot m)=x^{ns}\cdot\phi(m)=x^{ns}\cdot v_0
=\sum_{j=0}^{n-1} \b_jx^{js}\cdot v_0=\sum_{j=0}^{n-1} \b_jv_{js}$. It follows that
$f_1(y)=f_2(y)$.

If $\l_1=\l_2$ then the condition (b) is satisfied. Now suppose that $\l_1\neq\l_2$.
Then $f_1(y)=f_2(y)=y^n-\sum_{j=0}^{n-1}\b_jy^j$.
Since $[\l_1]=[\l_2]$ as shown above, $\l_1=\chi^i\l_2$ for some $1\<i\<s-1$.
Let $u_0=\phi(m_0)$. Then $u_0\in V(\l_2, f_2)_{(\chi^i\l_2)}$ by $m_0\in V(\l_1, f_1)_{(\l_1)}$.
Thus, one can check that ${\rm Im}(\phi)={\rm span}\{x^t\cdot u_0 | 0\<t\<ns-1\}$.
Let $N={\rm span}\{v_t|i\<t\<ns-1\}$.
Then $N$ is a subspace of $V(\l_2, f_2)$ and $N\neq V(\l_2, f_2)$.
Obviously, $u_0\in N$. If $\b_0=0$ then one can check that ${\rm Im}(\phi)\subseteq N$.
This is impossible since $\phi$ is an isomorphism.
Therefore, $\b_0\neq 0$ when $\l_1\neq \l_2$, and so the condition (a) is satisfied in this case.

Conversely, if $\l_1=\l_2$ and $f_1(y)=f_2(y)$, then obviously $V(\l_1, f_1)\cong V(\l_2, f_2)$.
Now assume that $f_1(y)=f_2(y)$ with nonzero constant term and $\l_1=\chi^i\l_2$
for some $1\<i\<s-1$.
Let $f(y)=f_1(y)=f_2(y)$. Then $n:={\rm deg}(f(y))\>1$ and
$f(y)=y^n-\sum_{j=0}^{n-1}\a_jy^j$ for some $\a_j\in k$ with $\a_0\neq 0$.
For any $0\<t<i$, let $u_t=\sum_{j=0}^{n-1}\a_jv_{js+t}\in V(\l_2, f)_{(\chi^t\l_2)}$.
Then $v_t=\a_0^{-1}(u_t-\sum_{1\<j\<n-1}\a_jv_{js+t})$, and
$\{v_{js+t}, u_t|1\<j\<n-1\}$ is a $k$-basis of $V(\l_2, f)_{(\chi^t\l_2)}$,
where $0\<t<i$. Therefore, $\{v_j, u_t|i\<j\<ns-1, 0\<t\<i-1\}$ is a $k$-basis
of $V(\l_2, f)$. Thus, one can define a linear isomorphism
$\phi: V(\l_1, f)\ra V(\l_2, f)$ by $\phi(m_j)=v_{i+j}$ for $0\<j\<ns-1-i$
and $\phi(m_j)=u_{j-ns+i}$ for $ns-i\<j\<ns-1$. By \prref{3.6}(a),
one can check that $u_t=x^{ns+t}\cdot v_0$, $0\<t<i$, and that $\phi$
is an $H$-module map.
\end{proof}

Let ${\rm Irr}[y]$ denote the subset of $k[y]$ consisting of all monic irreducible polynomials
$f(y)$ with $f(y)\neq y$.

\begin{corollary}\colabel{3.8}
Assume $|\chi|=s<\infty$. Let $\l_j \in \widehat{G}$ and $f_j(y)\in{\rm Irr}[y]$, $j=1, 2$. Then
$V(\l_1, f_1)\cong V(\l_2, f_2)$ if and only if $[\l_1]=[\l_2]$ and $f_1(y)=f_2(y)$.
\end{corollary}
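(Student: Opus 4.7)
The plan is to derive this corollary directly from \leref{3.7}, which gives a complete classification of isomorphisms between modules of the form $V(\l, f)$ without any irreducibility hypothesis on $f$. The only supplementary fact required is the elementary observation that every $f(y)\in{\rm Irr}[y]$ has nonzero constant term: indeed, if the constant term vanished then $y$ would divide $f(y)$, and irreducibility would force $f(y)=y$, contradicting the definition of ${\rm Irr}[y]$.

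For the forward direction, I would assume $V(\l_1,f_1)\cong V(\l_2,f_2)$ and invoke \leref{3.7}, which tells us that one of its two conditions (a), (b) holds. Condition (a) directly yields $[\l_1]=[\l_2]$ together with $f_1(y)=f_2(y)$, while condition (b) gives the stronger statement $\l_1=\l_2$ with $f_1(y)=f_2(y)$, and in particular $[\l_1]=[\l_2]$. In either situation the desired conclusion is reached.

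For the converse, I would suppose $[\l_1]=[\l_2]$ and $f_1(y)=f_2(y)$, and split into two cases. If $\l_1=\l_2$, condition (b) of \leref{3.7} is immediately satisfied and produces the isomorphism. If instead $\l_1\neq\l_2$, I verify the hypotheses of condition (a), where the only nontrivial requirement is that the common polynomial $f_1(y)=f_2(y)$ has nonzero constant term; this is supplied by the opening observation applied to $f_1\in{\rm Irr}[y]$.

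There is no genuine obstacle here; the corollary is a streamlined restatement of \leref{3.7} under the irreducibility hypothesis, which automatically rules out the problematic case of a zero constant term that forces the separate bookkeeping between (a) and (b). The role of the assumption $f_j(y)\neq y$ in the definition of ${\rm Irr}[y]$ is precisely to secure this.
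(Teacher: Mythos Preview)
Your proposal is correct and follows essentially the same approach as the paper: the paper's proof simply says the corollary follows from \leref{3.7} since a monic irreducible polynomial in $k[y]$ has nonzero constant term if and only if it is not equal to $y$. Your write-up just spells out the two directions and the case split in a bit more detail.
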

\begin{proof}
It follows from \leref{3.7} since a monic irreducible polynomial in $k[y]$ has nonzero constant term
if and only if it is not equal to $y$.
\end{proof}

\begin{theorem}\thlabel{3.9}
Let $M\in{\mathcal W}$ be simple. Then $M$ is finite dimensional. Moreover,
we have:\\
{ \rm (a)} If ${\rm dim} M=1$, then $M\cong V_{\l}$ for some $\l\in\widehat{G}$.\\
{ \rm (b)} If ${\rm dim} M>1$, then $|\chi|=s<\oo$ and $M\cong V(\l, f)$ for some $\l\in\widehat{G}$
and $f(y)\in{\rm Irr}[y]$.
\end{theorem}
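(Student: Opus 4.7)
The plan is to classify simple weight $H$-modules via the minimal polynomial of $x$ acting on a weight vector. Let $M\in\mcw$ be simple and pick a nonzero $v\in M_{(\l)}$. Since $kG\cdot v=kv$ and $H=k[x]\cdot kG$ (using $gx=\chi(g)xg$), we have $M=H\cdot v=k[x]\cdot v$, and by \leref{3.5} there is an epimorphism $\phi:M(\l)\ra M$ with $\phi(v_\l)=v$. If the minimal polynomial of $x$ on $v$ were zero, $\phi$ would be an isomorphism and $M\cong M(\l)$; but $x\cdot M(\l)$ is a proper nonzero submodule of $M(\l)$, contradicting simplicity. Hence the minimal polynomial $p(x)\in k[x]$ is nonzero and $\dim M=\deg p<\oo$.

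To determine the shape of $p(x)$, note that each $x^i\cdot v$ is a weight vector of weight $\chi^i\l$, and $\{v,xv,\ldots,x^{n-1}v\}$ (with $n=\deg p$) is a basis of $M$; so comparing weights in the relation $x^n\cdot v=\sum_{i<n}\a_i x^iv$ forces $\a_i\neq 0$ only when $|\chi|$ divides $n-i$. Thus $p(x)=x^n$ when $|\chi|=\oo$, and $p(x)=x^rf(x^s)$ with $0\<r<s$ and $f(y)\in k[y]$ monic when $|\chi|=s<\oo$; let $r$ denote the smallest degree of a nonzero monomial of $p(x)$ in either case ($r=n$ when $|\chi|=\oo$). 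Since each $x^j\cdot v$ is a weight vector, the subspace $k[x]\cdot(x^r\cdot v)\subseteq M$ is $G$-stable and hence an $H$-submodule, of $k$-dimension $\deg p-r$. When $r\>1$ this is a proper submodule, so simplicity forces $x^r\cdot v=0$ and hence $p(x)=x^r$; but then for $r\>2$ the line $k\cdot x^{r-1}v$ is itself a proper nonzero $H$-submodule, contradicting simplicity. Therefore $r\in\{0,1\}$, and $r=1$ yields $\dim M=1$, $x\cdot v=0$, and $M\cong V_\l$, establishing (a).

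For (b), $r=0$ forces $|\chi|=s<\oo$ (the case $|\chi|=\oo$ always produces $r\>1$), $p(x)=f(x^s)$ with $\deg f\>1$, and $\ker\phi=f(x^s)k[x]\cdot v_\l=N_f(\l)$; hence $M\cong V(\l,f)$. The main obstacle is verifying $f\in\mathrm{Irr}[y]$. If $f(y)=y$, then $p(x)=x^s$ with $s\>2$ (since $\chi(a)\neq 1$ forces $|\chi|\>2$), and $k\cdot x^{s-1}v$ is a proper nonzero $H$-submodule, a contradiction. If $f=gh$ with $\deg g,\deg h\>1$, then $g(x^s)\cdot v$ is nonzero (else $f(x^s)\mid g(x^s)$, forcing $\deg f\<\deg g$), yet $h(x^s)\cdot g(x^s)v=0$, so $k[x]\cdot g(x^s)v$ is an $H$-submodule of $k$-dimension at most $s\deg h<s\deg f=\dim M$, again contradicting simplicity. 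Hence $f\in\mathrm{Irr}[y]$ and $M\cong V(\l,f)$, as required.
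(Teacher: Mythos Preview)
Your proof is correct and follows essentially the same route as the paper: both realize $M$ as $k[x]\cdot v$ for a weight vector $v$, use the weight grading to constrain the minimal relation among the $x^i\cdot v$, and then apply submodule arguments to force either $\dim M=1$ or $p(x)=f(x^s)$ with $f$ irreducible. Your packaging via the minimal polynomial $p(x)$ and the parameter $r$ is a mild streamlining of the paper's case split on $M_0=\{m:xm=0\}$ and its minimal index $l$; just note that the $r$ in your factorization $p(x)=x^rf(x^s)$ with $0\le r<s$ is the residue $n\bmod s$, which need not agree with the ``smallest degree of a nonzero monomial'' you use immediately afterward (the latter is what your submodule argument actually needs), and that the case $f(y)=y$ in part~(b) is already excluded once $r=0$.
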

\begin{proof}
Let $M$ be a simple weight $H$-module.
Then $\Pi(M)$ is a non-empty set.
Let $M_0=\{m \in M\mid x\cdot m=0\}$. Then one can easily check that $M_0$ is a submodule of $M$.
Hence, if $M_0\neq 0$, then $M=M_0$ since $M$ is simple, and so $M$ is isomorphic to some $V_{\l}$
by $\Pi(M)\neq \emptyset$.

Now assume that $M_0=0$. Pick up a $\l\in\Pi(M)$
and $0\neq v\in M_{(\l)}$. Then $M=H\cdot v$ since $M$ is simple.
By \leref{3.5}, there exists an $H$-module epimorphism $\phi: M(\l)\ra M$
such that $\phi(v_{\l})=v$. Note that
$H=\oplus_{i\>0}x^iH_0$ and $H_0\cdot v=kv$. Hence $M={\rm span}\{x^i\cdot v|i\>0\}$,
and $x^i\cdot v\neq 0$ for all $i\>0$ by $M_0=0$.
Obviously, $x^i\cdot v\in M_{(\chi^i\l)}$ for all $i\>0$.
If $|\chi|=\oo$, then ${\rm span}\{x^i\cdot v|i\>1\}$ is a non-trivial submodule of $M$.
This is impossible since $M$ is simple. Hence $|\chi|=s<\oo$, and $s>1$ by $\chi^{-1}(a)\neq 1$.
If $\{x^i\cdot v|i\>0\}$ is a linearly independent set, then it is a $k$-basis of $M$.
In this case, ${\rm span}\{x^i\cdot v|i\>1\}$ is a non-trivial submodule of $M$,
which is impossible as above. Thus, there exists a positive integer $l$ minimal with respect to
$x^l\cdot v\in{\rm span}\{x^i\cdot v|0\<i\<l-1\}$. Then one can see that
$\{x^i\cdot v|0\<i\<l-1\}$ is a basis of $M$.
Hence dim$ M=l<\oo$.
If $l<s$, then $M=\oplus_{0\<i\<l-1}M_{(\chi^i\l)}$. So $\Pi(M)=\{\chi^i\l|0\<i\<l-1\}$
and $\chi^l\l\notin\Pi(M)$ by $|\chi|=s>l$. This is impossible since $0\neq x^l\cdot v\in M_{(\chi^l\l)}$.
Therefore, $l\>s$ and $M=\oplus_{i=0}^{s-1}M_{(\chi^i\l)}$.
If $s\nmid l$, then $l=ns+j$ with $n\>1$ and $1\<j\<s-1$.
In this case, $x^l\cdot v\in M_{(\chi^j\l)}={\rm span}\{x^{j+ts}\cdot v|0\<t\<n-1\}$,
and hence $x^l\cdot v=\sum_{t=0}^{n-1}\a_tx^{j+ts}\cdot v$ for some $\a_t\in k$.
Let $u=x^{l-1}\cdot v-\sum_{t=0}^{n-1}\a_tx^{j-1+ts}\cdot v$. Then $u\neq 0$ by the
minimality of $l$, but $x\cdot u=0$, a contradiction since $M_0=0$. Hence $s|l$, and so
$l=ns$ with $n\>1$. Moreover, we have
$x^{ns}\cdot v=\sum_{t=0}^{n-1}\a_tx^{ts}\cdot v$ for some $\a_t\in k$ as shown above.
Let $f(y)=y^n-\sum_{t=0}^{n-1}\a_ty^t$ in $k[y]$. Then $f(x^s)\cdot v=0$, and so $f(x^s)\cdot M=0$.
Hence $N_f(\l)\subseteq{\rm Ker}(\phi)$, and $\phi$ induces an $H$-module epimorphism
$\ol{\phi}: V(\l, f)\ra M$.
Suppose that $f(y)=f_1(y)f_2(y)$ for some monic polynomials $f_i(y)\in k[y]$
with $n_i:={\rm deg}(f_i(y))\>1$, $i=1, 2$. Then $n_i<n$, and so $n_is<ns=l$, $i=1, 2$.
If $f_i(x^s)\cdot v=0$, then $x^{n_is}\cdot v\in{\rm span}\{x^j\cdot v|0\<j\<n_is-1\}$,
which is impossible by the minimality of $l$. Hence $f_i(x^s)\cdot v\neq 0$, $i=1, 2$.
Let $v_0=f_2(x^s)\cdot v$. Then $0\neq v_0\in M_{(\l)}$ and $f_1(x^s)\cdot v_0=0$.
This shows that $x^{n_1s}\cdot v_0\in{\rm span}\{x^j\cdot v_0|0\<j\<n_1s-1\}$.
Consequently, $N:={\rm span}\{x^j\cdot v_0|0\<j\<n_1s-1\}$ is a nonzero submodule
of $M$ and ${\rm dim} N\<n_1s<ns=l={\rm dim} M$, which is impossible since $M$ is simple.
Therefore, $f(y)$ is an irreducible polynomial in $k[y]$. Obviously, $f(y)\neq y$.
It follows from \prref{3.6}(b) that $V(\l, f)$ is simple, and so $\ol{\phi}$
is an isomorphism. This completes the proof.
\end{proof}

\begin{corollary}\colabel{3.10}
Assume $|\chi|=s<\infty$. Let $\l \in \widehat{G}$ and let $f(y)\in k[y]$ be a monic
polynomial with $n:={\rm deg}(f(y))\>1$. Then $V(\l, f)$ is simple if and only if
$f(y)\in{\rm Irr}[y]$.
\end{corollary}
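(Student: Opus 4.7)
The ``if'' direction is exactly \prref{3.6}(b), so it remains to show that if $V(\l,f)$ is simple then $f\in{\rm Irr}[y]$. My plan is to deduce this from the classification of simple weight modules already in hand, namely \thref{3.9} together with \leref{3.7}.

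Since $\chi^{-1}(a)\neq 1$, the order $s=|\chi|$ satisfies $s\>2$, so $\dim V(\l,f)=ns\>2$. Hence case (a) of \thref{3.9} cannot apply, and case (b) yields an isomorphism $V(\l,f)\cong V(\mu,g)$ for some $\mu\in\widehat{G}$ and some $g\in{\rm Irr}[y]$. In either alternative of \leref{3.7}, such an isomorphism forces the two defining monic polynomials to coincide; thus $f=g\in{\rm Irr}[y]$, as required.

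There is essentially no obstacle here, since the substantive work has already been done in proving \thref{3.9}. For a reader preferring a self-contained argument that bypasses the classification, one may argue by contrapositive: if $f(y)=y$ then the formulas defining $V(\l,f)$ give $x\cd m_{s-1}=0$, so $k m_{s-1}$ is a proper nonzero submodule of the $s$-dimensional module $V(\l,f)$ (using $s\>2$); and if $f=f_1f_2$ with $\deg f_i\>1$, then $v_0:=f_2(x^s)\cd m_0$ is a nonzero weight vector of weight $\l$ satisfying $f_1(x^s)\cd v_0=f(x^s)\cd m_0=0$ by \prref{3.6}(a), so the cyclic submodule $H\cd v_0$ has $k$-dimension at most $(\deg f_1)s<ns$, contradicting simplicity. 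Either route produces the desired conclusion.
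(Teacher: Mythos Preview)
Your main argument is correct and is exactly the paper's proof: the paper's one-line justification ``It follows from \prref{3.6}(b), \leref{3.7} and \thref{3.9}(b)'' unpacks precisely to what you wrote, namely that the ``if'' direction is \prref{3.6}(b), and for the ``only if'' direction the classification \thref{3.9}(b) gives $V(\l,f)\cong V(\mu,g)$ with $g\in{\rm Irr}[y]$, whence \leref{3.7} forces $f=g$. Your supplementary direct argument by contrapositive is also correct (and in fact mirrors the reasoning used inside the proof of \thref{3.9} itself); it is a pleasant addition but not a different route in substance.
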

\begin{proof}
It follows from \prref{3.6}(b), \leref{3.7} and \thref{3.9}(b).
\end{proof}

\begin{corollary}\colabel{3.11}
{ \rm (a)} If $|\chi|=\oo$, then $\{V_{\l}\mid \l\in \widehat{G}\}$ is a representative set
of isomorphic classes of simple objects in $\mathcal W$.\\
{\rm (b)} If $|\chi|=s<\oo$, then $\{V_{\l}, V(\s, f)\mid \l\in\widehat{G},
[\s]\in\widehat{G}/\langle\chi\rangle, f(y)\in{\rm Irr}[y]\}$
is a representative set of isomorphic classes of simple objects in $\mathcal W$.\\
\end{corollary}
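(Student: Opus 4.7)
The plan is to deduce Corollary 3.11 directly as a cataloging statement from the classification already established in Theorem 3.9, together with the isomorphism criteria in Lemma 3.3(b) and Corollary 3.8. No new analysis of the modules is needed; the argument is essentially a bookkeeping step.

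For part (a), suppose $|\chi|=\infty$ and let $M\in\mathcal{W}$ be simple. By Theorem 3.9 either $\dim M = 1$ and $M\cong V_\lambda$ for some $\lambda\in\widehat{G}$, or $\dim M>1$ and $|\chi|<\infty$. The second case is ruled out by hypothesis, so every simple weight module is of the form $V_\lambda$. By Lemma 3.3(a) each $V_\lambda$ is simple, and by Lemma 3.3(b) two such modules are isomorphic if and only if their weights coincide. Hence $\{V_\lambda\mid \lambda\in\widehat{G}\}$ is a complete, irredundant list.

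For part (b), suppose $|\chi|=s<\infty$. Theorem 3.9 shows every simple $M\in\mathcal{W}$ is either $V_\lambda$ for some $\lambda\in\widehat{G}$ or $V(\sigma,f)$ for some $\sigma\in\widehat{G}$ and $f(y)\in\mathrm{Irr}[y]$, and Proposition 3.6(b) confirms that every such $V(\sigma,f)$ is indeed simple. It remains to show the listed modules are pairwise non-isomorphic and to justify choosing $[\sigma]\in\widehat{G}/\langle\chi\rangle$ rather than $\sigma$ as the parameter. Non-isomorphism between the two families follows from a dimension count: $\dim V_\lambda = 1$ while $\dim V(\sigma,f) = ns \geq s > 1$, using $\chi^{-1}(a)\neq 1$ which forces $s>1$. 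Within the first family, Lemma 3.3(b) gives the classification. Within the second family, Corollary 3.8 states that $V(\sigma_1,f_1)\cong V(\sigma_2,f_2)$ iff $[\sigma_1]=[\sigma_2]$ and $f_1=f_2$, which exactly justifies indexing by cosets $[\sigma]\in\widehat{G}/\langle\chi\rangle$ together with $f\in\mathrm{Irr}[y]$.

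There is no real obstacle here: every ingredient has already been proved, and the only thing to watch is to verify explicitly that the two types of simples cannot coincide (the dimension argument above) and that the coset indexing in (b) is forced by Corollary 3.8. Accordingly, the proof can be stated in just a few lines by invoking Theorem 3.9, Lemma 3.3(b), Proposition 3.6(b) and Corollary 3.8 in sequence.
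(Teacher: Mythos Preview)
Your proposal is correct and follows essentially the same approach as the paper, which simply states that the corollary follows from Lemma~3.3, Proposition~3.6(b), Corollary~3.8 and Theorem~3.9. Your version is slightly more explicit in spelling out the dimension argument separating the two families in part~(b), but this is just an elaboration of the same reasoning.
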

\begin{proof}
It follows from \leref{3.3}, \prref{3.6}(b), \coref{3.8} and \thref{3.9}.
\end{proof}

If $k$ is an algebraically closed field, then a polynomial in $k[y]$ is irreducible
if and only if it is of degree one. Hence ${\rm Irr}[y]=\{y-\b|\b\in k^{\times}\}$
in this case. Thus, by \coref{3.11}, we have the following corollary.

\begin{corollary}\colabel{3.12}
Assume that $k$ is an algebraically closed field. Then\\
{ \rm (a)} If $|\chi|=\oo$, then $\{V_{\l}\mid \l\in \widehat{G}\}$ is a representative set
of isomorphic classes of simple objects in $\mathcal W$.\\
{\rm (b)} If $|\chi|=s<\oo$, then $\{V_{\l}, V(\s, \b)\mid \l\in\widehat{G},
[\s]\in\widehat{G}/\langle\chi\rangle, \b\in k^{\times}\}$
is a representative set of isomorphic classes of simple objects in $\mathcal W$.
\end{corollary}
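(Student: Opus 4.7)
The plan is to deduce this corollary directly from \coref{3.11} by specializing the indexing set ${\rm Irr}[y]$ to the case when $k$ is algebraically closed. Indeed, \coref{3.11} already provides a complete classification of simple weight modules over $H$ in terms of the characters of $G$ and the monic irreducible polynomials $f(y)\in{\rm Irr}[y]$, so no new module-theoretic work is required.

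First, I would recall the definition of ${\rm Irr}[y]$ from just before \coref{3.8}: it consists of all monic irreducible polynomials $f(y)\in k[y]$ with $f(y)\neq y$. When $k$ is algebraically closed, every monic irreducible polynomial in $k[y]$ has degree one, i.e.\ is of the form $y-\b$ for some $\b\in k$. Excluding the polynomial $y$ itself corresponds exactly to requiring $\b\neq 0$, so we obtain the identification
\begin{equation*}
{\rm Irr}[y]=\{y-\b\mid \b\in k^{\times}\}.
\end{equation*}

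Next, I would invoke the notational convention introduced right before \prref{3.6}: for $f(y)=y-\b$, the module $V(\l,f)$ is denoted $V(\l,\b)$. Hence the family $\{V(\s,f)\mid [\s]\in\widehat{G}/\langle\chi\rangle,\, f(y)\in{\rm Irr}[y]\}$ appearing in \coref{3.11}(b) is precisely $\{V(\s,\b)\mid [\s]\in\widehat{G}/\langle\chi\rangle,\, \b\in k^{\times}\}$.

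Finally, substituting these identifications into the two parts of \coref{3.11} yields the two parts of the present corollary. Part (a) is unchanged from \coref{3.11}(a), since in the case $|\chi|=\oo$ only the one-dimensional modules $V_{\l}$ appear. Part (b) follows by replacing ${\rm Irr}[y]$ with $k^{\times}$ via the bijection $\b\mapsto y-\b$ and using $V(\s,y-\b)=V(\s,\b)$. There is no real obstacle here: the only point to verify is that the correspondence $f\leftrightarrow \b$ preserves the isomorphism relation of \coref{3.8}, which is immediate because $f_1=f_2$ in ${\rm Irr}[y]$ translates to $\b_1=\b_2$ in $k^{\times}$.
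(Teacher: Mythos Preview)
Your proposal is correct and follows exactly the same approach as the paper: both observe that over an algebraically closed field ${\rm Irr}[y]=\{y-\b\mid \b\in k^{\times}\}$ and then invoke \coref{3.11}, using the notational convention $V(\s,y-\b)=V(\s,\b)$. Your additional remark about \coref{3.8} is a harmless bit of extra detail but not needed, since the bijection between ${\rm Irr}[y]$ and $k^{\times}$ already transports the classification verbatim.
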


Assume that  $k$ is an algebraically closed field and $G$ is an abelian group.
Let $M$ be a finite dimensional simple $H$-module.
Then $M$ is a weight module. In fact, since $k$ is an
algebraically closed field and $kG$ is a commutative algebra, there is a $\l \in \widehat{G}$ such that
$M_{(\l)}\neq 0$. Hence $\op _{\l\in \widehat{G}}M_{(\l)}=\op _{\l\in \Pi(M)}M_{(\l)}$
is a nonzero submodule of $M$. Hence $M=\op _{\l\in \Pi(M)}M_{(\l)}$
since $M$ is simple, which implies that $M$ is a weight module.
When char$(k)=0$ and $|\chi|=\oo$, this statement can be described by using the results of
Andruskiewitsch, Radford and Schneider, see \cite{AndRadSch}.

\begin{corollary}\colabel{3.13}
Assume that $k$ is an algebraically close field and $G$ is an abelian group.
Let $M$ be a simple $H$-module.
Then $M$ is finite dimensional if and only if $M$ is a weight module. In this case,
$M\cong V_{\l}$ for some $\l\in\widehat{G}$, or $M\cong V(\l, \b)$ with $|\chi|=s<\oo$
for some $\l\in\widehat{G}$ and $\b\in k^{\times}$.
\end{corollary}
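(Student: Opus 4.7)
My plan is to combine two facts already established in the paper: \thref{3.9}, which handles the backward direction, and the observation in the paragraph immediately preceding the corollary, which handles the forward direction under the present hypotheses. Once the equivalence ``finite dimensional $\Leftrightarrow$ weight module'' is in hand for simple $M$, the isomorphism classification is an immediate application of \coref{3.12}.

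For the forward direction, I would assume $M$ is simple and finite dimensional. The image of $kG$ in $\End_k(M)$ is a finite-dimensional commutative subalgebra; over the algebraically closed field $k$, any such algebra admits at least one algebra map onto $k$ (each maximal ideal has residue field a finite field extension of $k$, hence $k$ itself). Pulling back produces a character $\l\in\widehat{G}$ for which the simultaneous eigenspace $M_{(\l)}$ is nonzero. Using the commutation relation $xg=\chi^{-1}(g)gx$ (equivalently $gx=\chi(g)xg$), I would then verify that $x\cdot M_{(\mu)}\ss M_{(\chi\mu)}$ for every $\mu\in\widehat{G}$, while each $g\in G$ preserves every weight space. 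Hence $\op_{\mu\in\widehat{G}}M_{(\mu)}$ is a nonzero $H$-submodule of $M$, and simplicity forces equality, so $M$ is a weight module.

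The backward direction is immediate from \thref{3.9}: any simple object of $\mathcal{W}$ is finite dimensional. Applying \coref{3.12} to the now finite-dimensional simple weight module $M$ then yields the claimed isomorphism classes: $M\cong V_{\l}$ for some $\l\in\widehat{G}$ when $\dim M=1$, and $M\cong V(\l,\b)$ with $|\chi|=s<\oo$ for some $\l\in\widehat{G}$ and $\b\in k^{\times}$ when $\dim M>1$.

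The only step requiring care is the existence of the common eigenvector for the $G$-action on the finite-dimensional space $M$. This reduces to the standard linear-algebra fact that a commuting family of operators on a finite-dimensional vector space over an algebraically closed field has a simultaneous eigenvector (equivalently, every simple module over a commutative finite-dimensional $k$-algebra is one-dimensional, hence given by a character). I do not anticipate any genuine obstacle beyond this routine observation.
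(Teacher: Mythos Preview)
Your proposal is correct and follows essentially the same route as the paper. The paper's argument (given in the paragraph immediately preceding the corollary) is precisely: use commutativity of $kG$ and algebraic closedness of $k$ to find a nonzero weight space, invoke the already-noted fact that $\bigoplus_{\l}M_{(\l)}$ is an $H$-submodule, conclude $M$ is a weight module by simplicity, then apply \thref{3.9} for the converse and \coref{3.12} for the classification---exactly as you outline.
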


\begin{remark}\relabel{3.14}
Assume that $k$ is an algebraically closed field. In case that $G$ is a finite abelian group,
any simple $H$-module is a weight module, and hence
\coref{3.13} describes all simple $H$-modules. In case that $G$ is a finite (or simple) non-abelian
group, a simple $H_0$-module is not necessarily one dimensional, and hence
the corresponding Verma module is not necessarily generated by a
single element. In this case, the study of simple $H$-modules
is different with the abelian case. One can see \cite{Igl09} and \cite{PoVay} for
the representations of some pointed Hopf algebras with non-abelian coradicals and
the representations of non-pointed Hopf algebras, respectively.
\end{remark}

By \prref{3.1}, a $k$-space $M$ is an $H'$-module
(a simple $H'$-module) if and only if $M$ is an $H$-module (a simple $H$-module)
such that $I\cdot M=0$, where $I$ is the Hopf ideal of
$H$ as described in \thref{2.7}. Note that $\chi^{-1}(a)\neq1$, and hence $|\chi|>1$. It follows from \thref{2.7}
that $I=0$ if and only if $\chi^{-1}(a)$ is not a primitive $n$-th root of unity for any
$n\>2$. Moreover, $|\chi|=\infty$ if $I=0$. If $\chi^{-1}(a)$ is a primitive $n$-th root of unity for some
$n\>2$, then by \coref{2.9}, one knows that $I=\langle x^n\rangle$,
or $I=\langle x^n-\b(1-a^n)\rangle$ with $\b\neq 0$, $a^n\neq1$ and $|\chi|=n$.
Thus, by \coref{3.11}, one can get all the simple weight modules over $H'$.

\begin{corollary}\colabel{3.15}
If $\chi^{-1}(a)$ is not a primitive $n$-th root of unity for any
$n\>2$, then $\{V_{\l}\mid \l\in \widehat{G}\}$ is a representative set of
isomorphic classes of simple objects in $\mathcal{W}'$.
\end{corollary}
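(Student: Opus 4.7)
The plan is to show that under the hypothesis we have both $I=0$ (so that $H'=H$ and $\mathcal{W}'=\mathcal{W}$) and $|\chi|=\infty$, at which point \coref{3.11}(a) yields the desired representative set without further work.

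The first step is to apply \thref{2.7} to conclude $I=0$. Since we are in Case~1, $\chi^{-1}(a)\neq 1$, so $\chi(a)\neq 1$ as well. Because the set of primitive $n$-th roots of unity in $k$ is closed under inversion, the hypothesis on $\chi^{-1}(a)$ is equivalent to the statement that $\chi(a)$ is not a primitive $n$-th root of unity for any $n\geq 2$. Combined with $\chi(a)\neq 1$, this rules out cases (a)(i), (b)(i), (b)(ii), and (b)(iii) of \thref{2.7}, leaving only (a)(ii) in characteristic $0$ and (b)(iv) in characteristic $p>0$. In both remaining cases \thref{2.7} gives $I=0$, and then \prref{3.1} identifies $\mathcal{W}'$ with $\mathcal{W}$ as full subcategories of ${\mathcal M}$.

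The second step is to verify $|\chi|=\infty$. If instead $|\chi|=m<\infty$, then $\chi^{-1}(a)^m = (\chi^m)(a^{-1}) = 1$, so $\chi^{-1}(a)$ is a root of unity; since $\chi^{-1}(a)\neq 1$, its multiplicative order $n$ would satisfy $n\geq 2$, forcing $\chi^{-1}(a)$ to be a primitive $n$-th root of unity and contradicting the hypothesis.

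Combining the two steps, \coref{3.11}(a) immediately yields that $\{V_{\l}\mid\l\in\widehat{G}\}$ is a representative set of isomorphism classes of simple objects of $\mathcal{W}'$. There is essentially no obstacle here: the argument is a direct bookkeeping reduction on top of \thref{2.7} and \coref{3.11}(a). The only point that merits mild care is the translation between hypotheses stated in terms of $\chi^{-1}(a)$ and conditions stated in terms of $\chi(a)$ in \thref{2.7}.
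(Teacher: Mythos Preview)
Your proof is correct and follows essentially the same route as the paper: the paragraph immediately preceding \coref{3.15} already records that $I=0$ exactly when $\chi^{-1}(a)$ is not a primitive $n$-th root of unity for any $n\geq 2$ and that $|\chi|=\infty$ in that case, so the paper's one-line proof simply invokes \coref{3.11}(a). Your write-up just unpacks those two observations explicitly; the only superfluous step is the inversion remark, since \thref{2.7} is being applied with the character $\chi^{-1}$ of $H=kG(\chi^{-1},a,\delta)$, so its hypotheses are already phrased in terms of $\chi^{-1}(a)$ and no translation is needed.
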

\begin{proof}
It follows from \coref{3.11} since $|\chi|=\oo$ in
this case.
\end{proof}

\begin{corollary}\colabel{3.16}
Assume that $\chi^{-1}(a)$ is a primitive $n$-th root of unity for some $n\>2$.
Then a representative set $\mathcal S$ of isomorphic classes of simple objects in $\mathcal{W}'$
can be described as follows:\\
{ \rm (a)} If $I=\langle x^n \rangle$, then $\mathcal S=\{V_{\l}\mid \l\in \widehat{G}\}$.\\
{ \rm (b)} If $I=\langle x^n-\b(1-a^n)\rangle$ with $\b\in k^{\ti}$, $a^n\neq 1$ and $|\chi|=n$,
then
$$\mathcal S=\left\{V_{\l}, V({\sigma}, \b(1-\sigma(a)^n))\mid \l\in \widehat{G} \text{ with }
 \l(a)^n=1, [\sigma]\in \widehat{G}/\langle\chi\rangle \text{ with }\sigma (a)^n\neq1\right\}.$$
\end{corollary}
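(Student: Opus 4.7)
The plan is to combine \prref{3.1}, which identifies $\mathcal{W}'$ with the full subcategory of $\mathcal{W}$ cut out by the condition $I\cd M=0$, with the complete classification of simple objects in $\mathcal{W}$ supplied by \coref{3.11}, and then to test each candidate against the single relation generating $I$. Since $\chi^{-1}(a)$ is a primitive $n$-th root of unity we have $|\chi|\>n\>2$ and $\chi(a)^n=1$; in particular $a^n$ acts on every weight vector of weight $\chi^i\s$ as the scalar $\s(a)^n$, independent of $i$.

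For the one-dimensional modules $V_\l$, the generator $x$ kills $V_\l$, hence so does $x^n$. In case (a), where $I=\langle x^n\rangle$, this immediately gives $I\cd V_\l=0$, so every $V_\l$ belongs to $\mathcal{W}'$. In case (b), where $I=\langle x^n-\b(1-a^n)\rangle$, the same computation reduces $I\cd V_\l=0$ to $-\b(1-\l(a)^n)=0$; since $\b\neq 0$, this is equivalent to $\l(a)^n=1$.

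The higher-dimensional simple modules $V(\s,f)$ exist only when $s:=|\chi|<\oo$, and in that case have dimension $ms$ with $m:=\mathrm{deg}(f)$. I will exploit two features: $x^s$ is central in $H$ (because $\chi^s=1$, so $x^s$ commutes with every $g\in G$), and by \prref{3.6}(a), $x^s$ satisfies $f(x^s)\cd V(\s,f)=0$. In case (a), if $s>n$ then $x^n\cd m_0=m_n\neq 0$, so $x^n$ does not annihilate $V(\s,f)$; if $s=n$, then requiring $x^n\cd V(\s,f)=0$ would make $0$ a root of the irreducible polynomial $f$, forcing $f(y)=y$, contrary to $f\in\mathrm{Irr}[y]$. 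Hence no $V(\s,f)$ lies in $\mathcal{W}'$ in case (a). In case (b) we have $s=n$, so $x^n=x^s$ is central and must act on the simple module $V(\s,f)$ as a scalar $c$; the relation $x^n=\b(1-a^n)$ modulo $I$ and the computation of $a^n$ above force $c=\b(1-\s(a)^n)$. Irreducibility of $f$ then forces $f(y)=y-c$, and $f\in\mathrm{Irr}[y]$ (i.e., $f(y)\neq y$) forces $c\neq 0$, i.e., $\s(a)^n\neq 1$. The resulting module is precisely $V(\s,\b(1-\s(a)^n))$, and the condition $\s(a)^n\neq 1$ is well-defined on the coset $[\s]\in\widehat{G}/\langle\chi\rangle$ because $\chi(a)^n=1$.

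Assembling the above with \leref{3.3}, \coref{3.8} and \coref{3.11} produces the two lists in the corollary: different $\l$'s give non-isomorphic $V_\l$; different cosets $[\s]$ give non-isomorphic $V(\s,\b(1-\s(a)^n))$ by \coref{3.8}; and the two families are distinguished by dimension. The only genuine obstacle I foresee is careful bookkeeping, especially in case (a), where the argument must simultaneously dispose of $|\chi|=\oo$ (in which the $V(\s,f)$ family is already empty by \thref{3.9}(b)) and of $|\chi|=s\>n$ finite (where the $\deg f=1$ and $\deg f\>2$ subcases must both be ruled out via \prref{3.6}(a)).
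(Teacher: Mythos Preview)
Your proposal is correct and follows essentially the same route as the paper: both arguments view a simple object of $\mathcal W'$ as a simple object of $\mathcal W$ (via \prref{3.1}) and then determine which of the modules in \coref{3.11} are annihilated by the generator of $I$. The paper carries this out a bit more implicitly, dipping back into the proof of \thref{3.9} to pin down the dimension in case~(b), whereas you run straight down the list of \coref{3.11} and test the relation; the mathematical content is the same.

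One phrasing to tighten: in case~(b) you write that ``$x^n=x^s$ is central and must act on the simple module $V(\sigma,f)$ as a scalar $c$.'' Read in isolation this looks like an appeal to Schur's lemma, which over a non--algebraically closed field would only give that $x^s$ acts through the division ring $\End_H(V(\sigma,f))$, not as a scalar in $k$. The actual reason $x^s$ acts as a scalar here is the one you supply in the next clause: the relation $x^n-\b(1-a^n)\in I$ together with $a^n$ acting as $\sigma(a)^n$ forces $x^s$ to act as $\b(1-\sigma(a)^n)\in k$. Once $x^s$ acts as this scalar $c$, the identity $f(x^s)\cdot V(\sigma,f)=0$ from \prref{3.6}(a) gives $f(c)=0$, hence $f(y)=y-c$ by irreducibility; this is exactly your conclusion. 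So the argument is sound, but you should reorder the sentence so that the relation, not centrality, is what produces the scalar.
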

\begin{proof}
Part (a) can be shown easily. Now we show Part (b).
Assume $I=\langle x^n-\b(1-a^n)\rangle$ with $\b\in k^{\ti}$, $a^n\neq 1$ and $|\chi|=n$.
Let $M$ be a simple weight $H'$-module. Then $M$ is a simple weight $H$-module and $(x^n-\b(1-a^n))\cdot M=0$.
By \thref{3.9}, ${\rm dim}M=t<\oo$.

We first consider the case of $t=1$. In this case, by the proof of
\thref{3.9}, one knows that $x\cdot M=0$ and $M\cong V_{\l}$ for some $\l\in\widehat{G}$.
Then from $(x^n-\b(1-a^n))\cdot M=0$ and $\b\neq 0$, one gets $\l(a)^n=1$.
Next, we consider the case of $t>1$. In this case, $n|t$ by $|\chi|=n<\oo$.
Let $\s\in\Pi(M)$ and $0\neq v\in M_{(\s)}$. Then by the proof of
\thref{3.9}, $t$ is the smallest positive integer such that $x^t\cdot v\in{\rm span}\{x^i\cdot v|0\<i\<t-1\}$.
Moreover, $M={\rm span}\{x^i\cdot v|0\<i\<t-1\}$.
Since $(x^n-\b(1-a^n))\cdot M=0$, $x^n\cdot v=\b(1-a^n)\cdot v=\b(1-\s(a)^n)v\in {\rm span}\{x^i\cdot v|0\<i\<n-1\}$.
Hence $t=n$ by $n\<t$ and the minimality of $t$. Thus, $M={\rm span}\{x^i\cdot v|0\<i\<n-1\}\cong V(\s, \b(1-\s(a)^n))$.
Since $M$ is simple, $\b(1-\s(a)^n)\neq 0$. Hence $\s(a)^n\neq 1$.
Furthermore, if $[\l]=[\s]$, then $\l(a)^n=\s(a)^n$ by $|\chi|=n$ or the assumption that
$\chi^{-1}(a)$ is a primitive $n$-th root of unity. It follows from \coref{3.8} that
$V({\l},\b(1-\l(a)^n))\cong V({\sigma},\b(1-\sigma(a)^n))$
if and only if $[\l]=[\sigma]$.

Finally, for any $\l\in \widehat{G}$ with $\l(a)^n=1$ and $\sigma\in \widehat{G}$ with $\sigma (a)^n\neq1$,
one can see that
$(x^n-\b(1-a^n))\cdot V_{\l}=0$ and $(x^n-\b(1-a^n))\cdot V({\sigma}, \b(1-\sigma(a)^n))=0$.
Therefore, $V_{\l}$ and $V({\sigma}, \b(1-\sigma(a)^n))$ are $H'$-modules in this case.
This completes the proof.
\end{proof}

In the rest of this section, we consider some tensor products of simple weight modules over $H$ or $H'$.
By Corollaries 3.15 and 3.16, % \coref{3.15} and \coref{3.16},
one knows that there is a simple weight $H'$-module with dimension $>1$
only if $|\chi|=s<\oo$ and
$\chi^{-1}(a)$ is a primitive $s$-th root of unity.

\begin{proposition}\prlabel{3.17}
Let $\sigma ,\l \in \widehat{G}$ and $\a\in k$. Then\\
{\rm (a)} $V_{\l}\otimes V_{\s}\cong V_{\l\s}$.\\
{\rm (b)} Assume $|\chi|=s<\infty$. Then
$V_{\l}\ot V(\sigma, \a)\cong V(\l\sigma, \a)$ and $V(\sigma, \a)\ot V_{\l}\cong V(\sigma\l, \a\l(a)^s)$.
\end{proposition}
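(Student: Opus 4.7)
The plan is to unwind the tensor product action using the coproducts $\D(g)=g\ot g$ and $\D(x)=x\ot a+1\ot x$, and then match the resulting module structure against the explicit presentations of $V_{\l\s}$ and $V(\l\s,\a)$, $V(\s\l,\a\l(a)^s)$ given just before \prref{3.6}.

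For part (a), I fix nonzero $v\in V_{\l}$ and $w\in V_{\s}$ and compute directly: since $x$ annihilates each factor, $x\cd(v\ot w)=(x\cd v)\ot(a\cd w)+v\ot(x\cd w)=0$, while $g\cd(v\ot w)=\l(g)\s(g)(v\ot w)=(\l\s)(g)(v\ot w)$. Thus $v\ot w$ spans a one-dimensional $H$-module on which $G$ acts by $\l\s$ and $x$ acts as zero, which is exactly $V_{\l\s}$.

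For the first isomorphism of part (b), let $\{m_0,\ldots,m_{s-1}\}$ be the standard basis of $V(\s,\a)$ described just before \prref{3.6} (with $g\cd m_i=\chi^i(g)\s(g)m_i$, $x\cd m_i=m_{i+1}$ for $i\<s-2$, and $x\cd m_{s-1}=\a m_0$), and fix a nonzero $v\in V_{\l}$. I take $m'_i:=v\ot m_i$. Because $x\cd v=0$, the first half of $\D(x)(v\ot m_i)$ vanishes, so $x\cd m'_i=v\ot(x\cd m_i)$; combined with $g\cd m'_i=\chi^i(g)(\l\s)(g)m'_i$, these are exactly the defining relations of $V(\l\s,\a)$, and the map sending the standard basis of $V(\l\s,\a)$ to $(m'_0,\ldots,m'_{s-1})$ is the required isomorphism.

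For the second isomorphism the action of $a$ on $V_{\l}$ is multiplication by the nontrivial scalar $\l(a)$, and this must be absorbed by a rescaling. I would set $m'_i:=\l(a)^i(m_i\ot v)$, so that the extra $\l(a)$ produced each time $x$ acts is swallowed into the next basis element. A short check using $x\cd(m_i\ot v)=\l(a)(x\cd m_i)\ot v$ then yields $x\cd m'_i=m'_{i+1}$ for $i\<s-2$ and $x\cd m'_{s-1}=\l(a)^s\a m'_0$, while the $G$-action on $m'_i$ is by $\chi^i(\s\l)$, matching $V(\s\l,\a\l(a)^s)$. The one step requiring care is exactly this choice of rescaling constant: it must be calibrated so that precisely $s$ applications of $x$ produce the twist $\a\mapsto\a\l(a)^s$. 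Once this is in place the verification is a direct unwinding of the Hopf action on tensor products.
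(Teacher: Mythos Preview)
Your proposal is correct and is precisely the ``straightforward verification'' that the paper invokes without details; the explicit rescaling $m'_i=\l(a)^i(m_i\ot v)$ for the second isomorphism in (b) is exactly what makes the check go through.
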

\begin{proof}
It follows from a straightforward verification.
\end{proof}

\begin{proposition}\prlabel{3.18}
Assume that $|\chi|=s<\infty$ and that $\chi^{-1}(a)$ is a primitive $s$-th root of unity.
Let $\sigma ,\l \in \widehat{G}$ and $\a, \b\in k$. Then
$V(\sigma, \a)\ot V(\l, \b)\cong\oplus_{t=0}^{s-1}V(\chi^t\s\l, \a\l(a)^s+\b)$.
\end{proposition}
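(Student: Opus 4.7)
The plan is to construct an $H$-module homomorphism
\[ \Phi : \bigoplus_{t=0}^{s-1} V(\chi^t\s\l,\g)\lra V(\s,\a)\ot V(\l,\b), \qquad \g:=\a\l(a)^s+\b, \]
via the universal property of Verma modules (\leref{3.5}), and then verify it is an isomorphism by a dimension count. The key preliminary observation is that $x^s$ acts as the scalar $\g$ on the tensor product. Since $\chi^{-1}(a)$ is a primitive $s$-th root of unity, the $q$-binomials $\binom{s}{l}_{\chi^{-1}(a)}$ vanish for $0<l<s$, so \leref{1.6}(a) yields $\D(x^s)=x^s\ot a^s+1\ot x^s$. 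Using that $x^s$ acts as $\a$ on $V(\s,\a)$ and as $\b$ on $V(\l,\b)$, and that $a^s\cdot v_j=\l(a)^s v_j$ (because $\chi(a)^s=1$), one checks $x^s\cdot(m_i\ot v_j)=\g(m_i\ot v_j)$ on each basis element.

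Now $m_i\ot v_j$ has $G$-weight $\chi^{i+j}\s\l$, so the weight space $W_t:=(V(\s,\a)\ot V(\l,\b))_{(\chi^t\s\l)}$ has basis $\{m_i\ot v_j: i+j\equiv t\pmod s\}$, dimension $s$, and $x\cdot W_t\ss W_{t+1}$ (indices mod~$s$). For each $t\in\{0,\ldots,s-1\}$ choose $w_t:=m_0\ot v_t\in W_t$. By \leref{3.5}, there is an $H$-module map $\phi_t:M(\chi^t\s\l)\ra V(\s,\a)\ot V(\l,\b)$ with $\phi_t(v_{\chi^t\s\l})=w_t$; by the preliminary observation it factors through $V(\chi^t\s\l,\g)=M(\chi^t\s\l)/(x^s-\g)M(\chi^t\s\l)$, yielding $\ol{\phi_t}$, and summing gives $\Phi$. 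Both source and target of $\Phi$ have dimension $s^2$, so it suffices to prove $\Phi$ is surjective, i.e.\ that $\{x^n\cdot w_t: 0\<n,t\<s-1\}$ is a basis of the tensor product. Because $x^n\cdot w_t\in W_{(n+t)\bmod s}$, this in turn reduces to the assertion that, for each $j\in\{0,\ldots,s-1\}$, the $s$ vectors $\{x^{(j-t)\bmod s}\cdot w_t : t=0,\ldots,s-1\}$ are linearly independent in $W_j$.

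This last linear-independence claim is the technical heart of the argument. Expanding $x^n\cdot w_t=\D(x^n)\cdot(m_0\ot v_t)$ via the $q$-binomial formula of \leref{1.6}(a), with careful bookkeeping of the wrap-around rules $x\cdot m_{s-1}=\a m_0$ and $x\cdot v_{s-1}=\b v_0$, produces an explicit $s\times s$ transition matrix relative to the monomial basis of $W_j$. The leading $l=0$ term of $x^n w_t$ contributes a nonzero coefficient $\chi(a)^{tn}\l(a)^n$ to $m_n\ot v_t$, and as $t$ varies over $\{0,\ldots,s-1\}$ these leading positions run over all monomials of $W_j$. When $\g\neq 0$ the situation simplifies dramatically: $x:W_t\ra W_{t+1}$ is then invertible (its $s$-fold composition equals $\g\cdot\mathrm{id}$), so any basis of a single weight space propagates by $x$ to a basis of the entire tensor product, and each cyclic summand is isomorphic to $V(\s\l,\g)\cong V(\chi^t\s\l,\g)$ by \leref{3.7}(a). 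The genuinely delicate case is $\g=0$, where $x$ fails to be injective on weight spaces; here invertibility of the transition matrix must be established by a triangular argument, choosing an ordering of the monomial basis of $W_j$ so that the nonzero leading coefficients $\chi(a)^{tn}\l(a)^n$ form the diagonal, and verifying that the off-diagonal contributions (involving $\a$, $\b$, and higher $q$-binomials) do not break triangularity.
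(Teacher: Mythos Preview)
Your approach is correct and shares the paper's overall scaffolding: both proofs first verify that $x^s$ acts as the scalar $\g=\a\l(a)^s+\b$ on the tensor product (via $\D(x^s)=x^s\ot a^s+1\ot x^s$), both observe the weight-space decomposition $M=\oplus_t W_t$ with $x\cdot W_t\ss W_{t+1}$, and both treat the case $\g\neq 0$ by noting that $x$ is then invertible on $M$, so any basis of a single weight space propagates to a direct-sum decomposition into copies of $V(\s\l,\g)\cong V(\chi^t\s\l,\g)$.

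The genuine difference is in the case $\g=0$. The paper does \emph{not} fix explicit generators; instead it computes ${\rm Ker}(x\cdot)$ directly by writing down an $s\times s$ linear system on each weight space, checks the coefficient matrix has rank $s-1$, deduces $\dim{\rm Ker}(x\cdot)=s$, and then uses a rank--nullity chain to conclude $\dim{\rm Im}(x^{s-1}\cdot)=s$ and hence ${\rm Im}(x^{s-1}\cdot)={\rm Ker}(x\cdot)$. Only \emph{after} this structural fact does the paper choose generators $u_t$ with $x^{s-1}\cdot u_t\neq 0$ and verify their $x$-orbits are linearly independent. Your route is more direct: you commit at the outset to the specific generators $w_t=m_0\ot v_t$ and argue that the transition matrix from $\{x^n\cdot w_t\}$ to the monomial basis of each $W_j$ is triangular. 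The reason triangularity holds (which you leave as ``verifying that the off-diagonal contributions do not break triangularity'') is simply that for $0\<n\<s-1$ the first tensor factor in the $l$-th summand of $\D(x^n)\cdot(m_0\ot v_t)$ is $x^{n-l}\cdot m_0=m_{n-l}$, with \emph{no} wrap-around; hence $x^n\cdot w_t$ is supported on $m_0,\dots,m_n$ in the first slot, with the $m_n$-coefficient equal to $\chi(a)^{tn}\l(a)^n\neq 0$. Ordering $W_j$ by the first index then gives an upper-triangular matrix with nonzero diagonal. This argument in fact works uniformly for all $\g$, so your case split is less essential than the paper's; conversely, the paper's kernel computation gives slightly more structural information (the exact location of ${\rm soc}$ and ${\rm rad}$ inside $M$) at the cost of a longer calculation.
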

\begin{proof}
Let $M=V(\sigma, \a)\ot V(\l, \b)$. Let $\{m_0, m_1, \cdots, m_{s-1}\}$ and $\{v_0, v_1, \cdots, v_{s-1}\}$
be the bases of $V(\sigma, \a)$ and $V(\l, \b)$ as described before, respectively.
Then $\{m_i\ot v_j|0\<i, j\<s-1\}$ is a $k$-basis of $M$. For any $0\<i, j\<s-1$ and $g\in G$, we have
$$g\cdot(m_i\ot v_j)=g\cdot m_i\ot g\cdot v_j=(\chi^{i+j}\sigma\l)(g)m_i\ot v_j,$$
and hence $m_i\ot v_j\in M_{(\chi^{i+j}\sigma\l)}$. Thus, $M$ is a weight module. Since $|\chi|=s$,
$M=M_{(\sigma\l)}\oplus M_{(\chi\sigma\l)}\oplus\cdots\oplus M_{(\chi^{s-1}\sigma\l)}$, and
for any $0\<t\<s-1$,
$$M_{(\chi^t\sigma\l)}={\rm span}\{m_i\ot v_{t-i}, m_j\ot v_{s+t-j} \mid 0\<i\< t, t+1\<j\<s-1\}.$$
Thus, dim$(M_{(\chi^t\sigma\l)})=s$, and moreover,
$x\cdot M_{(\chi^t\sigma\l)}\subseteq M_{(\chi^{t+1}\sigma\l)}$ for all $0\<t\<s-1$.
Since $q:=\chi^{-1}(a)$ is a primitive $s$-th root of unity
and $(1\ot x)(x\ot a)=q(x\ot a)(1\ot x)$, we have
$\Delta(x^s)=x^s\ot a^s+1\ot x^s$. Now for any $0\<i, j\<s-1$, a straightforward computation shows that
$x^s\cdot(m_i\ot v_j)=(\a\l(a)^s+\b)m_i\ot v_j$.
Hence $x^s\cdot m=(\a\l(a)^s+\b)m$ for all $m\in M$.

(a) Assume $\a\l(a)^s+\b\neq 0$. Then $x\cdot: M\ra M$, $m\mapsto x\cdot m$, is a linear
automorphism of $M$. Let $\{u_1, u_2, \cdots, u_s\}$ be a basis of $M_{(\sigma\l)}$. Then
$\{x^t\cdot u_1, x^t\cdot u_2, \cdots, x^t\cdot u_s\}$ is a basis of $M_{(\chi^t\sigma\l)}$
for all $0\<t\<s-1$. It follows that $\{x^t\cdot u_i\mid 0\<t\<s-1, 1\<i\<s\}$ is a basis of
$M$. Let $U_i={\rm span}\{x^t\cdot u_i|0\<t\<s-1\}$ for any $1\<i\<s$. Then
$M=U_1\oplus U_2\oplus\cdots\oplus U_s$. It is easy to see that $U_i$ is a submodule of $M$
and $U_i\cong V(\sigma\l, \a\l(a)^s+\b)$ for all $1\<i\<s$.
By \coref{3.8}, $V(\chi^t\sigma\l, \a\l(a)^s+\b)\cong V(\sigma\l, \a\l(a)^s+\b)$
for all $0\<t\<s-1$. It follows that $V(\sigma, \a)\ot V(\l, \b)\cong\oplus_{t=0}^{s-1}V(\chi^t\s\l, \a\l(a)^s+\b)$
in this case.

(b) Assume $\a\l(a)^s+\b=0$. Let $f: M\ra M$ be the linear endomorphism of $M$ defined by
$f(m)=x\cdot m$, $m\in M$. Then $f^s=0$. Let $m\in M_{(\sigma\l)}$. Then
$$m=\a_0 m_0\ot v_0+\a_1m_1\ot v_{s-1}+\a_2m_2\ot v_{s-2}+\cdots+\a_{s-1}m_{s-1}\ot v_1$$
for some $\a_0, \a_1, \cdots, \a_{s-1}\in k$. By a straightforward computation, one gets that
$$\begin{array}{rcl}
x\cdot m&=&(\a_0+\a\l(a)q^{s-1}\a_{s-1})m_0\ot v_1+(\l(a)\a_0+\b\a_1)m_1\ot v_0+\\
&&+\sum\limits_{2\<i\<s-1}(q^{i-1}\l(a)\a_{i-1}+\a_i)m_i\ot v_{s+1-i}.\\
\end{array}$$
Thus, $x\cdot m=0$ if and only if $(\a_0, \a_1, \cdots, \a_{s-1})$ is a solution of the following
system of linear equations
$$\left\{\begin{array}{ccc}
           x_0+\a\l(a)q^{s-1}x_{s-1} & = & 0 \\
           \l(a)x_0+\b x_1 & = & 0 \\
           \l(a)q x_1+x_2  & = & 0 \\
           \l(a)q^2x_2+x_3 & = & 0 \\
           \cdots\cdots &  &  \\
           \l(a)q^{s-2}x_{s-2}+x_{s-1} & = & 0
         \end{array}\right.$$
Since $q$ is a primitive $s$-th root of unity and $\a\l(a)^s+\b=0$, one can easily check that
the rank of the coefficient matrix of the above system of linear equations is $s-1$.
It follows that ${\rm dim(Ker}(f)\cap M_{(\sigma\l)})=1$.
Similarly, one can show that ${\rm dim}({\rm Ker}(f)\cap M_{(\chi^t\sigma\l)})=1$
for all $1\<t\<s-1$. Since $x\cdot M_{(\chi^t\sigma\l)}\subseteq M_{(\chi^{t+1}\sigma\l)}$, we have
$${\rm Ker}(f)=({\rm Ker}(f)\cap M_{(\sigma\l)})\oplus({\rm Ker}(f)\cap M_{(\chi\sigma\l)})\oplus
\cdots\oplus({\rm Ker}(f)\cap M_{(\chi^{s-1}\sigma\l)})$$
and so ${\rm dim}({\rm Ker}(f))=s$.
Thus, ${\rm dim}({\rm Im}(f^i))={\rm dim}({\rm Im}(f^{i-1}))-{\rm dim}({\rm Im}(f^{i-1})\cap{\rm Ker}(f))
\>{\rm dim}({\rm Im}(f^{i-1}))-s$, which implies that ${\rm dim}({\rm Im}(f^i))\>{\rm dim}({\rm Im}(f^0))-is=s^2-is=(s-i)s$,
where $i\>1$. In particular, ${\rm dim}({\rm Im}(f^{s-1}))\>s$. Since $f^s=0$,
${\rm Im}(f^{s-1})\subseteq{\rm Ker}(f)$, and so ${\rm dim}({\rm Im}(f^{s-1}))\<{\rm dim}({\rm Ker}(f))=s$.
It follows that ${\rm dim}({\rm Im}(f^{s-1}))={\rm dim}({\rm Ker}(f))=s$. Consequently,
${\rm Im}(f^{s-1})={\rm Ker}(f)$, and
${\rm dim}({\rm Im}(f^{s-1})\cap M_{(\chi^t\s\l)})={\rm dim}({\rm Ker}(f)\cap M_{(\chi^t\s\l)})=1$
for any $0\<t\<s-1$. Since ${\rm Im}(f^{s-1})\cap M_{(\chi^t\s\l)}
\subseteq f^{s-1}(M_{(\chi^{t+1}\s\l)})$,
it follows that $f^{s-1}(M_{(\chi^t\s\l)})\neq 0$, where $0\<t\<s-1$.
Thus, for any $0\<t\<s-1$, one can choose an element $u_t\in M_{(\chi^t\s\l)}$ such that $f^{s-1}(u_t)\neq 0$.
Now let $0\<t\<s-1$. Then $u_t, f(u_{t-1}), \cdots, f^t(u_0), f^{t+1}(u_{s-1}), \cdots, f^{s-1}(u_{t+1})\in M_{(\chi^t\s\l)}$.
From $f^{s-1}(u_i)\neq 0$ and $f^s(u_i)=0$ for all $0\<i\<s-1$, one knows that the following elements
$$u_t, f(u_{t-1}), \cdots, f^t(u_0), f^{t+1}(u_{s-1}), \cdots, f^{s-1}(u_{t+1})$$
are linearly independent over $k$, and hence form  a $k$-basis of $ M_{(\chi^t\s\l)}$.
Consequently, $\{f^i(u_t)\mid 0\<i, t\<s-1\}$ is a $k$-basis of $M$.
Let $U_t={\rm span}\{f^i(u_t)\mid 0\<i\< s-1\}$ for any $0\<t\< s-1$. Then $M=U_0\oplus U_1\oplus\cdots\oplus U_{s-1}$.
One can easily check that $U_t$ is a submodule of $M$ and $U_t\cong V(\chi^t\s\l, 0)$ for any $0\<t\< s-1$.
This completes the proof.
\end{proof}

\begin{remark}
From the next section, one will see that $V(\l, \b)$ is indecomposable for any $\l\in\widehat{G}$
and $\b\in k$. Then from Corollaries 3.15, 3.16, Propositions 3.17 and 3.18,
one gets the decomposition of the tensor product of any two simple weight $H'$-modules
into the direct sum of indecomposable modules.
\end{remark}

\section{Indecomposable weight modules}\selabel{4}

In this section, we still assume that $\chi^{-1}(a)\neq 1$, and use the notations of last section.
In order to investigate the finite dimensional indecomposable weight modules over $H$ and $H'$,
we first recall some basic concepts and prove a technical lemma.

Let $A$ be a $k$-algebra, and $M$ a finite dimensional module over $A$.
The {\it radical} rad$(M)$ of $M$
is defined to be the intersection of all maximal submodules of $M$. Then one can define
rad$^i(M)$ for all $i\>0$ recursively by ${\rm rad}^0(M)=M$ and
${\rm rad}^i(M)={\rm rad}({\rm rad}^{i-1}(M))$ for $i>0$. The smallest nonnegative integer $i$
with ${\rm rad}^i(M)=0$ is called the {\it radical length} of $M$, denoted by ${\rm rl}(M)$,
and $0\subset{\rm rad}^{i-1}(M)\subset\cdots\subset{\rm rad}^2(M)\subset{\rm rad}(M)\subset M$
is called the {\it radical series} of $M$. Sometimes, the radical length of $M$ is called
the {\it Loewy length} of $M$. The {\it socle} soc$(M)$ of $M$ is defined to be the sum of all simple
submodules of $M$. Let soc$^0(M)=0$. For $j>0$, let ${\rm soc}^j(M)$ be the preimage
of ${\rm soc}(M/{\rm soc}^{j-1}(M))$ in $M$. The smallest nonnegative integer $t$ with soc$^t(M)=M$ is
called the {\it socle length} of $M$, denoted by sl$(M)$, and $0\subset{\rm soc}(M)\subset{\rm soc}^2(M)\subset
\cdots\subset{\rm soc}^{t-1}(M)\subset{\rm soc}^t(M)=M$ is called the {\it socle series} of $M$.
It is well-known that ${\rm rl}(M)={\rm sl}(M)$ (see \cite[Proposition II.4.7]{ARS}).
The {\it length} of $M$ is denoted by l$(M)$.

\begin{lemma}\lelabel{4.1}
Let $A$ be a $k$-algebra and $M$ a finite dimensional $A$-module with ${\rm rl}(M)=t>1$.
Assume that there is a linear endomorphism $\phi$ of $M$ such that
${\rm Ker}(\phi)={\rm soc}(M)$ and such that
$\phi(N)={\rm rad}(N)$ and $\phi^{-1}(N)$ is a submodule of $M$ for any submodule $N$ of $M$.
Then\\
$(a)$  ${\rm l(rad}^i(M)/{\rm rad}^{i+1}(M))\geq{\rm l(rad}^{i+1}(M)/{\rm rad}^{i+2}(M))$
for all $0\leqslant i\leqslant t-2$.\\
$(b)$ ${\rm l}(M/{\rm rad}(M))={\rm l}({\rm soc}(M))$.\\
$(c)$ If $M$ is indecomposable, then
${\rm l}(M/{\rm rad}(M))={\rm l}({\rm rad}^{t-1}(M))=1$, and $M$ is uniserial.
\end{lemma}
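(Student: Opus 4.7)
The plan is to derive all three parts from a single length identity and then reduce the indecomposable case to simplicity of the socle.

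\emph{Key identity.} For every submodule $N$ of $M$, I aim to prove
\[
{\rm l}(N/{\rm rad}(N))={\rm l}({\rm soc}(N))={\rm l}(N\cap{\rm soc}(M)).
\]
Since $\phi(N)={\rm rad}(N)$, the map $\phi|_N$ is a $k$-linear surjection $N\twoheadrightarrow{\rm rad}(N)$ whose kernel is $N\cap{\rm Ker}(\phi)=N\cap{\rm soc}(M)$. Because ${\rm soc}(M)$ is semisimple, every submodule of it is semisimple, so $N\cap{\rm soc}(M)$ is the largest semisimple submodule inside $N$, namely ${\rm soc}(N)$. The hypothesis that $\phi^{-1}$ of a submodule is a submodule, intersected with $N$, transfers to $\phi|_N$. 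Routine bookkeeping ($\phi|_N(\phi|_N^{-1}(L))=L$ for $L\subseteq{\rm rad}(N)$, and $\phi|_N^{-1}(\phi|_N(K))=K$ for $K\supseteq{\rm soc}(N)$) turns $\phi|_N$ into an inclusion-preserving bijection between submodules of ${\rm rad}(N)$ and submodules of $N$ containing ${\rm soc}(N)$; comparing the lengths of these lattices gives the identity.

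\emph{Parts (a) and (b).} Setting $N={\rm rad}^i(M)$ in the identity yields ${\rm l}({\rm rad}^iM/{\rm rad}^{i+1}M)={\rm l}({\rm soc}(M)\cap{\rm rad}^iM)$. Since ${\rm soc}(M)\cap{\rm rad}^{i+1}M\subseteq{\rm soc}(M)\cap{\rm rad}^iM$, the right side is non-increasing in $i$, proving (a). The case $i=0$ is precisely (b).

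\emph{Part (c).} Assuming $M$ is indecomposable, the plan reduces to showing ${\rm l}({\rm soc}(M))=1$. Once this holds, (b) gives ${\rm l}(M/{\rm rad}(M))=1$, and (a) together with the fact that every layer ${\rm rad}^iM/{\rm rad}^{i+1}M$ with $i<t$ is nonzero (otherwise $\phi({\rm rad}^iM)={\rm rad}^{i+1}M={\rm rad}^iM$ iterates to ${\rm rad}^tM={\rm rad}^iM\neq0$, contradicting ${\rm rl}(M)=t$) forces every such layer to be simple. A short induction on $t$, using that the hypotheses on $\phi$ descend verbatim to ${\rm rad}(M)$ (which has radical length $t-1$ and inherits a simple socle $={\rm soc}(M)\cap{\rm rad}(M)$ of length $s_1\le s_0=1$), together with the lattice isomorphism from submodules of $M$ containing ${\rm soc}(M)$ to submodules of ${\rm rad}(M)$, then shows that the submodule lattice of $M$ is a chain, so $M$ is uniserial and in particular ${\rm l}({\rm rad}^{t-1}(M))=1$.

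The main obstacle is therefore the simplicity of ${\rm soc}(M)$ under indecomposability. My plan is a contradiction argument: suppose ${\rm soc}(M)=S_1\oplus S_2$ with both $S_i\neq0$. I would exploit the iterated lattice isomorphism induced by $\phi^i$ (submodules of $M$ containing ${\rm Ker}(\phi^i)={\rm soc}^i(M)$ are in bijection with submodules of ${\rm rad}^i(M)=\phi^i(M)$). A nontrivial direct-sum splitting of a suitable semisimple image would pull back to proper submodules $M_1,M_2$ of $M$ with $M_1+M_2=M$ and $M_1\cap M_2={\rm soc}^i(M)$, giving a decomposition of $M$ modulo ${\rm soc}^i(M)$. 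Promoting this to a genuine direct-sum decomposition of $M$, thereby contradicting indecomposability, is the main technical hurdle; I would approach it by induction on the radical length $t$, again leveraging the descent of the hypotheses on $\phi$ to the submodule ${\rm rad}(M)$.
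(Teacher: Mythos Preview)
Your treatment of (a) and (b) via the key identity is correct and cleaner than the paper's: the paper proves (a) by exhibiting, for each simple summand of $\mathrm{rad}^{i+1}M/\mathrm{rad}^{i+2}M$, a simple submodule of $\mathrm{rad}^iM/\mathrm{rad}^{i+1}M$ mapping onto it, and proves (b) by pulling back a composition series of $\mathrm{rad}(M)$ through $\phi^{-1}$. Your single lattice isomorphism (submodules of $N$ containing $\mathrm{soc}(N)$ $\leftrightarrow$ submodules of $\mathrm{rad}(N)$) subsumes both and yields the sharper formula $\mathrm{l}(\mathrm{rad}^iM/\mathrm{rad}^{i+1}M)=\mathrm{l}(\mathrm{soc}(M)\cap\mathrm{rad}^iM)$, from which monotonicity is immediate.

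For (c), however, there is a genuine gap. You correctly reduce everything to showing $\mathrm{soc}(M)$ is simple, and your derivation of uniseriality \emph{from} that simplicity (via the lattice isomorphism and induction) is fine. But the step ``$M$ indecomposable $\Rightarrow$ $\mathrm{soc}(M)$ simple'' is only gestured at. Your proposed induction on $t$ using descent to $\mathrm{rad}(M)$ faces an obstruction: the hypotheses on $\phi$ do restrict to $\mathrm{rad}(M)$, but you have no reason to believe $\mathrm{rad}(M)$ is indecomposable, so you cannot invoke the inductive hypothesis. Your alternative---pulling back a splitting of some semisimple layer through $\phi^i$ to obtain $M_1+M_2=M$ with $M_1\cap M_2=\mathrm{Ker}(\phi^i)$---lands you exactly where the difficulty lies: promoting this to an honest direct sum is nontrivial and is precisely what consumes most of the paper's proof. (Incidentally, your parenthetical claim $\mathrm{Ker}(\phi^i)=\mathrm{soc}^i(M)$ is unjustified; it is not clear that $\phi^{-1}(\mathrm{soc}(M))/\mathrm{soc}(M)$ equals $\mathrm{soc}(M/\mathrm{soc}(M))$, since $\phi$ is not a module map.)

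The paper's route is the following. It first shows that if the layer lengths $s_i$ drop at some index $i\le t-2$, then $V=\mathrm{rad}^i(M)$ contains a simple $U$ with $U\cap\mathrm{rad}(V)=0$, whence $V=U\oplus W$. It then lifts this splitting back to $M$ through $\varphi=\phi^i$: setting $M_j=\varphi^{-1}$ of the two pieces gives only $M_1\cap M_2=\mathrm{Ker}(\varphi)$, so the paper chooses submodules $N_1\subseteq M_1$, $N_2\subseteq M_2$ \emph{minimal} with $\varphi(N_1)=U$, $\varphi(N_2)=W$, proves $N_1\cap N_2=\mathrm{rad}(N_1)\cap\mathrm{rad}(N_2)$, and then iterates this intersection identity down the radical filtration to conclude $N_1\cap N_2=\mathrm{rad}^i(N_1)\cap\mathrm{rad}^i(N_2)=U\cap W=0$. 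This minimality-plus-iteration argument is the missing ingredient in your proposal; without it (or a genuine substitute), the proof of (c) is incomplete.
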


\begin{proof}
(a) Let $0\<i\<t-2$. Then $\phi({\rm rad}^i(M))={\rm rad}^{i+1}(M)$.
By the restriction of $\phi$ on ${\rm rad}^i(M)$, one gets a linear
epimorphism $\phi_i :{\rm rad}^i(M)\ra{\rm rad}^{i+1}(M)$. Moreover,
$\phi_i({\rm rad}^{i+1}(M))={\rm rad}^{i+2}(M)$, and hence $\phi_i$ induces a linear
epimorphism
$$\psi_i: {\rm rad}^i(M)/{\rm rad}^{i+1}(M)\ra
{\rm rad}^{i+1}(M)/{\rm rad}^{i+2}(M), \
v+{\rm rad}^{i+1}(M)\mapsto \phi_i(x)+{\rm rad}^{i+2}(M).$$
Let $V$ be a simple submodule of ${\rm rad}^{i+1}(M)/{\rm rad}^{i+2}(M)$.
Then one can check that $\psi_i^{-1}(V)$ is a submodule of ${\rm rad}^{i}(M)/{\rm rad}^{i+1}(M)$.
Since ${\rm rad}^i(M)/{\rm rad}^{i+1}(M)$ is semisimple,
$\psi_i^{-1}(V)=\sum_{j=1}^nW_i$ for some simple submodules
$W_1, W_2, \cdots, W_n$ of ${\rm rad}^i(M)/{\rm rad}^{i+1}(M)$.
Then for any $1\<j\<n$, $\psi_i(W_j)$ is a submodule of ${\rm rad}^{i+1}(M)/{\rm rad}^{i+2}(M)$
and $\psi_i(W_j)\subseteq V$, and so $\psi_i(W_j)=0$ or
$\psi_i(W_j)=V$ since $V$ is simple. However, $V=\psi_i(\psi_i^{-1}(V))
=\sum_{j=1}^n\psi_i(W_j)$, which implies that $\psi_i(W_j)=V$ for some $j$.

Since ${\rm rad}^{i+1}(M)/{\rm rad}^{i+2}(M)$ is semisimple,
${\rm rad}^{i+1}(M)/{\rm rad}^{i+2}(M)=\oplus_{j=1}^lV_j$ for some simple
submodules $V_1, V_2, \cdots, V_l$ of ${\rm rad}^{i+1}(M)/{\rm rad}^{i+2}(M)$.
By the above discussion, there are simple submodules $U_1, U_2, \cdots, U_l$
of ${\rm rad}^i(M)/{\rm rad}^{i+1}(M)$ such that $\psi_i(U_j)=V_j$, $1\<j\<l$.
Obviously, the sum $U_1+U_2+\cdots+U_l$ is direct. It follows that
${\rm l(rad}^i(M)/{\rm rad}^{i+1}(M))\>{\rm l(rad}^{i+1}(M)/{\rm rad}^{i+2}(M))$.

(b) Since $\phi(M)={\rm rad}(M)$, $\phi$ can be regarded as a linear epimorphism
from $M$ to ${\rm rad}(M)$. Let $0=U_0\subset U_1\subset U_2\subset\cdots\subset U_n={\rm rad}(M)$
be a composition series of ${\rm rad}(M)$, and let $M_i=\phi^{-1}(U_i)$ for all $0\<i\<n$.
Then by the hypothesis, ${\rm soc}(M)=M_0\subset M_1\subset M_2\subset\cdots\subset M_n=M$
is an ascending chain of submodules of $M$. Let $1\<i\<n$, and let $N$ be a submodule of $M$ with
$M_{i-1}\subseteq N\subseteq M_i$. Then $\phi(N)$ is a submodule ${\rm rad}(M)$ and
$U_{i-1}\subseteq\phi(N)\subseteq U_i$, which implies that $\phi(N)=U_{i-1}$ or
$\phi(N)=U_{i}$. Since ${\rm Ker}(\phi)=M_0\subseteq N$, $\phi^{-1}(\phi(N))=N$, and so
$N=M_{i-1}$ or $N=M_i$. It follows that ${\rm l}(M/{\rm soc}(M))={\rm l}({\rm rad}(M))$,
and so  ${\rm l}(M/{\rm rad}(M))={\rm l}({\rm soc}(M))$.

(c) Assume that $M$ is indecomposable. We first show that ${\rm l}(M/{\rm rad}(M))={\rm l}({\rm rad}^{t-1}(M))$.
By contrary, suppose that ${\rm l}(M/{\rm rad}(M))\neq{\rm l}({\rm rad}^{t-1}(M))$.
Then by Part (a), there is an integer $i$ with $0\leq i\leq t-2$ such that
$$\begin{array}{rcl}
{\rm l}(M/{\rm rad}(M))&=&{\rm l}({\rm rad}(M)/{\rm rad}^2(M))=\cdots\\
&=&{\rm l}({\rm rad}^i(M)/{\rm rad}^{i+1}(M))\\
&>&{\rm l}({\rm rad}^{i+1}(M)/{\rm rad}^{i+2}(M)).\\
\end{array}$$
Let $V={\rm rad}^i(M)$. Then rl$(V)=t-i$ and
${\rm l}(V/{\rm rad}(V))>{\rm l}({\rm rad}(V)/{\rm rad}^2(V))
\geqslant{\rm l}({\rm rad}^{t-i-1}(V))$.
Since $\phi(V)={\rm rad}(V)\subset V$, one gets a linear map
$\phi|_V: V\ra V$ by the restriction of $\phi$ on $V$. Obviously,
$(V, \phi|_V)$ satisfies all the hypotheses on $(M, \phi)$ in the lemma.
Thus, by Part (b), we have that
l(soc$(V))={\rm l}(V/{\rm rad}(V))={\rm l}(M/{\rm rad}(M))={\rm l}({\rm soc}(M))$,
and hence ${\rm soc}(V)={\rm soc}(M)$. By the proof of Part (a),
the map $\psi_i: V/{\rm rad}(V)\ra {\rm rad}(V)/{\rm rad}^2(V)$,
$v+{\rm rad}(V)\mapsto \phi_i(v)+{\rm rad}^2(V)$, is surjective.
Obviously, Ker$(\psi_i)=({\rm soc}(V)+{\rm rad}(V))/{\rm rad}(V)$.
Then an argument similar to Part (b) shows that l$((V/{\rm rad}(V))/{\rm Ker}(\psi_i))
={\rm l}({\rm rad}(V)/{\rm rad}^2(V))$.
Since ${\rm l}(V/{\rm rad}(V))>{\rm l}({\rm rad}(V)/{\rm rad}^2(V))$,
${\rm Ker}(\psi_i)\neq 0$, and so ${\rm soc}(V)+{\rm rad}(V)\neq {\rm rad}(V)$.
Thus, there exists a nonzero submodule $U$ of soc$(V)$ such that
soc$(M)={\rm soc}(V)=U\oplus ({\rm soc}(V)\cap{\rm rad}(V))$,
and consequently, ${\rm soc}(V)+{\rm rad}(V)=U\oplus{\rm rad}(V)$.
Consider  the canonical module epimorphism $\pi: V\ra V/{\rm rad}(V)$.
Then $\pi(U)=(U\oplus{\rm rad}(V))/{\rm rad}(V)\cong U$.
Since $V/{\rm rad}(V)$ is semisimple, there is a submodule
$W/{\rm rad}(V)$ of $V/{\rm rad}(V)$ such that
$V/{\rm rad}(V)=\pi(U)\oplus W/{\rm rad}(V)$, where $W$ is a submodule
of $V$ with ${\rm rad}(V)\subseteq W$. It follows that $V=U\oplus W$.
Note that $W/{\rm rad}(V)\neq 0$
since ${\rm soc}(V)+{\rm rad}(V)\neq V$.

Now we consider the linear map
$$\varphi: M\ra V,\ \varphi(m)=\phi^i(m).$$
Since $\phi^i(M)={\rm rad}^i(M)=V$ and $\phi^i({\rm rad}(M))={\rm rad}(V)$,
$\varphi$ is surjective and $\varphi$ induces a linear epimorphism
$\ol{\varphi}: M/{\rm rad}(M)\ra V/{\rm rad}(V), \ m+{\rm rad}(M)\mapsto\varphi(m)+{\rm rad}(V)$.
Then an argument similar to Part (b) shows that
l$((M/{\rm rad}(M))/{\rm Ker}(\ol{\varphi}))
={\rm l}(V/{\rm rad}(V))$.
Hence ${\rm Ker}(\ol{\varphi})=0$ by l$(M/{\rm rad}(M))$=l$(V/{\rm rad}(V))$,
and so $\ol{\varphi}$ is bijective.
It follows that ${\rm Ker}(\varphi)\subseteq{\rm rad}(M)$.

Let $M_1=\varphi^{-1}(U)$ and $M_2=\varphi^{-1}(W)$.
Then $M_1$ and $M_2$ are both submodules of $M$.
Moreover, $M_1+M_2=M$ and $M_1\cap M_2={\rm Ker}(\varphi)$.
Consider the restriction of $\varphi$ on $M_1$, one gets a linear
epimorphism $\varphi|_{M_1}: M_1\ra U$, which induces a linear isomorphism
$$\varphi_1: M_1/{\rm Ker}(\varphi)\ra U, \ m+{\rm Ker}(\varphi)\mapsto\varphi(m).$$
For any simple submodule $U_1$ of $U$, one can check that
$\varphi_1^{-1}(U_1)$ is a simple submodule of $M_1/{\rm Ker}(\varphi)$.
It follows that $M_1/{\rm Ker}(\varphi)$ is semisimple since $U$ is semisimple.

Let $N_1$ be a submodule of $M_1$ minimal with respect to $\varphi(N_1)=U$.
Then $M_1=N_1+{\rm Ker}(\varphi)$, and hence $M=M_1+M_2=N_1+{\rm Ker}(\varphi)+M_2=N_1+M_2$
by ${\rm Ker}(\varphi)\subseteq M_2$.
Since $N_1/(N_1\cap {\rm Ker}(\varphi))\cong M_1/{\rm Ker}(\varphi)$,
$N_1/(N_1\cap {\rm Ker}(\varphi))$ is nonzero and semisimple, and hence
${\rm rad}(N_1)\subseteq N_1\cap{\rm Ker}(\varphi)\subsetneq N_1$.
We claim that ${\rm rad}(N_1)=N_1\cap{\rm Ker}(\varphi)$. In fact, if
${\rm rad}(N_1)\neq N_1\cap{\rm Ker}(\varphi)$, then $(N_1\cap{\rm Ker}(\varphi))/{\rm rad}(N_1)$
is a nontrivial submodule of $N_1/{\rm rad}(N_1)$. Hence
$N_1/{\rm rad}(N_1)=M'/{\rm rad}(N_1)\oplus(N_1\cap{\rm Ker}(\varphi))/{\rm rad}(N_1)$,
where $M'$ is a proper submodule of $N_1$ containing ${\rm rad}(N_1)$.
In this case, $N_1=M'+(N_1\cap{\rm Ker}(\varphi))$, and so $\varphi(M')=\varphi(N_1)=U$,
which contradicts the minimality of $N_1$. This shows the claim.
Hence
%$N_1/{\rm rad}(N_1)\cong M_1/{\rm Ker}(\varphi)$, and
$N_1\cap M_2=N_1\cap M_1\cap M_2
=N_1\cap{\rm Ker}(\varphi)={\rm rad}(N_1)$.

Let $N_2$ be a submodule of $M_2$ minimal with respect to $\varphi(N_2)=W$.
Then $M_2=N_2+{\rm Ker}(\varphi)$. Hence $M=N_1+M_2=N_1+N_2+{\rm Ker}(\varphi)
\subseteq N_1+N_2+{\rm rad}(M)\subseteq M$, and so $M=N_1+N_2+{\rm rad}(M)$.
This implies that $M=N_1+N_2$.
Now consider the restriction $\varphi|_{N_2}: N_2\ra W$, which is a linear epimorphism.
Since $\varphi|_{N_2}({\rm rad}(N_2))=\varphi(\phi(N_2))=\phi(\varphi(N_2))=\phi(W)={\rm rad}(W)$,
$\varphi|_{N_2}$ induces a linear epimorphism
$\ol{\varphi|_{N_2}}: N_2/{\rm rad}(N_2)\ra W/{\rm rad}(W)$.
Since $N_2/{\rm rad}(N_2)$ is semisimple and ${\rm Ker}(\ol{\varphi|_{N_2}})$
is a submodule of $N_2/{\rm rad}(N_2)$,
there is a submodule $L/{\rm rad}(N_2)$ of $N_2/{\rm rad}(N_2)$ such that
$N_2/{\rm rad}(N_2)={\rm Ker}(\ol{\varphi|_{N_2}})\oplus(L/{\rm rad}(N_2))$.
In this case, $\ol{\varphi|_{N_2}}(L/{\rm rad}(N_2))=W/{\rm rad}(W)$. This means that
$\varphi(L)+{\rm rad}(W)=W$, and hence $\varphi(L)=W$. By the minimality of $N_2$, we have $L=N_2$,
and so ${\rm Ker}(\ol{\varphi|_{N_2}})=0$. This shows that
$\ol{\varphi|_{N_2}}$ is bijective.
Therefore, ${\rm Ker}(\varphi)\cap N_2\subseteq{\rm rad}(N_2)$.
Since $M_1\cap M_2={\rm Ker}(\varphi)$,
$N_1\cap N_2=N_1\cap{\rm Ker}(\varphi)\cap N_2={\rm rad}(N_1)\cap{\rm Ker}(\varphi)\cap N_2
\subseteq {\rm rad}(N_1)\cap{\rm rad}(N_2)\subseteq N_1\cap N_2$. Hence
$N_1\cap N_2={\rm rad}(N_1)\cap{\rm rad}(N_2)$.
Let $1\<j<i$ and suppose that $N_1\cap N_2={\rm rad}^t(N_1)\cap{\rm rad}^t(N_2)$ for all $1\<t\<j$.
Since $M=N_1+N_2$, we have ${\rm rad}^j(M)=\phi^j(M)=\phi^j(N_1)+\phi^j(N_2)
={\rm rad}^j(N_1)+{\rm rad}^j(N_2)$.
Let $M'={\rm rad}^j(M)$, $N_1'={\rm rad}^j(N_1)$ and $N_2'={\rm rad}^j(N_2)$.
Consider the linear epimorphism
$$\theta: M'\ra V=U\oplus W,\ \theta(m)=\phi^{i-j}(m).$$
Then $\theta(N_1')=U$ and $\theta(N_2')=W$.
Suppose that there is a proper submodule $L$ of $N_1'$ such that $\theta(L)=U$.
Note that the map $\d: N_1\ra N_1'$, $v\mapsto\phi^j(v)$ is a linear epimorphism.
Let $K=\d^{-1}(L)$. Then $K$ is a proper submodule of $N_1$ and $\d(K)=L$. Hence
$\varphi(K)=(\theta\d)(K)=\theta(L)=U$, which contradicts the minimality
of $N_1$. This shows that $N_1'$ is a submodule of $\theta^{-1}(U)$ minimal
with respect to $\theta(N_1')=U$. Similarly, one can show that $N_2'$ is a
submodule of $\theta^{-1}(W)$ minimal with respect to $\theta(N_2')=W$. Then a similar argument as above
shows that $N_1'\cap N_2'={\rm rad}(N_1')\cap{\rm rad}(N_2')$. That is,
${\rm rad}^j(N_1)\cap{\rm rad}^j(N_2)={\rm rad}^{j+1}(N_1)\cap{\rm rad}^{j+1}(N_2)$.
Therefore, $N_1\cap N_2={\rm rad}^t(N_1)\cap{\rm rad}^t(N_2)$ for all $1\<t\<i$.
In particular, $N_1\cap N_2={\rm rad}^i(N_1)\cap{\rm rad}^i(N_2)=U\cap W=0$,
and so $M=N_1\oplus N_2$, a contradiction since $M$ is indecomposable.
This shows that ${\rm l}(M/{\rm rad}(M))={\rm l}({\rm rad}^{t-1}(M))$.

Since ${\rm rad}^{t}(M)=0$, ${\rm rad}^{t-1}(M)\subseteq{\rm soc}(M)$,
and so ${\rm rad}^{t-1}(M)={\rm soc}(M)$ by Part (b).
Then from the above proof, one can see that ${\rm rad}^{t-1}(M)$ is indecomposable,
and so ${\rm l}({\rm rad}^{t-1}(M))=1$.
Thus, $M$ is uniserial by Part (a).
\end{proof}

Now we return to consider the representations of $H$.
For any $t\>1$ and $\l\in\widehat{G}$, let $J_t(\l)$ be the submodule of $M(\l)$ generated by $x^t\cdot v_{\l}$,
and define $V_t(\l):=M(\l)/J_t(\l)$ to be the corresponding quotient module.
Then $J_t(\l)=x^t\cdot M(\l)={\rm span}\{x^i\cdot v_{\l}\mid i\>t\}$. Let $m_i$ be the image of $x^i\cdot v_{\l}$
under the canonical epimorphism $M(\l)\ra V_t(\l)$ for all $i\>0$. Then $m_i=0$ for all $i\>t$, and
$V_t(\l)$ is a $t$-dimensional vector space over $k$ with a basis $\{m_0, m_1, \cdots, m_{t-1}\}$.
One can easily check that the $H$-action on $V_t(\l)$ is determined by
$$ g\cdot m_i=\chi^i(g)\l(g)m_i,\  x\cdot m_i=m_{i+1}, 0\<i\<t-2, \ x\cdot m_{t-1}=0.$$

\begin{proposition}\prlabel{4.2}
Let $\l\in \widehat{G}$. Then $V_1(\l)\cong V_{\l}$. Furthermore, if $|\chi|=\oo$,
then $J_1(\l)$ is the unique maximal submodule of $M(\l)$.
\end{proposition}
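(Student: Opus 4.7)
The first assertion is essentially a direct unpacking of definitions. By construction, $V_1(\l)=M(\l)/J_1(\l)$, where $J_1(\l)=x\cdot M(\l)=\mathrm{span}\{x^i\cdot v_\l\mid i\>1\}$. So $V_1(\l)$ is one-dimensional with basis $\{m_0\}$, and the prescribed $H$-action from the paragraph preceding the proposition gives $g\cdot m_0=\l(g)m_0$ and $x\cdot m_0=m_1=0$. This matches the definition of $V_\l$ exactly, so the map $m_0\mapsto v$ (for any chosen $0\neq v\in V_\l$) extends to an $H$-module isomorphism.

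For the second assertion, the plan is to show that every nonzero proper submodule $N$ of $M(\l)$ is contained in $J_1(\l)$; since $J_1(\l)$ itself is proper (the quotient $V_1(\l)$ is one-dimensional, hence nonzero), this immediately makes it the unique maximal submodule. The key input is that $M(\l)$ is a weight module with weight-space decomposition
\[
M(\l)=\bigoplus_{i\>0}k\cdot(x^i\cdot v_\l),\qquad x^i\cdot v_\l\in M(\l)_{(\chi^i\l)},
\]
as noted in the proof of \prref{3.4}(a). Because any submodule of a weight module is again a weight module (already remarked in \seref{3}), we get $N=\bigoplus_{i\>0}N_{(\chi^i\l)}$ with $N_{(\chi^i\l)}\ss k\cdot(x^i\cdot v_\l)$.

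Now the hypothesis $|\chi|=\oo$ is used exactly once, to ensure that the characters $\{\chi^i\l\mid i\>0\}$ are pairwise distinct, so that each one-dimensional summand $k\cdot(x^i\cdot v_\l)$ is genuinely the full $\chi^i\l$-weight space. The only obstacle to conclude is the case $N_{(\l)}\neq 0$: in that case $v_\l\in N$ (up to a scalar), and by \leref{3.5} $N\supseteq H\cdot v_\l=M(\l)$, forcing $N=M(\l)$ and contradicting properness. Hence $N_{(\l)}=0$, which forces $N\ss\bigoplus_{i\>1}k\cdot(x^i\cdot v_\l)=J_1(\l)$, completing the argument.

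The only mildly delicate point is making sure the weight decomposition really does split $N$ into one-dimensional pieces indexed by $i\>0$; this is where $|\chi|=\oo$ is essential, since otherwise several $x^i\cdot v_\l$ share the same weight and one can build nontrivial submodules inside a single weight space that are not of the form $J_t(\l)$. Once the distinctness of the characters $\chi^i\l$ is in hand, the proof reduces to the one-line observation about $N_{(\l)}$ above.
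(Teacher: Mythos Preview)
Your proof is correct, but it takes a different route from the paper's for the second assertion. The paper argues at the level of simple quotients: given any maximal submodule $N$, the quotient $M(\l)/N$ is a simple weight module, and by \coref{3.11}(a) (the classification of simple weight modules when $|\chi|=\oo$) it must be one-dimensional with $x$ acting trivially, forcing $J_1(\l)=x\cdot M(\l)\subseteq N$ and hence $N=J_1(\l)$. You instead work directly with the weight decomposition of $M(\l)$, using that the one-dimensional weight spaces $k\cdot(x^i\cdot v_\l)$ are indexed by distinct characters when $|\chi|=\oo$, and conclude that any proper submodule already misses $v_\l$ and hence sits inside $J_1(\l)$. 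Your argument is more elementary (it avoids the classification result) and in fact proves the stronger statement that $J_1(\l)$ contains \emph{every} proper submodule, not just the maximal ones; the paper's approach, on the other hand, illustrates how the structure of simple quotients controls maximal submodules, which is the viewpoint used repeatedly later in the section.
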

\begin{proof}
Since $J_1(\l)=x\cdot M(\l)$, it follows from \prref{3.4}(b) that
$J_1(\l)$ is a maximal submodule of $M(\l)$ and $V_1(\l)\cong V_{\l}$.
Now assume $|\chi|=\oo$. Let $N$ be a maximal submodule of $M(\l)$.
Then $M(\l)/N$ is a simple weight module.
It follows from \coref{3.11}(a) that $M(\l)/N$ is one dimensional, and $x\cdot(M(\l)/N)=0$.
This implies that $J_1(\l)=x\cdot M(\l)\subseteq N$. Therefore, $N=J_1(\l)$ since
both $N$ and $J_1(\l)$ are maximal submodule of $M(\l)$. It follows that
$J_1(\l)$ is the unique maximal submodule of $M(\l)$.
\end{proof}

\begin{proposition}\prlabel{4.3}
Let $\l \in \widehat{G}$ and $t\>1$. Then\\
{\rm (a)} $V_t(\l)$ is simple if and only if $t=1$.\\
{\rm (b)} $V_t(\l)$ is an indecomposable and uniserial weight module.
Moreover, ${\rm l}(V_t(\l))=t$.
\end{proposition}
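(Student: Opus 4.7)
The plan is to determine explicitly every $H$-submodule of $V_t(\l)$ and show that they form a single chain; uniseriality, indecomposability, length, and the simplicity criterion all fall out immediately. For $0\leq i\leq t$ set $N_i:={\rm span}\{m_i,m_{i+1},\ldots,m_{t-1}\}$, so $N_0=V_t(\l)$ and $N_t=0$. Each $N_i$ is an $H$-submodule: it is $G$-stable since every $m_j$ is a weight vector, and $x\cdot m_j\in\{m_{j+1},0\}\subseteq N_i$ whenever $j\geq i$.

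Part (a) is then immediate: $V_1(\l)\cong V_\l$ is simple by \prref{4.2} and \leref{3.3}(a), while for $t\geq 2$ the submodule $N_1=x\cdot V_t(\l)$ is both nonzero and proper. For Part (b), the key step is to show that the $N_i$ exhaust all $H$-submodules. Given a nonzero submodule $U$, for each $0\neq u\in U$ write $u=\sum_{i=j(u)}^{t-1}\a_i m_i$ with $\a_{j(u)}\neq 0$, and set $j_0:=\min\{j(u)\mid 0\neq u\in U\}$; choose some $u\in U$ realizing this minimum.

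Since $x$ acts as the shift $m_i\mapsto m_{i+1}$ for $0\leq i\leq t-2$ and kills $m_{t-1}$, one obtains $x^{t-1-j_0}\cdot u=\a_{j_0}m_{t-1}\in U$, whence $m_{t-1}\in U$. Then $x^{t-2-j_0}\cdot u-\a_{j_0+1}m_{t-1}=\a_{j_0}m_{t-2}\in U$ gives $m_{t-2}\in U$, and a descending induction on the index yields $m_i\in U$ for all $j_0\leq i\leq t-1$. Hence $N_{j_0}\subseteq U$, while the reverse inclusion is the definition of $j_0$, so $U=N_{j_0}$. Thus the submodule lattice is precisely the chain $0=N_t\subsetneq N_{t-1}\subsetneq\cdots\subsetneq N_0=V_t(\l)$, forcing $V_t(\l)$ to be uniserial, indecomposable, and of length $t$. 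No step is a real obstacle; the mild care required is to run the induction purely from the triangular action of $x$ on the basis, thereby avoiding any case split on whether the weights $\chi^i\l$ are distinct (equivalently, on $|\chi|$ versus $t$), which would otherwise complicate a weight-space-based argument.
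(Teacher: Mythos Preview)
Your proof is correct and the core mechanism---reading off submodules from the triangular action of $x$ on the basis $\{m_0,\ldots,m_{t-1}\}$---coincides with the paper's. The only difference is organizational: the paper first handles the case $|\chi|\geq t$ separately via the weight-space decomposition (there each $V_t(\l)_{(\chi^i\l)}=km_i$ is one-dimensional, so any submodule is visibly a span of some of the $m_i$), and only invokes the triangular argument when $|\chi|=s<t$. Your version runs the triangular argument uniformly, defining $j_0$ as the minimal leading index appearing in $U$ rather than as $t-i$ for the nilpotency index $i$ of $x$ on $U$; these are the same number, and your formulation is slightly more economical precisely because it sidesteps the case split you flag at the end.
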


\begin{proof}
Let $\{m_0, m_1, \cdots, m_{t-1}\}$ be the basis of $V_t(\l)$ given as above.
Then $V_t(\l)$ is a cyclic $H$-module generated by $m_0$.

(a) $V_1(\l)$ is simple by \prref{4.2}. If $t>1$, then
$km_{t-1}$ is a non-trivial submodule of $V_t(\l)$, and
hence $V_t(\l)$ is not simple.

(b) Obviously, $V_t(\l)$ is a weight module.
For any $0\<j\<t-1$, one can easily see that $M_j={\rm span}\{m_i\mid j\<i\<t-1\}$
is a submodule of $V_t(\l)$ of dimension $t-j$. Obviously,
$$V_t(\l)=M_0\supset M_1\supset M_2\supset\cdots\supset M_{t-1}\supset M_t=0$$
is a composition series of $V_t(\l)$ and $M_j/M_{j+1}\cong V_{\chi^j\l}$, $0\<j\<t-1$.
Hence l$(V_t(\l))=t$.

If $|\chi|\>t$, then
$V_t(\l)=\oplus_{i=0}^{t-1}V_t(\l)_{(\chi^i\l)}$ and $V_t(\l)_{(\chi^i\l)}=km_i$ for all
$0\leq i\leq t-1$. Let $M$ be a nonzero submodule of $V_t(\l)$.
Then $M$ is a direct sum of those $km_i$ which lie in $M$. Since $M\neq 0$,
$m_i\in M$ for some $0\leq i\leq t-1$. Let $i={\rm min}\{j|m_j\in M\}$.
Then obviously, $M={\rm span}\{m_j|j\geq i\}=M_i$. Hence $V_t(\l)$ is uniserial,
and so $V_t(\l)$ is indecomposable.

Now assume $|\chi|=s<t$. Note that $s>1$ in this case.
Let $M$ be a non-zero submodule of $V_t(\l)$.
Then $x^t\cdot M\subseteq x^t\cdot V_t(\l)=0$.
Hence there is an integer $i$ with $1\<i\<t$ such that $x^i\cdot M=0$ but $x^{i-1}\cdot M\neq 0$.
If $i=t$, then $M\subseteq V_t(\l)=M_0=M_{t-i}$. If $i<t$ and
$m=\a_0 m_0+\a_1 m_1+\cdots +\a_{t-1}m_{t-1}\in M$, then
$x^i\cdot m=\a_0 m_i+\a_1 m_{i+1}+\cdots +\a_{t-i-1}m_{t-1}=0$ by $x^i\cdot M=0$,
which implies that $\a_0=\a_1=\cdots=\a_{t-i-1}=0$. Hence $m=\a_{t-i} m_{t-i}+\cdots +\a_{t-1}m_{t-1}\in M_{t-i}$,
and so $M\subseteq M_{t-i}$. Thus, we have proven that $M\subseteq M_{t-i}$.
Since $x^{i-1}\cdot M\neq 0$, one can choose an element $m\in M$
such that $x^{i-1}\cdot m\neq 0$. From $M\subseteq M_{t-i}$, we have $m=\a_{t-i} m_{t-i}+\cdots +\a_{t-1}m_{t-1}$
for some $\a_{t-i}, \cdots, \a_{t-1}\in k$. From $x^{i-1}\cdot m=\a_{t-i}m_{t-1}\neq 0$, one gets that $\a_{t-i}\neq0$.
Now we have
$$\begin{array}{lcr}
m&=&\a_{t-i} m_{t-i}+\a_{t-i+1}m_{t-i+1}\cdots +\a_{t-1}m_{t-1}\\
x\cdot m&=&\a_{t-i}m_{t-i+1}\cdots +\a_{t-2}m_{t-1}\\
\cdots&\cdots&\cdots\cdots\cdots\cdots\\
x^{i-1}\cdot m&=&\a_{t-i}m_{t-1}\\
\end{array}$$
Since $\a_{t-i}\neq0$ and $m, x\cdot m, \cdots, x^{i-1}\cdot m\in M$, one knows that
$ m_{t-i}, m_{t-i+1}, \cdots, m_{t-1}\in M$, and so $M_{t-i}\subseteq M$. Thus, $M=M_{t-i}$.
It follows that $V_t(\l)$ is a uniserial $H$-module, and consequently,
$V_t(\l)$ is indecomposable.
\end{proof}

By a straightforward verification, we have the following \prref{4.4}.

\begin{proposition}\prlabel{4.4}
Let $\sigma ,\l \in \widehat{G}$ and $t, l\>1$. Then
$V_t(\sigma)\cong V_l(\l)$ if and only if
$t=l$ and $\sigma=\l$.
\end{proposition}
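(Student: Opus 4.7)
The ``if'' direction is immediate, so the content is in the ``only if'' direction. Suppose $\phi\colon V_t(\s)\ra V_l(\l)$ is an isomorphism of $H$-modules. My plan is to extract $t$ and $\s$ from $V_t(\s)$ as module-theoretic invariants.

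First, since $V_t(\s)$ has $k$-basis $\{m_0,m_1,\cdots,m_{t-1}\}$ by construction, comparing dimensions of the two sides of $\phi$ gives $t=l$ at once. Alternatively, one can appeal to \prref{4.3}(b), which says ${\rm l}(V_t(\s))=t$ and ${\rm l}(V_l(\l))=l$, and isomorphic modules have the same composition length.

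Second, to recover the weight $\s$, I would use either the top or the socle, both of which are canonically attached to the module. By \prref{4.3}(b), $V_t(\s)$ is uniserial with composition series $V_t(\s)=M_0\supset M_1\supset\cds\supset M_{t-1}\supset 0$, where $M_j={\rm span}\{m_i\mid j\<i\<t-1\}$ and $M_j/M_{j+1}\cong V_{\chi^j\s}$. In particular, ${\rm soc}(V_t(\s))=M_{t-1}=km_{t-1}\cong V_{\chi^{t-1}\s}$, and any isomorphism $\phi$ restricts to an isomorphism between the socles. Hence $V_{\chi^{t-1}\s}\cong V_{\chi^{t-1}\l}$ as $H$-modules, and \leref{3.3}(b) forces $\chi^{t-1}\s=\chi^{t-1}\l$ in $\widehat{G}$. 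Cancelling $\chi^{t-1}$ in the abelian group $\widehat{G}$ then gives $\s=\l$.

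There is no real obstacle here: the argument is entirely a matter of assembling previously proved facts (\leref{3.3}(b) for simple weight modules of dimension one, and the structure of the composition series established in \prref{4.3}(b)). If one prefers the top instead of the socle, the quotient $V_t(\s)/M_1\cong V_\s$ gives $\s=\l$ directly without the cancellation step. Either way, the proposition follows in a few lines.
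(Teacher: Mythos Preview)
Your argument is correct. The paper omits the proof entirely, saying only ``By a straightforward verification, we have the following \prref{4.4}'', and your use of dimension (or length) together with the top $V_t(\s)/{\rm rad}(V_t(\s))\cong V_{\s}$ or the socle is precisely the kind of verification intended.
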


\begin{lemma}\lelabel{4.5}
Let $M\in\mathcal{W}$ be indecomposable. Then $[\s]=[\l]$ for any $\s, \l\in\Pi(M)$.
\end{lemma}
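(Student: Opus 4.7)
The plan is to exhibit $M$ as a direct sum of $H$-submodules indexed by the cosets of $\langle\chi\rangle$ represented in $\Pi(M)$, and then invoke indecomposability.

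First I would verify the key weight-shift property: for any weight vector $v \in M_{(\lambda)}$, the element $x\cdot v$ lies in $M_{(\chi\lambda)}$. This follows directly from the defining relation $xg = \chi^{-1}(g) g x$, equivalently $gx = \chi(g) x g$ in $H$; indeed, for every $g \in G$,
\begin{equation*}
g\cdot(x\cdot v) \;=\; (gx)\cdot v \;=\; \chi(g)\, x\cdot(g\cdot v) \;=\; \chi(g)\lambda(g)\,(x\cdot v).
\end{equation*}
Thus $x\cdot M_{(\lambda)} \subseteq M_{(\chi\lambda)}$, while obviously $G\cdot M_{(\lambda)} \subseteq M_{(\lambda)}$.

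Next, for each coset $[\lambda] \in \widehat{G}/\langle\chi\rangle$, set
\begin{equation*}
M_{[\lambda]} \;:=\; \bigoplus_{\sigma\in\Pi(M),\,[\sigma]=[\lambda]} M_{(\sigma)}.
\end{equation*}
The step above shows that $M_{[\lambda]}$ is stable under both $G$ and $x$: the action of $G$ preserves each weight space, while $x$ shifts the weight by $\chi$, which stays inside the same coset of $\langle\chi\rangle$. Since $G$ and $x$ generate $H$ as an algebra, $M_{[\lambda]}$ is an $H$-submodule.

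Finally, because $M$ is a weight module, $M = \bigoplus_{[\lambda]} M_{[\lambda]}$, where the sum runs over the finitely or infinitely many cosets $[\lambda]$ meeting $\Pi(M)$. Indecomposability of $M$ forces exactly one summand to be nonzero, i.e.\ there is a single coset $[\lambda_0]$ with $\Pi(M) \subseteq [\lambda_0]$. Hence $[\sigma]=[\lambda]$ for all $\sigma,\lambda\in\Pi(M)$. There is no real obstacle here; the only point demanding care is the initial commutation calculation that puts $x$ into the right weight-shifting form, so one should be attentive to the convention $H = kG(\chi^{-1},a,\delta)$ chosen at the start of \seref{3}.
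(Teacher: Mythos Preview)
Your proof is correct and follows essentially the same approach as the paper's own proof: both define $M_{[\lambda]}:=\bigoplus_{\sigma\in\Pi(M),\,[\sigma]=[\lambda]}M_{(\sigma)}$, verify these are $H$-submodules, and use indecomposability to conclude that only one coset occurs. You simply spell out the weight-shift calculation $x\cdot M_{(\lambda)}\subseteq M_{(\chi\lambda)}$ that the paper leaves as ``one can easily check.''
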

\begin{proof}
Since $M$ is a nonzero weight module, $\Pi(M)$ is not empty and $M=\oplus_{\l\in\Pi(M)}M_{(\l)}$.
Let $\pi: \widehat{G}\ra \widehat{G}/\langle\chi\rangle$ be the canonical
group epimorphism. For any $\l\in\Pi(M)$, one can easily check that
$M_{[\l]}:=\oplus_{\s\in\Pi(M), [\s]=[\l]}M_{(\s)}$ is a nonzero submodule of $M$.
Moreover, $M=\oplus_{[\l]\in\pi(\Pi(M))}M_{[\l]}$. This forces that $\pi(\Pi(M))$ is a single-point subset of
$\widehat{G}/\langle\chi\rangle$ since $M$ is indecomposable, and the lemma follows.
\end{proof}

\begin{lemma}\lelabel{4.6}
Let $\l\in \widehat{G}$. Then $M(\l)$ is a projective object in $\mathcal W$.
\end{lemma}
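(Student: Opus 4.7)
The plan is to verify the lifting property directly, using \leref{3.5} as the essential tool. Given a surjection $\pi\colon M\twoheadrightarrow N$ in $\mathcal W$ and any morphism $f\colon M(\l)\to N$, I want to produce $\tilde f\colon M(\l)\to M$ with $\pi\circ\tilde f=f$. Since $M(\l)$ is generated over $H$ by the single weight vector $v_{\l}$ of weight $\l$, and by \leref{3.5} an $H$-module map out of $M(\l)$ is determined by (and can be built from) the image of $v_{\l}$, the whole problem reduces to lifting the element $f(v_{\l})\in N_{(\l)}$ to a weight vector of weight $\l$ in $M$.

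First I would prove the following weight-space lifting: for every $\l\in\widehat{G}$, the restriction $\pi|_{M_{(\l)}}\colon M_{(\l)}\to N_{(\l)}$ is surjective. Indeed, any $n\in N_{(\l)}$ has some preimage $m\in M$; decomposing $m=\sum_{\mu\in\Pi(M)}m_{\mu}$ with $m_{\mu}\in M_{(\mu)}$ and applying $\pi$, one gets $n=\sum_{\mu}\pi(m_{\mu})$ with $\pi(m_{\mu})\in N_{(\mu)}$, so by the uniqueness of the weight decomposition in $N$ only the component $\mu=\l$ survives, giving $\pi(m_{\l})=n$ with $m_{\l}\in M_{(\l)}$.

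With this in hand, I set $v=f(v_{\l})\in N_{(\l)}$, pick $u\in M_{(\l)}$ with $\pi(u)=v$, and apply \leref{3.5} to the weight vector $u\in M$ to obtain an $H$-module map $\tilde f\colon M(\l)\to M$ with $\tilde f(v_{\l})=u$. Then $\pi\circ\tilde f$ and $f$ are two $H$-module maps from $M(\l)$ to $N$ sending $v_{\l}$ to the same element $v$, and since $M(\l)=H\cdot v_{\l}$ they coincide everywhere. This establishes projectivity of $M(\l)$ in $\mathcal W$.

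The only nontrivial step is the weight-space surjectivity in the first paragraph above; it is crucial that we work inside $\mathcal W$, where every module admits a canonical decomposition into weight spaces. I emphasize this because $M(\l)$ need not be projective in the larger category $\mathcal M$ (already $V_{\l}$ need not be projective over $H_0=kG$ when $kG$ is not semisimple), so the result is a statement specific to the weight-module category.
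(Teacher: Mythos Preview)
Your proof is correct and follows essentially the same approach as the paper's own proof: both lift $f(v_{\l})$ to a weight vector in $M_{(\l)}$ using the surjectivity of the epimorphism on weight spaces, then invoke \leref{3.5} to build the lifting map and verify it agrees with $f$ on the generator $v_{\l}$. Your argument is in fact slightly more detailed than the paper's, since you spell out why $\pi|_{M_{(\l)}}$ is surjective (via the weight decomposition of a preimage), whereas the paper simply asserts $f(M_{(\tau)})=L_{(\tau)}$.
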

\begin{proof}
Let $f: M\ra L$ be an epimorphism and $g: M(\l)\ra L$ be a morphism in $\mathcal W$.
Then $f(M_{(\tau)})=L_{(\tau)}$ and $g(M(\l)_{(\tau)})\ss L_{(\tau)}$ for any $\tau \in \widehat{G}$.
Hence there exists a weight vector $m\in M_{(\l)}$
such that $f(m)=g(v_{\l})$. By \leref{3.5}, there is an $H$-module map $\phi: M(\l)\ra M$
such that $\phi(v_{\l})=m$. Then for any $h\in H$, $(f\phi)(h\cdot v_{\l})=h\cdot(f\phi)(v_{\l})
=h\cdot f(m)=h\cdot g(v_{\l})=g(h\cdot v_{\l})$. Since $M(\l)=H\cdot v_{\l}$, $f\phi=g$.
Hence $M(\l)$ is a projective object in $\mathcal W$.
\end{proof}

Let $M\in\mathcal W$ be finite dimensional.
If $V$ is a composition factor of $M$, then it follows from \thref{3.9}
that ${\rm dim} V=1$, or ${\rm dim} V=ns$ with $|\chi|=s<\infty$ and $n\>1$.

From now on, unless otherwise stated, all $H$-modules considered are finite dimensional.

\begin{lemma}\lelabel{4.7}
Let $M\in\mathcal W$. If each composition factor of $M$ is one dimensional,
then $x\cdot M={\rm rad}(M)$.
\end{lemma}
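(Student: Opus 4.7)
The plan is to prove both inclusions $x\cdot M \subseteq {\rm rad}(M)$ and ${\rm rad}(M) \subseteq x\cdot M$ separately. First I will verify that $x\cdot M$ is an $H$-submodule of $M$: this follows from the commutation relation $gx = \chi(g)xg$ in $H$ (derived from $xg = \chi^{-1}(g)gx$), which gives $g\cdot(x\cdot m) = \chi(g)\, x\cdot(g\cdot m) \in x\cdot M$ for $g\in G$, while closure under left multiplication by $x$ is automatic.

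For the inclusion $x\cdot M \subseteq {\rm rad}(M)$, let $N$ be any maximal submodule of $M$. Then $M/N$ is simple and, being a composition factor of $M$, is one dimensional by hypothesis. Hence $M/N \cong V_\tau$ for some $\tau\in\widehat{G}$, and since $x$ acts as $0$ on $V_\tau$ by definition, we conclude $x\cdot M \subseteq N$. Taking the intersection over all maximal submodules yields $x\cdot M \subseteq {\rm rad}(M)$.

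For the reverse inclusion, I will show that the quotient $M/(x\cdot M)$ is semisimple, which forces ${\rm rad}(M)\subseteq x\cdot M$. Since $M$ is a weight module, so is the quotient $\ol{M} := M/(x\cdot M)$, and by construction $x$ acts as zero on $\ol{M}$. Hence for every nonzero weight vector $v\in \ol{M}_{(\tau)}$, the one-dimensional subspace $kv$ is stable under all of $G$ (with $G$ acting via $\tau$) and killed by $x$, so it is an $H$-submodule isomorphic to $V_\tau$. Decomposing each weight space $\ol{M}_{(\tau)}$ into lines therefore exhibits $\ol{M}$ as a direct sum of simple modules of the form $V_\tau$, and in particular ${\rm rad}(\ol{M})=0$. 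Combining the two inclusions gives $x\cdot M = {\rm rad}(M)$.

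This argument is almost entirely routine; the only subtle point is recognizing that the hypothesis on composition factors is used precisely once, to ensure every maximal submodule of $M$ has one-dimensional simple quotient (which is what lets $x$ descend to zero on $M/N$). No significant obstacle is expected.
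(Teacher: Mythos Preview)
Your proof is correct and follows essentially the same approach as the paper's: both show $x\cdot M\subseteq{\rm rad}(M)$ by noting that $x$ kills every one-dimensional simple quotient (the paper phrases this via $M/{\rm rad}(M)$ rather than individual maximal submodules, but it is the same observation), and both obtain the reverse inclusion by observing that $M/(x\cdot M)$ is a weight module annihilated by $x$ and hence a direct sum of $V_\tau$'s.
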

\begin{proof}
Since $x\cdot V_{\l}=0$ for any $\l\in\widehat{G}$, it follows from \thref{3.9} that $x\cdot(M/{\rm rad}(M))=0$.
Hence $x\cdot M\subseteq{\rm rad}(M)$. On the other hand, it is easy to see that
$x\cdot M$ is a submodule of $M$. Let $\overline{M}:=M/(x\cdot M)\in\mathcal{W}$. Since $x\cdot\overline{M}=0$,
$\overline{M}$ is isomorphic to a direct sum of some $V_{\l}$, $\l\in\widehat{G}$. Hence
$\overline{M}$ is semisimple, which implies that ${\rm rad}(M)\subseteq x\cdot M$.
Therefore, $x\cdot M={\rm rad}(M)$.
\end{proof}

\begin{theorem}\thlabel{4.8}
Let $M\in{\mathcal W}$ be indecomposable. Assume that
each composition factor of $M$ is one dimensional.
Then $M$ is isomorphic to some $V_t(\l)$,
where $t\>1$ and $\l\in\widehat{G}$.
\end{theorem}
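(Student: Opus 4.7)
The plan is to derive the conclusion from Lemma 4.1 applied with $\phi\colon M\to M$ the linear endomorphism $\phi(m)=x\cdot m$. The first step is to verify the three hypotheses required by Lemma 4.1. For $\phi^{-1}(N)$ being a submodule whenever $N$ is: if $x\cdot m\in N$, then $x\cdot(g\cdot m)=\chi^{-1}(g)g\cdot(x\cdot m)\in N$ using the commutation $xg=\chi^{-1}(g)gx$, and $x\cdot(x\cdot m)\in x\cdot N\subseteq N$. For $\phi(N)={\rm rad}(N)$: this is exactly Lemma 4.7 applied to the submodule $N$, which inherits the hypothesis that all its composition factors are one dimensional. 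For $\ker(\phi)={\rm soc}(M)$: the kernel is itself a submodule (the above argument with $N=0$) and hence a weight module; any nonzero weight vector in $\ker(\phi)$ spans a one dimensional submodule isomorphic to some $V_{\tau}$, so $\ker(\phi)$ is semisimple, giving $\ker(\phi)\subseteq{\rm soc}(M)$. Conversely, since every simple submodule of $M$ is a composition factor and hence one dimensional of the form $V_{\tau}$ on which $x$ acts as $0$, we get ${\rm soc}(M)\subseteq\ker(\phi)$.

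With the hypotheses verified, Lemma 4.1(c) gives that $M$ is uniserial with ${\rm l}(M/{\rm rad}(M))={\rm l}({\rm rad}^{t-1}(M))=1$, where $t={\rm rl}(M)$. Combined with Lemma 4.1(a), each factor ${\rm rad}^i(M)/{\rm rad}^{i+1}(M)$ is one dimensional, so ${\rm l}(M)=t=\dim M$. Writing ${\rm rad}^i(M)/{\rm rad}^{i+1}(M)\cong V_{\lambda_i}$ and noting that $\phi$ carries ${\rm rad}^i(M)$ onto ${\rm rad}^{i+1}(M)$ while sending a weight vector of weight $\mu$ to a weight vector of weight $\chi\mu$ (again from $xg=\chi^{-1}(g)gx$), one gets $\lambda_{i+1}=\chi\lambda_i$, hence $\lambda_i=\chi^i\lambda$ where $\lambda:=\lambda_0$ is the weight of the top.

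The last step is to build the isomorphism. Choose any weight vector $v\in M_{(\lambda)}$ with $v\notin{\rm rad}(M)$, which exists because $M/{\rm rad}(M)\cong V_{\lambda}$. For $0\<i\<t-1$ the element $x^i\cdot v$ lies in ${\rm rad}^i(M)\setminus{\rm rad}^{i+1}(M)$ (since $\phi({\rm rad}^i(M))={\rm rad}^{i+1}(M)$ is a one dimensional quotient and $v$ maps to a generator of the top), while $x^t\cdot v\in{\rm rad}^t(M)=0$. Linear independence of $\{v,x\cdot v,\ldots,x^{t-1}\cdot v\}$ follows by a standard filtration argument: if $\sum\alpha_i x^i\cdot v=0$ were a nontrivial relation with smallest nonzero index $i_0$, reducing modulo ${\rm rad}^{i_0+1}(M)$ gives $\alpha_{i_0}x^{i_0}\cdot v\equiv 0$, contradicting $x^{i_0}\cdot v\notin{\rm rad}^{i_0+1}(M)$. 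Hence these $t$ vectors form a basis of $M$, and the map $m_i\mapsto x^i\cdot v$ is an $H$-module isomorphism $V_t(\lambda)\cong M$, since the action of $G$ on $x^i\cdot v$ is by $\chi^i\lambda$ and $x$ sends $x^i\cdot v$ to $x^{i+1}\cdot v$ (or $0$ at the top).

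The main obstacle is the clean verification of the three Lemma 4.1 hypotheses, especially the identification $\ker(\phi)={\rm soc}(M)$, which is where the one dimensionality assumption on composition factors is really used; everything after that is structural bookkeeping about the uniserial chain and the behaviour of $x$ on weights.
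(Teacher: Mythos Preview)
Your proof is correct and follows essentially the same strategy as the paper: verify the hypotheses of Lemma~4.1 for the map $\phi(m)=x\cdot m$ (using Lemma~4.7 for $\phi(N)={\rm rad}(N)$ and the one-dimensionality of composition factors for ${\rm Ker}(\phi)={\rm soc}(M)$), conclude that $M$ is uniserial of length $t={\rm rl}(M)$, and then identify $M$ with $V_t(\lambda)$.

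The only genuine difference is the final identification step. The paper invokes the projectivity of $M(\lambda)$ in $\mathcal{W}$ (Lemma~4.6): from the epimorphism $M(\lambda)\to M/{\rm rad}(M)\cong V_\lambda$ one lifts to an epimorphism $M(\lambda)\to M$, which factors through $V_t(\lambda)$ since $x^t\cdot M=0$, and then a length comparison gives the isomorphism. You instead construct an explicit basis $\{v,x\cdot v,\dots,x^{t-1}\cdot v\}$ and write down the isomorphism by hand. Your route is slightly more elementary (it avoids Lemma~4.6) and makes the weight structure along the radical filtration explicit; the paper's route is more conceptual and reusable (the same projectivity argument is recycled in Theorem~4.18 for the $V(\lambda,f^r)$ case). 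One small point: Lemma~4.1 is stated under the hypothesis ${\rm rl}(M)>1$, so strictly you should treat the case $t=1$ separately (it is trivial: $M$ is then simple, hence $M\cong V_\lambda=V_1(\lambda)$), as the paper does.
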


\begin{proof}
If rl$(M)=1$, then $M$ is semisimple. Hence $M$ is simple since $M$
is indecomposable, which implies that dim$(M)=1$ since
each composition factor of $M$ is one dimensional.
Then by \thref{3.9}(a) and \prref{4.2},
$M\cong V_{\l}\cong V_1(\l)$ for some $\l\in\widehat{G}$.

Now assume that rl$(M)=t>1$. Define a linear map $\phi: M\ra M$ by
$\phi(m)=x\cdot m$ for all $m\in M$. By $x\cdot V_{\l}=0$ for all $\l\in\widehat{G}$,
it follows from \thref{3.9}(a) that ${\rm Ker}(\phi)=\{m\in M|x\cdot m=0\}={\rm soc}(M)$.
Let $N$ be a submodule of $M$. Then $\phi(N)=x\cdot N={\rm rad}(N)$ by \leref{4.7}.
It is easy to check that $\phi^{-1}(N)$ is also a submodule of $M$.
Thus, it follows from \leref{4.1}(c) that $M$ is uniserial, and hence ${\rm l}(M)=t$.
Moreover, $M/{\rm rad}(M)\cong V_{\l}$ for some $\l\in\widehat{G}$.
By \prref{4.2}, there is an $H$-module epimorphism $p: M(\l)\ra M/{\rm rad}(M)$.
Let $\pi: M\ra M/{\rm rad}(M)$ be the canonical epimorphism.
By \leref{4.6}, there is an $H$-module map $f: M(\l)\ra M$ such that $\pi f=p$.
Since $p$ is surjective, $f(M(\l))+{\rm rad}(M)=M$, which implies $f(M(\l))=M$.
Hence $f$ is an epimorphism. By $x^t\cdot M=\phi^t(M)={\rm rad}^t(M)=0$,
one gets that $J_t(\l)=x^t\cdot M(\l)\subseteq{\rm Ker}(f)$. Therefore,
$f$ induces an $H$-module epimorphism $\ol{f}: V_t(\l)\ra M$.
By \prref{4.3}(b), we have ${\rm l}(V_t(\l))=t={\rm l}(M)$. Hence $\ol{f}$ is an isomorphism,
and so $M\cong V_t(\l)$. This completes the proof.
\end{proof}

\begin{corollary}\colabel{4.9}
Assume that $|\chi|=\infty$. Let $M\in\mathcal W$ be indecomposable.
Then $M$ is isomorphic to some $V_t(\l)$,
where $t\>1$ and $\l\in\widehat{G}$. Moreover, $\{V_t(\l)|t\>1, \l\in\widehat{G}\}$
is a representative set of finite dimensional indecomposable objects in $\mathcal W$.
\end{corollary}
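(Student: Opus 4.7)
The plan is to reduce the statement to \thref{4.8} by using the hypothesis $|\chi|=\infty$ to force every composition factor to be one dimensional. First I would observe that for any finite dimensional $M\in\mathcal W$, the composition factors of $M$ are again weight modules (since weight modules are closed under submodules and quotients), and in particular they are simple objects in $\mathcal W$. By \coref{3.11}(a), when $|\chi|=\infty$ the only simple objects of $\mathcal W$, up to isomorphism, are the one dimensional modules $V_{\l}$ with $\l\in\widehat{G}$. Hence every composition factor of $M$ is one dimensional.

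With this hypothesis in hand, the main classification follows immediately: since $M$ is indecomposable and each of its composition factors is one dimensional, \thref{4.8} applies and yields $M\cong V_t(\l)$ for some $t\>1$ and some $\l\in\widehat{G}$.

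For the ``moreover'' assertion I would verify the two things that make $\{V_t(\l)\mid t\>1,\ \l\in\widehat{G}\}$ a representative set. Existence of an isomorphism class representative for each finite dimensional indecomposable is exactly what was just proved; indecomposability of each $V_t(\l)$ is \prref{4.3}(b); and the fact that distinct pairs $(t,\l)$ give non-isomorphic modules is exactly \prref{4.4}. Putting these together finishes the proof.

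The statement is really a packaging corollary: the entire technical work lives in the uniseriality argument of \leref{4.1} used in \thref{4.8}, so there is no genuine new obstacle here. The only ``step to be careful about'' is making sure the reduction of composition factors to one dimensional ones is airtight, which is why I would explicitly invoke closure of $\mathcal W$ under subquotients before quoting \coref{3.11}(a).
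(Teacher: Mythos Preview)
Your proposal is correct and follows essentially the same approach as the paper: the paper invokes \thref{3.9} (while you cite its consequence \coref{3.11}(a)) to conclude that all simple weight modules are one dimensional when $|\chi|=\infty$, then applies \thref{4.8} for the first assertion and Propositions~4.3 and~4.4 for the second. Your added remark about closure of $\mathcal W$ under subquotients is a harmless extra justification that the paper leaves implicit.
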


\begin{proof}
By \thref{3.9}, every simple weight $H$-module is one dimensional when $|\chi|=\infty$.
Thus, the first statement follows from \thref{4.8}, and the second one
follows from Propositions 4.3 and 4.4.
\end{proof}

Assume that $|\chi|=s<\oo$. Then $M(\l)=\op_{0\<i\<s-1}M(\l)_{(\chi^i\l)}$ and
$$M(\l)_{(\chi ^i \l)}={\rm span}\{x^{ns+i}\cdot v_{\l}\mid n\>0\},\quad 0\<i\<s-1.$$
Let $M$ be an arbitrary $H$-module.
For any monic polynomial $f(y)\in k[y]$, put
$$M^{(f)}=\{m\in M|f(x^s)^r\cdot m=0 \mbox{ for some integer }r>0\}.$$
Then one can easily prove the following lemma.

\begin{lemma}\lelabel{4.10}
Assume that $|\chi|=s<\oo$. Let $M$ be an arbitrary $H$-module, and $f(y)\in k[y]$
a monic polynomial. Then $M^{(f)}$ is a submodule of $M$.
\end{lemma}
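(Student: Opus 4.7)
The plan is to reduce the lemma to the single observation that $x^s$ lies in the center of $H$, after which the submodule verification is formal. First I would record the centrality: since $|\chi|=s<\oo$, the character $\chi^s$ is trivial, so the commutation relation $xg=\chi^{-1}(g)gx$ (valid for every $g\in G$) iterates to $x^sg=\chi^{-s}(g)gx^s=gx^s$. Because $x^s$ also trivially commutes with $x$, and $H$ is generated as an algebra by $G$ together with $x$, this gives $x^s\in Z(H)$, and hence $f(x^s)^r\in Z(H)$ for every integer $r\>0$ and every $f(y)\in k[y]$.

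Granted this, I would verify the three submodule conditions in turn. For closure under addition, given $m_1,m_2\in M^{(f)}$ with $f(x^s)^{r_i}\cdot m_i=0$, the integer $r=\max\{r_1,r_2\}$ satisfies $f(x^s)^r\cdot(m_1+m_2)=0$, so $m_1+m_2\in M^{(f)}$. Closure under scalar multiplication is immediate from $k$-linearity of the action. Finally, for closure under the $H$-action, given $h\in H$ and $m\in M^{(f)}$ with $f(x^s)^r\cdot m=0$, centrality of $f(x^s)^r$ yields
$$f(x^s)^r\cdot(h\cdot m)=(f(x^s)^rh)\cdot m=(hf(x^s)^r)\cdot m=h\cdot(f(x^s)^r\cdot m)=0,$$
so $h\cdot m\in M^{(f)}$.

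There is really no obstacle here; the entire content of the lemma is the centrality of $x^s$, and this is a direct consequence of the finiteness assumption $|\chi|=s<\oo$. The only small point to be careful about is passing from the single generator commutation $x^sg=gx^s$ to centrality in all of $H$, which is justified because $G\cup\{x\}$ generates $H$ as an algebra.
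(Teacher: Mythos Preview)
Your argument is correct and is exactly the approach the paper has in mind: the paper does not spell out a proof here (it simply says ``one can easily prove the following lemma''), but the centrality of $f(x^s)$ in $H$ is precisely the reason, and the paper invokes this centrality explicitly elsewhere (e.g., before \prref{3.6} and in the proofs of \leref{4.13} and \thref{4.18}). Your derivation of $x^s\in Z(H)$ from $|\chi|=s$ and the generating set $G\cup\{x\}$, followed by the routine submodule check, is the intended argument.
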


\begin{theorem}\thlabel{4.11}
Assume that $|\chi|=s<\oo$. Let $M$ be an $H$-module.
Then there are distinct monic irreducible polynomials $f_1(y), f_2(y), \cdots, f_t(y)$ in $k[y]$
such that $M=M^{(f_1)}\oplus M^{(f_2)}\oplus\cdots\oplus M^{(f_t)}$.
\end{theorem}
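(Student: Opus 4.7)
The plan is to reduce the statement to the classical primary decomposition of a single linear operator on a finite dimensional vector space, using the fact that $x^s$ is a central element of $H$. Since the excerpt fixes $M$ to be finite dimensional, the minimal polynomial of the $k$-linear operator ``multiplication by $x^s$'' exists, and its irreducible factors will furnish the desired polynomials $f_1,\ldots,f_t$.

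First I would verify the centrality observation: because $xg=\chi^{-1}(g)gx$ in $H$ and $|\chi|=s$, we have
\[
x^s g \;=\; \chi^{-s}(g)\,gx^s \;=\; gx^s
\]
for every $g\in G$, while $x^s$ trivially commutes with $x$. Since $H$ is generated as an algebra by $G$ and $x$, the subalgebra $k[x^s]$ lies in the center of $H$. In particular $f(x^s)$ is central for every $f(y)\in k[y]$, which is the algebraic reason behind \leref{4.10}.

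Next, let $\phi\in\End_k(M)$ be left multiplication by $x^s$. Since $\dim_k M<\infty$, $\phi$ has a minimal polynomial $m(y)\in k[y]$; factor it as $m(y)=f_1(y)^{r_1}\cdots f_t(y)^{r_t}$ with pairwise distinct monic irreducibles $f_i(y)$ and $r_i\>1$. The classical primary decomposition theorem (equivalently, the Chinese Remainder Theorem applied to $k[y]/(m(y))$, which is $M$ as a $k[y]$-module via $\phi$) gives a $k[x^s]$-module decomposition
\[
M \;=\; \bigoplus_{i=1}^{t}\ker\bigl(f_i(\phi)^{r_i}\bigr).
\]
I would then identify each summand with $M^{(f_i)}$: the inclusion $\ker(f_i(\phi)^{r_i})\subseteq M^{(f_i)}$ is immediate from the definition, and for the reverse direction, if $m\in M^{(f_i)}$ with $f_i(x^s)^N\cdot m=0$, decompose $m=\sum_j m_j$ with $m_j\in\ker(f_j(\phi)^{r_j})$; for $j\neq i$ the polynomials $f_i^N$ and $f_j^{r_j}$ are coprime in $k[y]$, so $f_i(\phi)^N$ acts as an automorphism of $\ker(f_j(\phi)^{r_j})$, forcing $m_j=0$ and hence $m\in\ker(f_i(\phi)^{r_i})$.

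Finally, by \leref{4.10} each $M^{(f_i)}$ is an $H$-submodule of $M$, so the vector space decomposition above is actually a decomposition of $H$-modules, which is what we want. I do not anticipate a real obstacle here: the only non-formal ingredient is the centrality of $x^s$ (which has already been implicitly used to state \leref{4.10}), and everything else is standard linear algebra over the PID $k[y]$.
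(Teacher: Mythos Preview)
Your proposal is correct and follows essentially the same approach as the paper: both take the minimal polynomial of the operator $\phi(m)=x^s\cdot m$, factor it into distinct irreducibles $f_1^{r_1}\cdots f_t^{r_t}$, and obtain the decomposition via coprimality/B\'ezout arguments. The only cosmetic difference is that you invoke the classical primary decomposition theorem and then identify $\ker(f_i(\phi)^{r_i})$ with $M^{(f_i)}$, whereas the paper writes out the B\'ezout argument for $M=\sum M^{(f_i)}$ and the directness of the sum by hand; your explicit verification that $x^s$ is central is a useful addition that the paper leaves implicit.
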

\begin{proof}
Let $\phi: M\ra M$ be the linear endomorphism defined by $\phi(m)=x^s\cdot m$.
Since $M$ is finite dimensional, $\phi$ has a minimal polynomial $f(y)\in k[y]$.
That is, $f(y)$ is a nonzero monic polynomial with $f(\phi)=0$ such that deg$(f(y))$ is minimal
with respect to $f(\phi)=0$. Then $f(x^s)\cdot M=0$.
Now $f(y)$ has a standard decomposition
$f(y)=f_1(y)^{r_1}f_2(y)^{r_2}\cdots f_t(y)^{r_t}$, where $f_1(y), f_2(y), \cdots, f_t(y)$
are distinct monic irreducible polynomials and $r_i\>1$ for all $1\<i\<t$.
If $t=1$, then $M=M^{(f_1)}$, done. Now assume that $t>1$.
Then the polynomials $f_1(y), f_2(y), \cdots, f_t(y)$
are pairwise coprime. For any $1\<i\<t$, let
$$F_i(y):=\prod_{1\<j\<t, j\neq i}f_j(y)^{r_j}=f_1(y)^{r_1}\cdots f_{i-1}(y)^{r_{i-1}}
f_{i+1}(y)^{r_{i+1}}\cdots f_t(y)^{r_t}.$$
Then the polynomials $F_1(y), F_2(y), \cdots, F_t(y)$ are coprime. Hence there exist
polynomials $u_1(y), u_2(y), \cdots, u_t(y)$ in $k[y]$ such that
$$u_1(y)F_1(y)+u_2(y)F_2(y)+\cdots +u_t(y)F_t(y)=1.$$
Let $m\in M$. Then
$$m=1\cdot m=u_1(x^s)F_1(x^s)\cdot m+u_2(x^s)F_2(x^s)\cdot m+\cdots +u_t(x^s)F_t(x^s)\cdot m.$$
For any $1\<i\<t$, $f_i(x^s)^{r_i}\cdot(u_i(x^s)F_i(x^s)\cdot m)=u_i(x^s)\cdot(f(x^s)\cdot m)=0$,
and hence $u_i(x^s)F_i(x^s)\cdot m\in M^{(f_i)}$. This shows that
$M=M^{(f_1)}+M^{(f_2)}+\cdots+M^{(f_t)}$. Now assume
$m_1+m_2+\cdots+m_t=0$, where $m_i\in M^{(f_i)}$, $1\<i\<t$. Then one can choose an integer
$n\>1$ such that $f_i(x^s)^n\cdot m_i=0$ for all $1\<i\<t$. Hence $f_i(x^s)^{r_in}\cdot m_i=0$
by $r_i\>1$, $1\<i\<t$. For any $1\<i\neq j\<t$, one can see that
$F_j(y)^n$ is divisible by $f_i(y)^{r_in}$, which implies that $F_j(x^s)^n\cdot m_i=0$.
Hence $0=F_j(x^s)^n\cdot(m_1+m_2+\cdots+m_t)=F_j(x^s)^n\cdot m_j$, $1\<j\<t$.
On the other hand, one can see that $f_j(y)$ and $F_j(y)$ are coprime, and so
$f_j(y)^n$ and $F_j(y)^n$ are coprime, $1\<j\<t$. Therefore, there are polynomials $v_j(y), w_j(y)\in k[y]$
such that $v_j(y)f_j(y)^n+w_j(y)F_j(y)^n=1$. Thus, $m_j=(v_j(x^s)f_j(x^s)^n+w_j(x^s)F_j(x^s)^n)\cdot m_j
=(v_j(x^s)\cdot(f_j(x^s)^n\cdot m_j)+w_j(x^s)\cdot(F_j(x^s)^n\cdot m_j)=0$, $1\<j\<t$.
This completes the proof.
\end{proof}

\begin{corollary}\colabel{4.12}
Assume that $|\chi|=s<\oo$. Let $M$ be an indecomposable $H$-module.
Then there is a monic irreducible polynomial $f(y)$ in $k[y]$ such that $M=M^{(f)}$.
\end{corollary}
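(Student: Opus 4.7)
The plan is to deduce this corollary directly from \thref{4.11}. By \thref{4.11}, since $M$ is a (finite dimensional) $H$-module, there exist distinct monic irreducible polynomials $f_1(y), f_2(y), \ldots, f_t(y) \in k[y]$ such that
$$M = M^{(f_1)} \oplus M^{(f_2)} \oplus \cdots \oplus M^{(f_t)}.$$

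By \leref{4.10}, each $M^{(f_i)}$ is a submodule of $M$, so this is a direct sum decomposition in the category of $H$-modules. Since $M$ is indecomposable, at most one of the summands can be nonzero. Discarding zero summands (and relabeling), we obtain $M = M^{(f)}$ for a single monic irreducible $f(y) \in k[y]$.

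The only subtle point to verify is that discarding zero summands is legitimate: if all $M^{(f_i)} = 0$ then $M = 0$, contradicting indecomposability (which requires $M \neq 0$). So at least one summand is nonzero, and indecomposability forces all others to be zero, yielding the desired conclusion. There is no real obstacle here; the corollary is essentially an immediate bookkeeping consequence of the primary decomposition established in \thref{4.11}.
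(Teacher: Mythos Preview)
Your argument is correct and follows exactly the paper's approach: the paper's proof simply reads ``It follows from \leref{4.10} and \thref{4.11},'' and you have merely spelled out the routine details of why an internal direct sum of submodules forces all but one summand to vanish when $M$ is indecomposable.
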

\begin{proof}
It follows from \leref{4.10} and \thref{4.11}.
\end{proof}

\begin{lemma}\lelabel{4.13}
Assume that $|\chi|=s<\oo$. Let $M$ be a simple $H$-module.
Then there exists a monic irreducible polynomial $f(y)$ in $k[y]$ such that
$f(x^s)\cdot M=0$.
\end{lemma}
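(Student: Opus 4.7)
My plan is to realise $x^s$ as a central element of $H$ and then invoke Schur's lemma. First I would verify centrality: the defining relation $xg=\chi^{-1}(g)gx$ for $g\in G$ iterates to $x^sg=\chi^{-s}(g)gx^s$, and since $|\chi|=s$ the scalar $\chi^{-s}(g)$ equals $1$ for every $g\in G$. Together with the obvious identity $x^sx=xx^s$, this shows $x^s$ lies in the centre of $H$.

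Because $x^s$ is central, left multiplication by $x^s$ gives an $H$-module endomorphism $\phi\colon M\to M$, $m\mapsto x^s\cdot m$. By the standing finite-dimensionality convention of this section, $M$ is finite dimensional over $k$, so $\phi$ admits a nonzero monic minimal polynomial $f(y)\in k[y]$, and by construction $f(x^s)\cdot M=f(\phi)(M)=0$. Thus the only remaining task is to check that this $f(y)$ is irreducible.

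For irreducibility I would apply Schur's lemma: since $M$ is simple, ${\rm End}_H(M)$ is a division ring, so its commutative subalgebra $k[\phi]\subseteq{\rm End}_H(M)$ has no zero divisors. The evaluation map $y\mapsto\phi$ gives an isomorphism $k[y]/(f(y))\cong k[\phi]$, so $(f(y))$ must be a prime ideal of $k[y]$; hence $f(y)$ is irreducible, as required.

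The main obstacle—such as it is—is really just pinning down the centrality of $x^s$, which is precisely the feature that the exponent $s=|\chi|$ is designed to deliver (and is exactly what fails in the $|\chi|=\infty$ regime, explaining why that case is treated separately in Corollary~\ref{co:4.9}). Once centrality is in hand, the rest of the argument is a textbook application of Schur's lemma and uses no further structural information about $H$.
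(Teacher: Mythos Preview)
Your argument is correct. Both proofs rest on the same underlying fact that $x^s$ is central in $H$, but they proceed differently from there. The paper first invokes \coref{4.12} (hence \thref{4.11}, the primary decomposition of $M$ with respect to the minimal polynomial of the action of $x^s$) to obtain an irreducible $f$ with $M=M^{(f)}$, and then uses simplicity to show that the kernel $\{m\in M\mid f(x^s)\cdot m=0\}$, which is a submodule by centrality, must be all of $M$. You instead take the minimal polynomial $f$ of $\phi=x^s\cdot(-)$ directly and deduce irreducibility from Schur's lemma via the embedding $k[y]/(f)\cong k[\phi]\hookrightarrow{\rm End}_H(M)$. Your route is self-contained and slightly more elementary, bypassing the machinery of \thref{4.11} and \coref{4.12}; the paper's route has the advantage of reusing the decomposition framework it has just set up, so that the lemma appears as an immediate consequence of that framework.
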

\begin{proof}
By \coref{4.12}, there is a monic irreducible polynomial $f(y)$ in $k[y]$ such that $M=M^{(f)}$.
Let $N=\{m\in M|f(x^s)\cdot m=0\}$. Then $N$ is a submodule of $M$ since $f(x^s)$
is central in $H$. By $M=M^{(f)}$, one knows that $N\neq 0$. Since $M$ is simple,
$N=M$, and so $f(x^s)\cdot M=0$.
\end{proof}

\begin{lemma}\lelabel{4.14}
Assume that $|\chi|=s<\oo$. Let $M\in\mathcal W$ be indecomposable.
If there is an $f(y)\in{\rm Irr}[y]$ such that
$f(x^s)\cdot M=0$, then $M$ is simple and isomorphic to $V(\l, f)$ for some $\l\in\widehat{G}$.
\end{lemma}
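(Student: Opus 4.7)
The plan is to show that every cyclic submodule of $M$ generated by a weight vector is simple, which forces $M$ to be semisimple; then indecomposability will immediately pin $M$ down as a single copy of some $V(\lambda, f)$.

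For the key step, I would pick any weight vector $v \in M_{(\mu)}\setminus\{0\}$; such a $v$ exists because $M$ is a nonzero weight module. By \leref{3.5} there is an $H$-module epimorphism $\phi\colon M(\mu)\to H\cdot v$ with $\phi(v_\mu) = v$. Because $|\chi| = s$, we have $x^s g = \chi^{-s}(g)\,gx^s = gx^s$ for every $g\in G$, so $x^s$ and hence $f(x^s)$ is central in $H$. Together with $f(x^s)\cdot v = 0$, this gives $N_f(\mu) = f(x^s)\cdot M(\mu) \subseteq \ker\phi$, so $\phi$ factors through an $H$-module epimorphism $V(\mu, f) = M(\mu)/N_f(\mu) \to H\cdot v$. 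Since $f\in{\rm Irr}[y]$, \prref{3.6}(b) says $V(\mu, f)$ is simple, so this epimorphism is an isomorphism and $H\cdot v$ is a simple submodule of $M$.

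Since $M$ is a weight module, it is spanned as a $k$-vector space by its weight vectors; by the previous paragraph, each such vector generates a simple submodule. Therefore $M$ is a sum of simple submodules, that is, $M$ is semisimple. Combined with the indecomposability hypothesis, this forces $M$ to be simple. Taking any weight vector $v\in M$ of some weight $\lambda$, the argument above then yields $M = H\cdot v \cong V(\lambda, f)$, as desired.

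The only mildly delicate point is verifying the centrality of $f(x^s)$ so that $N_f(\mu)\subseteq\ker\phi$; this is automatic from $xg = \chi^{-1}(g)gx$ together with $\chi^s = 1$. Everything else is a direct appeal to \leref{3.5}, \prref{3.6}(b), and the elementary observation that an indecomposable semisimple module is simple.
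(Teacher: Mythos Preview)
Your proof is correct and follows essentially the same approach as the paper: use \leref{3.5} to factor the cyclic submodule generated by each weight vector through $V(\mu,f)$, invoke \prref{3.6}(b) for simplicity, conclude $M$ is semisimple, and then use indecomposability. The paper additionally first checks that $M_0=\{m:x\cdot m=0\}=0$ and determines $\Pi(M)=\{\chi^i\lambda\mid 0\le i\le s-1\}$ (via \leref{4.5}) so as to identify all the resulting simples with a fixed $V(\lambda,f)$ via \coref{3.8} before concluding, but your route---deducing simplicity first and only then fixing $\lambda$---is slightly more economical and equally valid.
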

\begin{proof}
Suppose that there is an $f(y)\in{\rm Irr}[y]$ such that
$f(x^s)\cdot M=0$. Let $f(y)=y^n-\sum_{i=0}^{n-1}\a_iy^i$. Then $\a_0\neq 0$. Let
$M_0=\{m\in M | x\cdot m=0\}$. Then an argument similar to the proof of \prref{3.6}(b)
shows that $M_0=0$.

Since $M$ is an indecomposable weight module, $\Pi(M)$ is not empty. Let $\l\in\Pi(M)$
and $0\neq v\in M_{(\l)}$.
Since $M_0=0$, we have $x^i\cdot v\neq 0$ for all $i\>1$. Hence $\chi^i\l\in\Pi(M)$
since $x^i\cdot v$ is a weight vector of weight $\chi^i\l$, $i\>1$.
Then it follows from \leref{4.5} and $|\chi|=s<\oo$ that $\Pi(M)=\{\chi^i\l | 0\<i\<s-1\}$.
For any $0\<i\<s-1$ and $0\neq m\in M_{(\chi^i\l)}$,
by \leref{3.5}, there is an $H$-module epimorphism $\phi: M(\chi^i\l)\ra H\cdot m$
such that $\phi(v_{\chi^i\l})=m$. Obviously, $f(x^s)\cdot v_{\chi^i\l}\in{\rm Ker}(\phi)$,
and so $N_f(\chi^i\l)\subseteq{\rm Ker}(\phi)$. Therefore,
$\phi$ induces an $H$-module epimorphism $\ol{\phi}: V(\chi^i\l, f)\ra H\cdot m$.
By \prref{3.6}(b), $V(\chi^i\l, f)$ is simple.
Hence $\ol{\phi}$ is an isomorphism, and so $H\cdot m\cong V(\chi^i\l, f)\cong V(\l, f)$
by \coref{3.8}. Thus, we have proved
that any weight vector of $M$ is contained in a simple submodule
isomorphic to $V(\l, f)$. This shows that $M$ is semisimple and each simple submodule
of $M$ is isomorphic to $V(\l, f)$. Hence $M$ is simple and isomorphic to
$V(\l, f)$ since $M$ is indecomposable.
\end{proof}

\begin{lemma}\lelabel{4.15}
Assume that $|\chi|=s<\oo$ and $f(y)\in{\rm Irr}[y]$.
Let $M\in\mathcal W$ be indecomposable with $M=M^{(f)}$.
Then there exists a $\l\in\widehat{G}$ such that each composition factor of $M$ is isomorphic to $V(\l, f)$.
\end{lemma}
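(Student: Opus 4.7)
The strategy is to show that $f(x^s)$ acts as zero on every composition factor of $M$, then to use the classification of simple weight modules from \thref{3.9} to identify the composition factors up to isomorphism. Since $M$ is finite dimensional and $M=M^{(f)}$, one may pick the smallest integer $r\>1$ with $f(x^s)^r\cdot M=0$. For any composition factor $V$ of $M$, $V$ is a simple weight module annihilated by $f(x^s)^r$. The key observation is that $x^s$ is central in $H$: since $|\chi|=s$ forces $\chi^s=1$, we have $x^sg=\chi^{-s}(g)gx^s=gx^s$ for every $g\in G$, so $f(x^s)$ is central and the subspace $W=\{v\in V\mid f(x^s)\cdot v=0\}$ is a submodule of $V$. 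Choosing the smallest $k\>1$ with $f(x^s)^k\cdot V=0$, the nonzero set $f(x^s)^{k-1}\cdot V$ lies in $W$, so $W=V$ by simplicity, i.e.\ $f(x^s)\cdot V=0$.

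Next, I will apply \thref{3.9}: $V$ is either $V_\mu$ or $V(\mu,g)$ for some $\mu\in\widehat{G}$ and some $g\in{\rm Irr}[y]$. The one-dimensional case is ruled out because on $V_\mu$ the element $x$ (and hence $x^s$) acts as zero, so $f(x^s)$ acts as the scalar $f(0)$; but $f\in{\rm Irr}[y]$ means $f$ is monic irreducible with $f\neq y$, forcing $f(0)\neq 0$ and contradicting $f(x^s)\cdot V_\mu=0$. In the remaining case $V\cong V(\mu,g)$, \prref{3.6}(a) also gives $g(x^s)\cdot V=0$. If $f\neq g$, then being two distinct monic irreducibles $f$ and $g$ are coprime in $k[y]$, so $u(y)f(y)+v(y)g(y)=1$ for some $u,v\in k[y]$; evaluating at $x^s$ on $V$ yields ${\rm id}_V=0$, a contradiction. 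Hence $g=f$ and $V\cong V(\mu,f)$ for some $\mu\in\widehat{G}$.

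Finally, since $M$ is indecomposable, \leref{4.5} provides a single coset $[\l]\in\widehat{G}/\langle\chi\rangle$ with $\Pi(M)\ss [\l]$. For a composition factor $V=N/N'$ with $N'\ss N\ss M$, both $N$ and $N'$ are weight submodules, so the weight decomposition of $N$ descends to $V$ and gives $\Pi(V)\ss\Pi(M)\ss[\l]$; combined with $V\cong V(\mu,f)$ this forces $[\mu]=[\l]$, and \coref{3.8} yields $V\cong V(\l,f)$, as desired. The main subtlety I foresee is exactly the opening step of promoting a nilpotent action of $f(x^s)$ on $M$ to outright annihilation on each simple subfactor; once centrality of $f(x^s)$ is exploited, the rest is a clean sequence of applications of \thref{3.9}, \prref{3.6}(a), \leref{4.5}, and \coref{3.8}.
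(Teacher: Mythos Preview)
Your proof is correct and follows essentially the same route as the paper: show $f(x^s)$ annihilates each composition factor (your centrality argument is exactly the content of \leref{4.13} and its proof), identify the factor as $V(\mu,f)$ (the paper cites \leref{4.14} here, while you argue directly via \thref{3.9} and a coprimality trick), and then use \leref{4.5} together with \coref{3.8} to pin the weight down to a single $\l$. The only stylistic difference is that you unpack the content of Lemmas~4.13 and~4.14 inline rather than citing them, which makes your argument slightly more self-contained but otherwise identical in substance.
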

\begin{proof}
Since $M$ is an indecomposable weight $H$-module,
$\Pi(M)$ is not empty. Let $\l\in\Pi(M)$. Since $|\chi|=s<\oo$,
$\Pi(M)\subseteq\{\chi^i\l|0\<i\<s-1\}$ by \leref{4.5}.
Let $N$ be a composition factor of $M$. Then $N=N^{(f)}$ by
$M=M^{(f)}$. By \leref{4.13} and its proof, one knows that $f(x^s)\cdot N=0$.
Then it follows from \leref{4.14} and its proof that $N\cong V(\s, f)$
for some $\s\in\Pi(N)$. Since $\Pi(N)\subseteq\Pi(M)\subseteq\{\chi^i\l|0\<i\<s-1\}$,
$N\cong V(\l, f)$ by \coref{3.8}.
\end{proof}

\begin{lemma}\lelabel{4.16}
Assume that $|\chi|=s<\oo$ and $f(y)\in{\rm Irr}[y]$.
Let $M\in\mathcal W$ satisfy $M=M^{(f)}$.
Then ${\rm rad}(M)=f(x^s)\cdot M$.
\end{lemma}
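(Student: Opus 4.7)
The plan is to prove the equality by two inclusions, exploiting the fact that $x^s$ lies in the center of $H$: since $|\chi|=s$, the relation $xg=\chi^{-1}(g)gx$ iterates to $x^sg=\chi^{-s}(g)gx^s=gx^s$ for every $g\in G$, and $x^s$ of course commutes with $x$. Hence $f(x^s)$ is central, so $f(x^s)\cdot M$ is indeed a submodule of $M$, and the quotient $M/(f(x^s)\cdot M)$ is again a weight module.

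For the inclusion $f(x^s)\cdot M\subseteq{\rm rad}(M)$, I would show that $f(x^s)$ annihilates every simple quotient of $M$. Decompose $M$ into indecomposable weight submodules $M=\oplus_i M_i$ via Krull--Schmidt; each $M_i$ inherits $M_i=M_i^{(f)}$ from $M=M^{(f)}$ (any power of $f(x^s)$ annihilating $M$ annihilates each summand). By \leref{4.15}, every composition factor of $M_i$, and hence of $M$, is isomorphic to some $V(\l,f)$. Now $M/{\rm rad}(M)$ is semisimple and each of its simple summands appears as a composition factor of $M$, so it is a direct sum of modules $V(\l,f)$. By \prref{3.6}(a), $f(x^s)$ acts as zero on each such $V(\l,f)$, hence on $M/{\rm rad}(M)$, giving $f(x^s)\cdot M\subseteq{\rm rad}(M)$.

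For the reverse inclusion ${\rm rad}(M)\subseteq f(x^s)\cdot M$, I would prove that $\overline{M}:=M/(f(x^s)\cdot M)$ is semisimple, since then $\overline{M}$ is a quotient corresponding to a submodule containing ${\rm rad}(M)$. Decompose $\overline{M}$ into indecomposable weight submodules via Krull--Schmidt. Each indecomposable summand $\overline{M}_j$ is annihilated by $f(x^s)$ (being a quotient of $M$ by a submodule containing $f(x^s)\cdot M$, the action of $f(x^s)$ on $\overline{M}$ is zero, hence on each summand). Since $f(y)\in{\rm Irr}[y]$, \leref{4.14} applies to $\overline{M}_j$ and yields that $\overline{M}_j$ is simple, isomorphic to some $V(\l_j,f)$. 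Therefore $\overline{M}$ is semisimple, which forces ${\rm rad}(M)\subseteq f(x^s)\cdot M$.

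Combining the two inclusions gives ${\rm rad}(M)=f(x^s)\cdot M$. I do not expect a serious obstacle here: both directions reduce, via Krull--Schmidt decompositions into indecomposable weight submodules, to previously established lemmas (\leref{4.14} and \leref{4.15}). The one point requiring care is to ensure that the hypothesis $M=M^{(f)}$ is inherited by each indecomposable summand in the first inclusion, and that the annihilation of $f(x^s)$ on $\overline{M}$ passes to indecomposable summands in the second; both follow immediately because $f(x^s)$ is central and the summands are $H$-submodules.
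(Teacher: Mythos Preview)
Your proof is correct and follows essentially the same two-inclusion strategy as the paper: show $f(x^s)$ kills $M/{\rm rad}(M)$ via \leref{4.15}, and show $M/(f(x^s)\cdot M)$ is semisimple via \leref{4.14}. The only difference is that you make the Krull--Schmidt reduction to indecomposable summands explicit (since \leref{4.14} and \leref{4.15} are stated for indecomposable $M$), whereas the paper simply invokes ``the proof of \leref{4.15}'' and applies \leref{4.14} directly, leaving that reduction implicit.
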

\begin{proof}
Since $M/{\rm rad}(M)$ is semisimple, it follows from the proof of \leref{4.15}
that $f(x^s)\cdot(M/{\rm rad}(M))=0$. Hence $f(x^s)\cdot M\subseteq {\rm rad}(M)$.
On the other hand, we have $f(x^s)\cdot(M/(f(x^s)\cdot M))=0$. Then by \leref{4.14},
one knows that $M/(f(x^s)\cdot M)$ is semisimple, which implies
${\rm rad}(M)\subseteq f(x^s)\cdot M$. Hence
${\rm rad}(M)=f(x^s)\cdot M$
\end{proof}

\begin{proposition}\prlabel{4.17}
Assume that $|\chi|=s<\oo$. Let $\l\in\widehat{G}$ and $f(y)\in{\rm Irr}[y]$.
Then for any positive integer $r$, $V(\l, f^r)$ is indecomposable with ${\rm rl}(V(\l, f^r))=r$.
\end{proposition}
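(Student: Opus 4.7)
The plan is to exploit that $f(x^s)$ acts centrally on $M:=V(\l,f^r)$ so that \leref{4.16} can be iterated all the way down the radical series. Since $|\chi|=s$, the relation $xg=\chi^{-1}(g)gx$ yields $x^sg=\chi^{-s}(g)gx^s=gx^s$ for every $g\in G$, so $x^s\in Z(H)$ and consequently $f(x^s)\in Z(H)$. Writing $M=M(\l)/f(x^s)^r\cdot M(\l)$, I see that $M\in\mathcal{W}$ as a quotient of $M(\l)$ and that $f(x^s)^r$ annihilates $M$, so $M=M^{(f)}$; the same equality holds for every submodule of $M$, making \leref{4.16} applicable throughout.

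Next I would prove ${\rm rad}^i(M)=f(x^s)^i\cdot M$ by induction on $i\>0$. The base case is trivial, and the inductive step applies \leref{4.16} to the submodule ${\rm rad}^i(M)=f(x^s)^i\cdot M$, giving
\begin{equation*}
{\rm rad}^{i+1}(M)={\rm rad}({\rm rad}^i(M))=f(x^s)\cdot{\rm rad}^i(M)=f(x^s)^{i+1}\cdot M.
\end{equation*}
To conclude ${\rm rl}(M)=r$, I must verify $f(x^s)^{r-1}\cdot M\neq 0$. This follows from the observation that $M(\l)=k[x]\cdot v_{\l}$ is a free $k[x]$-module of rank one, and since $k[x]$ is free over $k[x^s]$ with basis $1,x,\cdots,x^{s-1}$, multiplication by $f(x^s)$ is injective on $M(\l)$. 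Therefore $f(x^s)^{r-1}\cdot M(\l)\supsetneq f(x^s)^r\cdot M(\l)$, which forces $f(x^s)^{r-1}\cdot M\neq 0$.

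For indecomposability, the key point is that the top $M/{\rm rad}(M)$ is simple. Using the $i=1$ case above,
\begin{equation*}
M/{\rm rad}(M)=M/(f(x^s)\cdot M)\cong M(\l)/(f(x^s)\cdot M(\l))=V(\l,f),
\end{equation*}
which is simple by \prref{3.6}(b) (since $f(y)\in{\rm Irr}[y]$ in particular means $f(y)\neq y$). If a decomposition $M=A\op B$ existed with both summands nonzero, then each summand, being a nonzero finite dimensional module, would admit a simple quotient, so ${\rm rad}(A)\subsetneq A$ and ${\rm rad}(B)\subsetneq B$. Combined with ${\rm rad}(M)={\rm rad}(A)\op{\rm rad}(B)$, this gives $M/{\rm rad}(M)=A/{\rm rad}(A)\op B/{\rm rad}(B)$, a module of length at least two, contradicting simplicity.

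The main technical input is the centrality of $f(x^s)$, which is what permits \leref{4.16} to govern the entire radical filtration at once; once this is recognized, the rest of the argument is essentially bookkeeping. The only other subtle point is the non-vanishing of ${\rm rad}^{r-1}(M)$, which rests on the freeness of $M(\l)$ as a $k[x^s]$-module. I do not anticipate any serious obstacle beyond making these observations precise.
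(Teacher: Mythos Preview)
Your proof is correct and follows essentially the same approach as the paper: both use \leref{4.16} to identify ${\rm rad}(M)$ with $f(x^s)\cdot M$, compute the top $M/{\rm rad}(M)\cong V(\l,f)$ as simple via \prref{3.6}(b), and deduce the radical length from $f(x^s)^r\cdot M=0$ together with $f(x^s)^{r-1}\cdot M\neq 0$. You spell out in more detail the inductive identification ${\rm rad}^i(M)=f(x^s)^i\cdot M$, the freeness argument for non-vanishing of $f(x^s)^{r-1}\cdot M$, and the passage from simple top to indecomposability, all of which the paper leaves implicit.
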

\begin{proof}
Since $f(x^s)^r\cdot V(\l, f^r)=0$, $V(\l, f^r)=V(\l, f^r)^{(f)}$. It follows from \leref{4.16}
that ${\rm rad}(V(\l, f^r)=f(x^s)\cdot V(\l, f^r)$. However,
$$f(x^s)\cdot V(\l, f^r)=f(x^s)\cdot(M(\l)/N_{f^r}(\l))=(f(x^s)\cdot M(\l))/N_{f^r}(\l)=N_{f}(\l)/N_{f^r}(\l).$$
Hence we have
$$V(\l, f^r)/{\rm rad}(V(\l, f^r))=(M(\l)/N_{f^r}(\l))/(N_{f}(\l)/N_{f^r}(\l))\cong M(\l)/N_{f}(\l)=V(\l, f),$$
which is simple by \prref{3.6}(b). It follows that $V(\l, f^r)$ is indecomposable.
Now we have $f(x^s)^r\cdot V(\l, f^r)=0$ and $f(x^s)^{r-1}\cdot V(\l, f^r)\neq0$. It follows from \leref{4.16}
that ${\rm rl}(V(\l, f^r)=r$. This completes the proof.
\end{proof}

\begin{theorem}\thlabel{4.18}
Assume that $|\chi|=s<\oo$. Let $M\in\mathcal W$ be indecomposable.
Then $M\cong V_t(\l)$ for some $t\>1$ and $\l\in\widehat{G}$,
or $M\cong V(\l, f^r)$ for some $\l\in\widehat{G}$, $r\>1$ and $f(y)\in{\rm Irr}[y]$.
Moreover, $M$ is uniserial.
\end{theorem}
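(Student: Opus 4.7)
By Corollary 4.12, since $M$ is indecomposable there exists a monic irreducible $f(y)\in k[y]$ with $M=M^{(f)}$, and finite dimensionality yields a uniform $r$ such that $f(x^s)^r$ annihilates $M$. The plan is to split according to whether $f(y)=y$ or $f(y)\in{\rm Irr}[y]$.

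In the case $f(y)=y$, I would argue that every composition factor of $M$ must be one dimensional. Indeed, $x^{sr}\cdot M=0$ implies $x^{sr}\cdot N=0$ for every composition factor $N$; but by Theorem 3.9(b) a simple weight module of dimension greater than one is isomorphic to some $V(\mu,g)$ with $g\in{\rm Irr}[y]$, on which $g(x^s)$ acts as zero, and since $g\neq y$ is irreducible the polynomials $g(y)$ and $y^r$ are coprime, forcing $V(\mu,g)=0$. Theorem 4.8 then yields $M\cong V_t(\l)$ for some $t\geqslant 1$ and $\l\in\widehat G$, which is uniserial by Proposition 4.3(b).

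In the case $f(y)\in{\rm Irr}[y]$, Lemma 4.15 fixes $\l\in\widehat G$ so that every composition factor of $M$ is isomorphic to $V(\l,f)$. I would define $\phi:M\to M$ by $\phi(m)=f(x^s)\cdot m$ and apply Lemma 4.1. Centrality of $f(x^s)$ makes $\phi$ an $H$-module map, so $\phi^{-1}(N)$ is always a submodule. Every submodule $N$ of $M$ inherits $N=N^{(f)}$, and Lemma 4.16 supplies $\phi(N)=f(x^s)\cdot N={\rm rad}(N)$. The kernel ${\rm Ker}(\phi)$ is an $H$-submodule annihilated by $f(x^s)$, hence semisimple by the argument inside Lemma 4.14, so ${\rm Ker}(\phi)\subseteq{\rm soc}(M)$; the reverse inclusion holds because each simple submodule is isomorphic to $V(\l,f)$ and annihilated by $f(x^s)$. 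Lemma 4.1(c) then delivers that $M$ is uniserial with $M/{\rm rad}(M)\cong V(\l,f)$ (the trivial case ${\rm rl}(M)=1$ reduces to Lemma 4.14 directly).

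To identify $M$ with a specific $V(\l,f^r)$, set $t={\rm rl}(M)$. I would compose an epimorphism $M(\l)\to V(\l,f)\cong M/{\rm rad}(M)$ with a lift along $M\to M/{\rm rad}(M)$ provided by the projectivity of $M(\l)$ in $\mathcal W$ (Lemma 4.6); Nakayama makes this lift $g:M(\l)\to M$ surjective. Iterating ${\rm rad}^i(M)=f(x^s)^i\cdot M$ (Lemma 4.16) gives ${\rm rad}^t(M)=f(x^s)^t\cdot M=0$, so $N_{f^t}(\l)\subseteq{\rm Ker}(g)$ and $g$ descends to an epimorphism $\overline g:V(\l,f^t)\to M$. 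Since both modules have dimension $t\deg(f)\cdot s$ ($V(\l,f^t)$ by construction, $M$ from its uniserial filtration with $t$ copies of $V(\l,f)$), $\overline g$ is an isomorphism. The hardest step is the verification of the hypotheses of Lemma 4.1 for $\phi$, especially the identity ${\rm Ker}(\phi)={\rm soc}(M)$ and the fact that $\phi(N)={\rm rad}(N)$ holds for every submodule; everything after that is bookkeeping.
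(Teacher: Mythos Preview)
Your proof is correct and follows essentially the same route as the paper: split via Corollary~4.12 into $f(y)=y$ versus $f(y)\in{\rm Irr}[y]$, invoke Theorem~4.8 in the first case, and in the second verify the hypotheses of Lemma~4.1 for $\phi(m)=f(x^s)\cdot m$ (using Lemmas~4.14--4.16), conclude uniseriality, and then lift an epimorphism $M(\l)\to M/{\rm rad}(M)$ via Lemma~4.6 to identify $M$ with $V(\l,f^t)$. The only cosmetic differences are that in Case~1 the paper argues directly that $x\cdot V$ is a proper submodule of a simple factor $V$ (rather than your coprimality argument), and at the end the paper compares lengths ${\rm l}(V(\l,f^t))={\rm l}(M)$ rather than dimensions.
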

\begin{proof}
By \coref{4.12}, there exists a monic irreducible polynomial $f(y)\in k[y]$
such that $M=M^{(f)}$. Then $f(y)=y$ or $f(y)\neq y$.

Case 1: $f(y)=y$. Since $M$ is finite dimensional, $x^{rs}\cdot M=0$ for some $r\>1$.
%Hence there is an integer $t\>1$ such that $x^t\cdot M=0$ but $x^{t-1}\cdot M\neq 0$.
Let $V$ be a composition factor of $M$. Then $x^{rs}\cdot V=0$. Obviously, $x\cdot V$
is a submodule of $V$, and hence $x\cdot V=0$ since $V$ is simple and $x^{rs}\cdot V=0$.
This shows that $V$ is one dimensional. It follows from \thref{4.8} that $M\cong V_t(\l)$ for some
$t\>1$ and $\l\in\widehat{G}$. By \prref{4.3}(b), $M$ is uniserial in this case.

Case 2: $f(y)\neq y$. Since $M=M^{(f)}$,
by \leref{4.15} and its proof, there exists a $\l\in\widehat{G}$
such that each composition factor of $M$ is isomorphic to $V(\l, f)$ and
$\Pi(M)=\{\chi^i\l|0\<i\<s-1\}$. If ${\rm rl}(M)=1$, then $M$ is simple, and so $M\cong V(\l, f)$.
Now assume ${\rm rl}(M)=r>1$. Define a linear map $\phi: M\ra M$
by $\phi(m)=f(x^s)\cdot m$ for all $m\in M$. Then $\phi$ is a module endomorphism of $M$
since $f(x^s)$ is central in $H$. It follows from \leref{4.13}, its proof and \leref{4.14}
that ${\rm Ker}(\phi)={\rm soc}(M)$. By \leref{4.16}, $\phi(N)={\rm rad}(N)$ for
any submodule $N$ of $M$. It follows from \leref{4.1}(c) that $M$ is uniserial,
and hence $M/{\rm rad}(M)\cong V(\l, f)$. Moreover, $f(x^s)^r\cdot M=\phi^r(M)={\rm rad}^r(M)=0$.
Hence there is an $H$-module epimorphism $\varphi: M(\l)\ra M/{\rm rad}(M)$.
It follows from \leref{4.6} that there is
an $H$-module map $\psi: M(\l)\ra M$ such that $\pi\psi=\varphi$,
where $\pi: M\ra M/{\rm rad}(M)$ is the canonical $H$-module epimorphism.
Since $\varphi$ is surjective, $M={\rm Im}(\psi)+{\rm rad}(M)$.
Hence $M={\rm Im}(\psi)$, and so $\psi$ is surjective.
Now we have $\psi(N_{f^r}(\l))=\psi(f(x^s)^r\cdot M(\l))=f(x^s)^r\cdot\psi(M(\l))=f(x^s)^r\cdot M=0$.
Hence $\psi$ induces an $H$-module epimorphism $\ol{\psi}: V(\l, f^r)\ra M$.
By \prref{4.17} and its proof, $V(\l, f^r)$ is indecomposable with ${\rm rl}(V(\l, f^r))=r$,
and $V(\l, f^r)=V(\l, f^r)^{(f)}$. Hence from the above discussion, one knows that
$V(\l, f^r)$ is uniserial. Hence ${\rm l}(V(\l, f^r))={\rm rl}(V(\l, f^r))=r={\rm rl}(M)={\rm l}(M)$,
which implies that $\ol{\psi}$ is an $H$-module isomorphism.
\end{proof}

\begin{corollary}\colabel{4.19}
Assume that $|\chi|=s<\oo$. Then
$$\left\{V_t(\l), V(\s, f^t)\left| \l\in\widehat{G}, [\s]\in\widehat{G}/\langle\chi\rangle,
t\>1, f(y)\in{\rm Irr}[y]\right.\right\}$$
is a representative set of isomorphic classes of finite dimensional
indecomposable objects in $\mathcal W$.
\end{corollary}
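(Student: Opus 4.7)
The plan is to combine \thref{4.18} with the isomorphism criteria proved earlier in the section. By \thref{4.18} every finite dimensional indecomposable object in $\mathcal W$ is already known to be isomorphic to a module of one of the two listed forms $V_t(\l)$ or $V(\l, f^r)$, and \prref{4.3}(b) together with \prref{4.17} guarantees that each such module is indecomposable. So exhaustion and indecomposability are in place, and the work reduces to showing the parametrisation is exact, that is, that no two modules in the list are isomorphic unless they appear under the same label.

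I would split the redundancy check into three pieces. Between the two families: from the filtration exhibited in the proof of \prref{4.3}(b), every composition factor of $V_t(\l)$ is one dimensional, whereas the argument in \leref{4.15} shows that every composition factor of $V(\s, f^r)$ is isomorphic to the simple module $V(\s, f)$ of dimension $s\cdot\deg(f)$, which is at least $s\>2$ since $\chi^{-1}(a)\neq 1$ forces $|\chi|=s\>2$; hence no $V_t(\l)$ can be isomorphic to any $V(\s, f^r)$. Within the first family, \prref{4.4} directly yields $V_t(\l)\cong V_l(\s)$ if and only if $t=l$ and $\l=\s$. Within the second family, I would invoke \leref{3.7}: every $f\in{\rm Irr}[y]$ is monic irreducible with $f\neq y$, so its constant term is nonzero and therefore so is the constant term of $f^r$; \leref{3.7}(a) then gives $V(\l_1, f_1^{r_1})\cong V(\l_2, f_2^{r_2})$ if and only if $[\l_1]=[\l_2]$ and $f_1^{r_1}=f_2^{r_2}$, and unique factorisation in $k[y]$ collapses the second condition to $f_1=f_2$ and $r_1=r_2$, so that the triple $([\s], f, r)$ is determined.

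No step is expected to present a genuine obstacle. The mildly subtle point is recognising that a dimensional comparison of composition factors cleanly separates the two families; once one records that $s\>2$ (which follows from the standing hypothesis $\chi^{-1}(a)\neq 1$), this separation is immediate, and the remainder is a direct book-keeping application of results already established in the section.
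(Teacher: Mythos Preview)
Your proposal is correct and follows essentially the same route as the paper: exhaustion and indecomposability via \thref{4.18}, \prref{4.3}(b), \prref{4.17}, non-redundancy within families via \prref{4.4} and \leref{3.7} combined with unique factorisation in $k[y]$. The paper's proof is terser and does not spell out the separation between the two families (it simply lists \leref{3.7}, Propositions~4.3, 4.4, 4.17 and \thref{4.18}), whereas you make this explicit via the composition-factor dimension count; this is a harmless elaboration rather than a different argument.
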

\begin{proof}
For $f_1(y), f_2(y)\in{\rm Irr}[y]$ and integers $t, r\>1$, one knows that $f_1(y)^t=f_2(y)^r$
if and only if $f_1(y)=f_2(y)$ and $t=r$. Hence the corollary follows from \leref{3.7},
Propositions 4.3, 4.4 and 4.17, %\prref{4.2}, \prref{4.3}, \prref{4.17}
and \thref{4.18}.
\end{proof}

When $f(y)=(y-\b)^n$ with $n\>1$ and $\b\in k$, we denote $V(\l, f)$ by $V_n(\l, \b)$,
where $\l\in\widehat{G}$. If $k$ is algebraically closed, then each monic irreducible polynomial in $k[y]$
has the form $y-\b$, $\b\in k$. Thus, from \coref{4.19}, we have the following corollary.

\begin{corollary}\colabel{4.20}
Assume that $|\chi|=s<\oo$ and $k$ is algebraically closed. Then
$$\left\{V_t(\l), V_t(\s, \b)\left| \l\in\widehat{G}, [\s]\in\widehat{G}/\langle\chi\rangle,
t\>1, \b\in k^{\times}\right.\right\}$$
is a representative set of isomorphic classes of finite dimensional
indecomposable objects in $\mathcal W$.
\end{corollary}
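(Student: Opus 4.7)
The plan is to derive the corollary directly from \coref{4.19} by specializing the set ${\rm Irr}[y]$ to the algebraically closed setting. First I would invoke \coref{4.19}, which already provides a representative set of isomorphism classes of finite dimensional indecomposable objects in $\mathcal{W}$ of the form $\{V_t(\l), V(\s, f^t)\}$, where $\l \in \widehat{G}$, $[\s] \in \widehat{G}/\langle\chi\rangle$, $t \> 1$, and $f(y) \in {\rm Irr}[y]$. Thus the role of this corollary is simply to translate the indexing set ${\rm Irr}[y]$ into a concrete parameter when $k$ is algebraically closed.

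Since $k$ is algebraically closed, every monic irreducible polynomial in $k[y]$ has degree one, so is of the form $y - \b$ with $\b \in k$. Recalling that ${\rm Irr}[y]$ was defined to consist of those monic irreducible polynomials different from $y$ itself, we obtain the identification ${\rm Irr}[y] = \{y - \b \mid \b \in k^{\times}\}$. For such $f(y) = y - \b$ and any $t \> 1$, we have $f(y)^t = (y-\b)^t$, so by the notational convention introduced immediately before the corollary statement, $V(\s, f^t) = V_t(\s, \b)$. Substituting this into the list from \coref{4.19} produces exactly the family $\{V_t(\l), V_t(\s, \b)\}$ as claimed.

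The only thing left to check is that the new parameterization is non-redundant, i.e. that distinct triples $(\l, t)$ and $(t, [\s], \b)$ really give non-isomorphic modules. This follows because $(y-\b_1)^{t_1} = (y-\b_2)^{t_2}$ in $k[y]$ forces $t_1 = t_2$ and $\b_1 = \b_2$, so the non-redundancy assertion in \coref{4.19} transports to the non-redundancy of the parameters $(t, [\s], \b)$. I do not expect any genuine obstacle here, since the content is entirely a matter of translating the indexing of \coref{4.19} through the algebraic closure of $k$; the substantive representation-theoretic work has already been carried out in \thref{4.18} and \coref{4.19}.
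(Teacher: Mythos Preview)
Your proposal is correct and follows exactly the paper's approach: the paper simply observes that over an algebraically closed field every monic irreducible polynomial has the form $y-\b$, identifies ${\rm Irr}[y]$ with $\{y-\b\mid \b\in k^{\times}\}$, and then reads off the result from \coref{4.19}. Your additional remark on non-redundancy is a harmless elaboration of what is already implicit in \coref{4.19}.
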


Now we consider the finite dimensional indecomposable weight modules over $H'$.
Note that $H'=H/I$, where $I$ is the Hopf ideal of $H$ as stated in \thref{2.7}.
From the paragraph before \coref{3.15}, if $\chi^{-1}(a)$ is not a primitive $n$-th root of unity for any
$n\>2$, then $I=0$ and $|\chi|=\infty$. In this case, $H'=H$, and the finite dimensional
indecomposable weight modules over $H'$ are exactly those over $H$ as displayed in \coref{4.9}.
If $\chi^{-1}(a)$ is a primitive $n$-th root of unity for some
$n\>2$, then $I=\langle x^n\rangle$,
or $I=\langle x^n-\b(1-a^n)\rangle$ with $\b\neq 0$, $a^n\neq1$ and $|\chi|=n$.

Now assume that $\chi^{-1}(a)$ is a primitive $n$-th root of unity,
where $n\>2$. Then $n\<|\chi|$ by \prref{2.8}(a).

For any $\l\in \widehat{G}$, let $\overline{M(\l)}:=M(\l)/(I\cdot M(\l))$.
Then $\overline{M(\l)}$ is a weight $H'$-module. For $m\in M(\l)$, let $\overline{m}$
denote the image of $m$ under the canonical epimorphism $\pi: M(\l)\ra\overline{M(\l)}$.

Case 1: $I=\langle x^n\rangle$. In this case, $\overline{M(\l)}\cong V_n(\l)$ for any $\l\in\widehat{G}$.

Case 2: $I=\langle x^n-\b(1-a^n)\rangle$ with $\b\in k^{\times}$, $a^n\neq 1$ and $n=|\chi|$.
In this case, one can easily check that $\ol{M(\l)}\cong V(\l, \b(1-\l(a)^n))$.
If $\l(a)^n\neq 1$ then $\ol{M(\l)}$ is simple; if $\l(a)^n=1$ then $\ol{M(\l)}\cong V_n(\l)$.
It follows from \prref{4.3}(b) that $\overline{M(\l)}$ is uniserial and indecomposable.
Moreover, for $\l, \s\in\widehat{G}$, $\ol{M(\l)}\cong \ol{M(\s)}$ if and only if either $\l=\s$,
or $[\l]=[\s]$ and $\l(a)^n\neq 1$ ($\s(a)^n\neq 1$).

\begin{corollary}\colabel{4.21}
Let $\l\in\widehat{G}$. Then $\ol{M(\l)}$ is an
indecomposable projective object in $\mathcal W'$.
\end{corollary}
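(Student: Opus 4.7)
The plan is to handle indecomposability and projectivity separately, each via a short argument that reduces to results already established in the excerpt.

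For indecomposability, I would invoke the case analysis immediately preceding the corollary. In Case~1 (where $I=\langle x^n\rangle$) we have $\ol{M(\l)}\cong V_n(\l)$, which is indecomposable (in fact uniserial) by \prref{4.3}(b). In Case~2 (where $I=\langle x^n-\b(1-a^n)\rangle$ with $|\chi|=n$) the discussion before the corollary identifies $\ol{M(\l)}$ either with the simple module $V(\l,\b(1-\l(a)^n))$ when $\l(a)^n\neq 1$, or with $V_n(\l)$ when $\l(a)^n=1$; in both subcases $\ol{M(\l)}$ is uniserial and hence indecomposable.

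For projectivity in $\mathcal W'$, the plan is to lift along the quotient $\pi:M(\l)\ra\ol{M(\l)}$ and then factor through $\ol{M(\l)}$ by using the defining property of $\mathcal W'$. Concretely, suppose $f:M\ra L$ is an epimorphism and $g:\ol{M(\l)}\ra L$ is a morphism in $\mathcal W'$. Then $g\pi:M(\l)\ra L$ and $f$ are morphisms in $\mathcal W$ (since $\mathcal W'$ is a full subcategory of $\mathcal W$ by \prref{3.1}). By \leref{4.6}, $M(\l)$ is projective in $\mathcal W$, so there exists a morphism $\phi:M(\l)\ra M$ in $\mathcal W$ with $f\phi=g\pi$. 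Since $M$ lies in $\mathcal W'$, we have $I\cdot M=0$, and hence
\[
\phi(I\cdot M(\l))=I\cdot\phi(M(\l))\ss I\cdot M=0.
\]
Thus $\phi$ vanishes on $I\cdot M(\l)=\mathrm{Ker}(\pi)$ and induces a morphism $\ol{\phi}:\ol{M(\l)}\ra M$ in $\mathcal W'$. A direct check gives $f\ol{\phi}\circ\pi=f\phi=g\pi$, and since $\pi$ is surjective we conclude $f\ol{\phi}=g$, which is the required lifting.

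There is no serious obstacle here; the only point requiring a little care is the observation that $\phi$ automatically factors through $\ol{M(\l)}$, which rests on the identification $\mathrm{Ker}(\pi)=I\cdot M(\l)$ and on the fact that objects of $\mathcal W'$ are precisely those weight modules annihilated by $I$ (\prref{3.1}).
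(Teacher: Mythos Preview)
Your proposal is correct and follows essentially the same route as the paper: indecomposability is read off from the preceding case analysis, and projectivity is obtained by lifting $g\pi$ along $f$ via \leref{4.6} and then factoring through $\ol{M(\l)}$ using $I\cdot M=0$. The paper's proof is slightly terser on the indecomposability part (it simply cites ``the above discussion''), but the projectivity argument matches yours line for line.
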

\begin{proof}
Let $\l\in\widehat{G}$. Then $\overline{M(\l)}$ is an indecomposable
weight $H'$-module by the above discussion. Now let $f: M\ra L$ be an epimorphism
in $\mathcal W'$, and $g: \ol{M(\l)}\ra L$ a morphism in $\mathcal W'$.
Then $f$ and $g$ can be regarded as morphisms in $\mathcal W$. Hence by \leref{4.6},
there exists an $H$-module map $\phi: M(\l)\ra M$ such that $f\phi=g\pi$. Since $I\cdot M=0$,
we have $\phi(I\cdot M(\l))=I\cdot\phi(M(\l))=0$. Hence there is a unique $H$-module map
$\psi: \ol{M(\l)}\ra M$ such that $\psi\pi=\phi$. Thus, $f\psi\pi=f\phi=g\pi$, and so $f\psi=g$
since $\pi$ is surjective. Note that $\psi$ is also a morphism in $\mathcal W'$.
This completes the proof.
\end{proof}

\begin{proposition}\prlabel{4.22}
Assume that $\chi^{-1}(a)$ is a primitive $n$-th root of unity,
where $n\>2$.\\
{\rm (a)} If $I=\langle x^n\rangle$, then $\{V_t(\l)|\l\in\widehat{G}, 1\<t\<n\}$ is a representative set of
isomorphic classes of finite dimensional indecomposable objects in $\mathcal W'$.
Moreover, $V_n(\l)$ is a projective object in $\mathcal W'$ for all $\l\in\widehat{G}$.\\
{\rm (b)} If $I=\langle x^n-\b(1-a^n)\rangle$ with $\b\in k^{\times}$, $a^n\neq 1$ and $n=|\chi|$,
then
$$\left\{V_t(\l), V(\s, \b(1-\s(a)^n))\left|
\begin{array}{l}
1\<t\<n, \l\in\widehat{G} \mbox{ with }\l(a)^n=1,\\
{[\s]}\in \widehat{G}/\langle\chi\rangle \mbox{ with }\s(a)^n\neq 1\\
\end{array}
\right.\right\}$$
is a representative set of isomorphic classes of finite dimensional indecomposable
objects in $\mathcal W'$. Moreover, $V(\l, \b(1-\l(a)^n))$ is a projective object
in $\mathcal W'$ for any $\l\in\widehat{G}$.
\end{proposition}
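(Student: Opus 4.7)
By \prref{3.1}, a finite-dimensional weight $H$-module $M$ is an object of $\mathcal{W}'$ if and only if $I\cdot M=0$, so in both parts it suffices to scan the classification of finite-dimensional indecomposable objects of $\mathcal{W}$ furnished by Corollaries 4.9 and 4.19 and retain those modules annihilated by $I$. The projectivity statements will then be immediate from \coref{4.21} once we identify the covering module $\overline{M(\l)}=M(\l)/(I\cdot M(\l))$: in Case 1 of the text preceding \coref{4.21} one has $I\cdot M(\l)=x^n\cdot M(\l)=J_n(\l)$, so $\overline{M(\l)}\cong V_n(\l)$; in Case 2 one has $I\cdot M(\l)=(x^n-\b(1-a^n))\cdot M(\l)=N_{y-\b(1-\l(a)^n)}(\l)$, so $\overline{M(\l)}\cong V(\l,\b(1-\l(a)^n))$.

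For part (a), the condition becomes $x^n\cdot M=0$. On $V_t(\l)$ the element $x$ acts as the shift $m_i\mapsto m_{i+1}$ (with $m_t=0$), so $x^n\cdot V_t(\l)=0$ holds exactly when $t\<n$. On any module $V(\s,f^r)$ with $f(y)\in{\rm Irr}[y]$ one has $f(y)\neq y$, hence $f(y)^r$ has nonzero constant term, and the argument used in \prref{3.6}(b) shows that $x$ acts injectively on $V(\s,f^r)$; in particular $x^n$ does not annihilate it. Combining this with \coref{4.9} when $|\chi|=\oo$ and \coref{4.19} when $|\chi|<\oo$ leaves precisely the family $\{V_t(\l)\mid 1\<t\<n,\ \l\in\widehat{G}\}$, and projectivity of $V_n(\l)$ follows from \coref{4.21}.

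For part (b) we are in the case $|\chi|=n$ and must impose $(x^n-\b(1-a^n))\cdot M=0$. Since $\chi(a)^n=1$, the element $a^n$ acts on any weight vector $v$ of weight $\mu$ by the scalar $\mu(a)^n$, and this scalar is constant on the coset $[\mu]\in\widehat{G}/\langle\chi\rangle$. For $V_t(\l)$ the shift computation above together with the required identity forces simultaneously $t\<n$ (so $x^n$ kills every basis vector) and $\b(1-\l(a)^n)=0$, i.e.\ $\l(a)^n=1$. For $V(\s,f^r)$ the element $x^n=x^s$ has minimal polynomial $f(y)^r$, so the same requirement forces $f(y)^r=y-\b(1-\s(a)^n)$; this pins down $r=1$ and $f(y)=y-\b(1-\s(a)^n)$, which lies in ${\rm Irr}[y]$ precisely when $\s(a)^n\neq 1$. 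Hence the surviving simple summand is $V(\s,\b(1-\s(a)^n))$, and the fact that it is parameterized by $[\s]$ rather than $\s$ follows from \coref{3.8} together with the observation that $\b(1-\s(a)^n)$ itself is constant on $[\s]$. Projectivity of $V(\l,\b(1-\l(a)^n))$ again follows from \coref{4.21}.

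The main obstacle is ruling out the blocks $V(\s,f^r)$: in (a) we need to see that no power of $x$ annihilates such a module, and in (b) we need to see that $f(y)^r$ can only agree with the degree-one polynomial $y-\b(1-\s(a)^n)$ when $r=1$. Both reductions rest on the non-vanishing constant term of $f^r$ (granted by $f(y)\in{\rm Irr}[y]$ together with $f(y)\neq y$), after which pairwise non-isomorphism of the listed modules is read off from Propositions 4.3, 4.4 and \coref{3.8}.
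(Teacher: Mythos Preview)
Your argument is correct and follows essentially the same route as the paper: reduce to the classification of finite-dimensional indecomposables in $\mathcal W$ (Corollaries~4.9 and~4.19), test which survive the annihilation condition $I\cdot M=0$, and read off projectivity from \coref{4.21} via the identification of $\overline{M(\l)}$. The only notable difference is in part~(b) for the blocks $V(\s,f^r)$: the paper observes that $x$ acts injectively, derives $(x^n-\b(1-\s(a)^n))\cdot M=0$, and then invokes \leref{4.14} to force $M\cong V(\s,y-\b(1-\s(a)^n))$, whereas you argue directly that the minimal polynomial of $x^s$ on $V(\s,f^r)$ is $f(y)^r$ and hence must divide the linear polynomial $y-\b(1-\s(a)^n)$. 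Your route is slightly more economical, but you should state why the minimal polynomial is exactly $f(y)^r$ and not a lower power: this is because $M(\s)$ is free of rank one over $k[x]$ (hence over $k[x^s]$), so $f(x^s)^{r-1}\cdot v_\s\notin f(x^s)^r\cdot M(\s)=N_{f^r}(\s)$.
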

\begin{proof}
(a) For any $\l\in\widehat{G}$ and $t\>1$, $x^n\cdot V_t(\l)=0$ if and only if $n\>t$.
Hence by \prref{4.3}, $V_t(\l)$ is an indecomposable $H'$-module for any $\l\in\widehat{G}$ and $1\<t\<n$.
Conversely, let $M$ be a finite dimensional indecomposable weight $H'$-module.
Then $M$ is an indecomposable weight $H$-module and $x^n\cdot M=0$.
It follows from the proof of \thref{4.18} that $M\cong V_t(\l)$ for some $\l\in\widehat{G}$
and $t\>1$. Then by the above discussion, $t\<n$. Thus, the first statement follows
from \prref{4.4}. The second statement follows from \coref{4.21} and the discussion
before \coref{4.21}.

(b) For any $\l\in\widehat{G}$ and $t\>1$, one can easily check that
$(x^n-\b(1-a^n))\cdot V_t(\l)=0$
if and only if $t\<n$ and $\l(a)^n=1$.
Hence by \prref{4.3}, $V_t(\l)$ is an indecomposable $H'$-module for any $1\<t\<n$ and
$\l\in\widehat{G}$ with $\l(a)^n=1$. By \coref{3.16}(b),
$V(\s, \b(1-\s(a)^n))$ is an indecomposable weight $H'$-module for any
$[\s]\in \widehat{G}/\langle\chi\rangle$  with $\s(a)^n\neq 1$.
Conversely, let $M$ be a finite dimensional indecomposable weight $H'$-module.
Then $M$ is an indecomposable weight $H$-module and $(x^n-\b(1-a^n))\cdot M=0$.
By \thref{4.18}, $M\cong V_t(\l)$ for some $\l\in\widehat{G}$ and $t\>1$, or
$M\cong V(\s, f^r)$ for some $[\s]\in \widehat{G}/\langle\chi\rangle$, $f(y)\in{\rm Irr}[y]$
and $r\>1$. If $M\cong V_t(\l)$ then $t\<n$ and $\l(a)^n=1$ as shown above.
Now assume that $M\cong V(\s, f^r)$. Then by \leref{4.15}, each composition factor of $M$
is isomorphic to $V(\s, f)$ since $\Pi(M)=\Pi(V(\s, f^r))=\{\chi^i\s|0\<i\<n-1\}$.
From the proof of \prref{3.6}, one also knows that $x\cdot m\neq0$ for any $0\neq m\in M$.
Hence $x^n\cdot M\neq 0$.
For any $v\in V(\s, f^r)_{(\chi^i\s)}$, we have
$a^n\cdot v=\chi(a)^{in}\s(a)^nv=\s(a)^nv$, where $0\<i\<n-1$.
Hence $(x^n-\b(1-a^n))\cdot v=(x^n-\b(1-\s(a)^n))\cdot v$ for all $v\in V(\s,f^r)$.
Thus, $(x^n-\b(1-\s(a)^n))\cdot M=0$. If $\s(a)^n=1$, then $x^n\cdot M=0$,
which is impossible. Hence $\s(a)^n\neq 1$, and then $f_1(y)=y-\b(1-\s(a)^n)\in{\rm Irr}[y]$, and
$f_1(x^n)\cdot M=0$. By \leref{4.14} and its proof, $M\cong V(\s, f_1)\cong V(\s, \b(1-\s(a)^n))$.
Thus, the first statement follows from the proof of \coref{3.16}(b) and \prref{4.4}.
The second statement follows from \coref{4.21} and the discussion before \coref{4.21}.
\end{proof}

For any $\l\in\widehat{G}$ and $1\<t<n$, there exists a module epimorphism $V_n(\l)\ra V_t(\l)$,
which is not split. Thus, by \prref{4.22} and its proof, we have the following corollary.

\begin{corollary}\lelabel{4.23}
Assume that $\chi^{-1}(a)$ is a primitive $n$-th root of unity,
where $n\>2$.\\
{\rm (a)} If $I=\langle x^n\rangle$, then $\{V_n(\l)|\l\in\widehat{G}\}$ is a representative set of
isomorphic classes of finite dimensional indecomposable projective objects in $\mathcal W'$.\\
{\rm (b)} If $I=\langle x^n-\b(1-a^n)\rangle$ with $\b\in k^{\times}$, $a^n\neq 1$ and $n=|\chi|$,
then
$$\left\{V_n(\l), V(\s, \b(1-\s(a)^n))\left|\l\in\widehat{G} \text{ with }
\l(a)^n=1, {[\s]}\in \widehat{G}/\langle\chi\rangle \text{ with }\s(a)^n\neq 1\right.\right\}$$
is a representative set of isomorphic classes of finite dimensional indecomposable projective
objects in $\mathcal W'$.
\end{corollary}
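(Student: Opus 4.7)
The plan is to leverage Proposition 4.22 together with the non-split epimorphism $V_n(\l)\to V_t(\l)$ mentioned just before the corollary. By Proposition 4.22, the complete list of finite dimensional indecomposables in $\mathcal{W}'$ is already known: in case (a) it is $\{V_t(\l):1\< t\< n,\ \l\in\widehat{G}\}$, and in case (b) it is $\{V_t(\l):1\< t\< n,\ \l(a)^n=1\}$ together with the simple modules $V(\s,\b(1-\s(a)^n))$ with $\s(a)^n\neq 1$. Moreover, that proposition already shows that the modules appearing in the claimed representative sets of Corollary 4.23 are projective in $\mathcal{W}'$ (note that in case (b) when $\l(a)^n=1$ the module $V(\l,\b(1-\l(a)^n))$ coincides with $V_n(\l)$). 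So the only thing left is to rule out projectivity for the remaining indecomposables, namely the $V_t(\l)$ with $1\< t<n$ (subject to $\l(a)^n=1$ in case (b)).

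For this, I would make the claimed epimorphism explicit: using the bases of $V_n(\l)$ and $V_t(\l)$ described before Proposition 4.2, the map sending $m_i\mapsto m_i$ for $0\< i\< t-1$ and $m_i\mapsto 0$ for $t\< i\< n-1$ is the canonical quotient $V_n(\l)=M(\l)/J_n(\l)\twoheadrightarrow M(\l)/J_t(\l)=V_t(\l)$, well-defined since $J_n(\l)\ss J_t(\l)$. A quick check confirms this is a morphism in $\mathcal{W}'$: both $V_n(\l)$ and $V_t(\l)$ satisfy $I\cdot M=0$ in case (a) because $x^n$ kills both, and in case (b) because $x^n$ kills both and $\l(a)^n=1$ forces $(1-a^n)$ to act as zero as well. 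The epimorphism is not split because $V_n(\l)$ is indecomposable (Proposition 4.3(b)) while a splitting would realize $V_t(\l)$ as a proper nonzero direct summand of $V_n(\l)$. Hence if $V_t(\l)$ were projective in $\mathcal{W}'$, the identity on $V_t(\l)$ would lift through this epimorphism and produce precisely such a splitting, a contradiction. Therefore $V_t(\l)$ is not projective for $t<n$.

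Combining these steps, the modules listed in the statement are exactly the finite dimensional indecomposable projectives in $\mathcal{W}'$, and pairwise non-isomorphism of the distinct representatives follows from Propositions 4.4 and 4.22 (together with Corollary 3.8 for part (b)). I do not foresee a real obstacle here; the only subtle point is checking that the quotient map $V_n(\l)\to V_t(\l)$ actually lives in $\mathcal{W}'$, i.e.\ that the ideal $I$ annihilates both modules, which is why the hypothesis $\l(a)^n=1$ is exactly the one that appears in the list of indecomposables in case (b).
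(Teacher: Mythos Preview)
Your proposal is correct and follows essentially the same approach as the paper: the paper simply observes that for $1\leqslant t<n$ there is a non-split epimorphism $V_n(\l)\to V_t(\l)$ and then invokes Proposition~4.22 and its proof. You have just made each step explicit (the quotient map, the check that $I$ annihilates both modules, indecomposability of $V_n(\l)$ forcing non-splitness, and the non-isomorphism references), which is a faithful expansion of the paper's one-line argument.
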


\begin{remark}\relabel{4.24}
Assume that $\chi^{-1}(a)$ is a primitive $n$-th root of unity,
where $n\>2$. Then from the above discussion, the projective covers of simple objects
in $\mathcal W'$  can be described as follows: If $I=\langle x^n\rangle$,
then $V_n(\l)$ is a projective cover of $V_{\l}$ in $\mathcal W'$ for any $\l\in\widehat{G}$.
If $I=\langle x^n-\b(1-a^n)\rangle$ with $\b\in k^{\times}$, $a^n\neq 1$ and $n=|\chi|$,
then $V_n(\l)$ is a projective cover of $V_{\l}$ in $\mathcal W'$ for any $\l\in\widehat{G}$ with $\l(a)^n=1$,
and $V(\s, \b(1-\s(a)^n))$ is its own projective cover in $\mathcal W'$
for any $[\s]\in \widehat{G}/\langle\chi\rangle$ with $\s(a)^n\neq 1$.
\end{remark}

\section*{Acknowledgments}
The authors thank the referee for the valuable comments.
This work is supported by NSF of China, No.11171291, and Doctorate united foundation,
No. 20123250110005, of Ministry of China and Jiangsu Province, and also supported by
Yancheng Institute of Technology Research Program, No. XKR2011022.

\end{document}